\let\origdoublepage\cleardoublepage
\newcommand{\clearemptydoublepage}{%
  \clearpage{\pagestyle{empty}\origdoublepage}}
\let\cleardoublepage\clearemptydoublepage
\title{Optimal Bounds for Tyler's M-Estimator for Elliptical Distributions}
\author{Lap Chi Lau\footnote{lapchi@uwaterloo.ca, University of Waterloo}, Akshay Ramachandran\footnote{aramach@cs.ubc.ca, University of British Columbia}}
\date{May 2025}
\begin{document}

\maketitle

\begin{abstract}%
A fundamental problem in statistics is estimating the shape matrix of an Elliptical distribution. 
This generalizes the familiar problem of Gaussian covariance estimation, for which the sample covariance achieves optimal estimation error.
For Elliptical distributions, Tyler proposed a natural M-estimator and showed strong statistical properties in the asymptotic regime, independent of the underlying distribution.
Numerical experiments show that this estimator performs very well, and that Tyler's iterative procedure converges quickly to the estimator. 
Franks and Moitra recently provided the first distribution-free error bounds in the finite sample setting, as well as the first rigorous convergence analysis of Tyler's iterative procedure. 
However, their results exceed the sample complexity of the Gaussian setting by a $\log^{2} d$ factor. 
We close this gap by proving optimal sample threshold and error bounds for Tyler’s M-estimator for all Elliptical distributions, fully matching the Gaussian result. Moreover, we recover the algorithmic convergence even at this lower sample threshold.
Our approach builds on the operator scaling connection of Franks and Moitra by introducing a novel `pseudorandom' condition, which we call $\infty$-expansion. We show that Elliptical distributions satisfy $\infty$-expansion at the optimal sample threshold, and then prove a novel scaling result for inputs satisfying this condition.
% Combining these ingredients yields optimal bounds for the estimator.  %
\end{abstract}

\newpage

\section{Introduction} \label{s:intro}

%     \begin{itemize}

%     \item For the important special case of Gaussian, the sample covariance achieves improved error guarantees that are better than the result of \cite{FM20} by $\log$ factors. 
%     \item It is also well-known that these guarantees are tight up to constant factors.
%     \item It is therefore reasonable to ask whether the M-estimator is genuinely worse, or whether the error guarantees of Tyler's M-Estimator can be improved to match the Gaussian setting for the more general Elliptical distribution setting. 
%     \item In this work, we show that this is indeed the case, giving optimal sample complexity and error guarantees for Tyler's M-Estimator for the whole class of elliptical distributions. 
%     \item We also recover their algorithmic analysis, showing the same convergence of Tyler's iterative procedure in our optimal lower sample regime. 

%     \item A key component of \cite{FM20} was to connect Tyler's M-Estimator to \emph{operator scaling}, which is a non-commutative generalization of the well-known matrix scaling problem that has recently attracted a great deal of interest. 
%     \item They then appeal to sophisticated convergence analyses for scaling problems to prove strong bounds on the estimator for Elliptical distributions. 

%     \item In this work, we also follow the connection to scaling, improving both the scaling analysis and the probabilistic analysis to prove optimal sample complexity bounds for Tyler's M-Estimator for Elliptical distributions. 
% \end{itemize}

    The covariance matrix of random variable is a natural and useful statistic of high dimensional distributions, as it gives insight into the geometry of the data.
    Estimation of the covariance is therefore a fundamental task in data analysis. 
    For sufficiently nice distributions, such as multivariate Gaussians, the sample covariance is a very accurate estimator that is easy to compute. 
    However in many practical situations, the underlying distribution is less well-behaved, so the sample covariance is less accurate. In fact, for sufficiently heavy-tailed distributions such as the multivariate $t$-distribution, the covariance matrix may not even exist. 

    Elliptical distributions are a well-studied model class of heavy tailed distributions (see \cite{kelker1970distribution}, \cite{gupta2013elliptically}), and include important special cases such as multivariate Gaussian and $t$-distributions. 
    While Elliptical distributions do not always have a well-defined covariance matrix, they are parameterized by a `shape matrix' which captures similar geometric properties. 
    Tyler \cite{tyler} proposed a natural estimator for the shape matrix of elliptical distributions, along with a simple iterative procedure to compute the estimator.
    He was able to prove strong asymptotic guarantees for this estimator, essentially recovering some of the desirable statistical properties of the Gaussian setting.
    Importantly, the asymptotic guarantees shown by Tyler \cite{tyler} are \emph{distribution-free}, in that they are independent of the underlying distribution or shape matrix.
    Both the estimator and the heuristic procedure have been shown to perform well in numerical experiments. 
    % TODO1: asymptotic consistency, asymptotic efficiency, robustness, distribution-free. 

    Following this seminal result, there have been attempts to show estimation guarantees for Elliptical distributions in the finite sample regime. 
    Soloveychik and Wiesel \cite{soloveychik2014performance} showed that Tyler's M-Estimator achieved optimal error in the Frobenius norm, but with an additional factor that depends on the condition number of the shape matrix.
    Regularized estimators have also been proposed, which can be computed efficiently but do not have the same provable statistical properties as Tyler's estimator. 
    For a thorough discussion of these results, see the survey of Wiesel and Zhang \cite{wiesel2015structured}. 
    
    The first distribution-free guarantees for Tyler's M-Estimator were proven recently by Franks and Moitra in \cite{FM20}; 
    % gave free sample complexity bounds for Tyler's M-Estimator; 
    they also gave the first rigorous analysis of Tyler's iterative procedure, showing linear convergence to the estimator.
    These results are nearly optimal, but have sample threshold and error results that exceed the Gaussian setting by $\log d$ factors. 
    % as they match the well-known lower bound for the special case of Gaussian covariance estimation up to $\log^{2} d$ factors. 
    % Note that these results are uniform over the entire class of Elliptical distributions. 
    It is natural to ask whether Tyler's estimator can be shown to match this optimal guarantee, or whether shape estimation for Elliptical distributions is strictly more difficult than Gaussian covariance estimation.
    %TODO2
    %     \item Following this there were many works trying to give finite sample guarantees for this estimator. There were also different estimators proposed that could be computed more efficiently, but did not inherit the desirable statistical properties of Tyler's M-Estimator. 
    % \item In an important recent work \cite{FM20}, Franks and Moitra gave a quantitative analysis of Tyler's M-Estimator for the finite sample setting that was nearly tight in terms of sample complexity; as well as the first rigorous analysis of Tyler's iterative procedure, in fact showing linear convergence. 

    %TODO3
    % The results of \cite{FM20} give strong error bounds for Tyler's M-Estimator for any Elliptical distribution. 
    % For sufficiently well-behaved distributions, such as multivariate Gaussians, there are simpler methods that give better guarantees. 
    % In fact, in this simpler setting, it is well-known that the sample covariance is the optimal estimator for Gaussian covariance up to constant factors. 
    % \item It is therefore reasonable to ask whether the M-estimator is genuinely worse, or whether the error guarantees of Tyler's M-Estimator can be improved to match the Gaussian setting for the more general Elliptical distribution setting. 

    In this work, we show optimal sample complexity and error guarantees for Tyler's M-Estimator for the shape matrix of Elliptical distributions. 
    These results are tight, as they match the known lower bounds for the special case of Gaussians. 
    % In particular, our bounds match the lower bound from the Gaussian setting. 
    We also recover the algorithmic analysis of \cite{FM20} with fewer samples, showing the same linear convergence of Tyler's iterative procedure at the optimal sample threshold.

    \subsection{Our Results}
    
    Our formal sample complexity result is as follows:

\begin{theorem} \label{t:mainSampleComplexity}
    Given $n \gtrsim \frac{d}{\eps^{2}}$ samples from an elliptical distribution with shape $\Sigma \in \PD(d)$, where $\eps$ is at most a small constant, Tyler's M-Estimator $\hat{\Sigma}$ satisfies 
    \[ \| I_{d} - \Sigma^{1/2} \hat{\Sigma}^{-1} \Sigma^{1/2}\|_{\op} \leq \eps  \]
    % \[ \| \log \Sigma^{-1/2} \hat{\Sigma} \Sigma^{-1/2}\|_{\op} \leq \eps  \]
    with probability $\geq 1 - \exp( - \Omega(\eps^{2} n) )$. 
\end{theorem}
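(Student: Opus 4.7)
The plan is to reduce to the isotropic case via equivariance, then combine a new pseudorandomness condition ($\infty$-expansion) with a novel deterministic scaling analysis to match the Gaussian sample complexity.

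For the \textbf{reduction}, Tyler's M-Estimator is equivariant under linear change of coordinates: the estimator for samples $A x_i$ is $A \hat{\Sigma} A^T$, and the quantity $\|I_d - \Sigma^{1/2} \hat{\Sigma}^{-1} \Sigma^{1/2}\|_{\op}$ is invariant under the substitution $x_i \mapsto \Sigma^{-1/2} x_i$, so we may assume $\Sigma = I_d$. Moreover, Tyler's estimator depends only on the unit vectors $y_i = x_i/\|x_i\|$, which for an isotropic elliptical distribution are distributed uniformly on $S^{d-1}$ regardless of the radial component; this is the source of distribution-freeness.

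For the \textbf{probabilistic step}, I would define $\infty$-expansion of a set of unit vectors $\{y_i\}_{i=1}^n$ at scale $\eps$ to mean that for every subspace $V \subseteq \mathbb{R}^d$,
\[
\Bigl| \frac{d}{n} \sum_i \|P_V y_i\|^2 - \dim V \Bigr| \leq \eps \cdot \dim V,
\]
and show that $n \gtrsim d/\eps^2$ uniform samples on $S^{d-1}$ satisfy this with probability $1 - \exp(-\Omega(\eps^2 n))$. For fixed $V$ of dimension $k$ this is a Bernstein bound (each $\|P_V y_i\|^2$ is subexponential with mean $k/d$), and the uniform statement over all $V$ follows from an $\eps$-net on the Grassmannian, whose metric entropy at scale $\eps$ is $O(dk \log(1/\eps))$ and is absorbed by $\eps^2 n \gtrsim d$.

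For the \textbf{deterministic step}, I would prove that if $\{y_i\}$ is $O(\eps)$-$\infty$-expanding then Tyler's M-Estimator $\hat{\Sigma}$ satisfies $\|I_d - \hat{\Sigma}^{-1}\|_{\op} \leq \eps$, which combined with the reduction finishes the theorem. Following the Franks-Moitra operator scaling viewpoint, $\hat{\Sigma}$ corresponds to a scaling matrix making the reweighted samples isotropic; I would then consider the extremal eigenvector of $I_d - \hat{\Sigma}^{-1}$, use the fixed-point equation to express its eigenvalue as a weighted subspace mass of the $y_i$, and close the loop by invoking $\infty$-expansion on the corresponding subspace. The hard part will be this last step: Tyler's estimator is defined by a non-linear fixed-point equation, and previous approaches in Franks-Moitra controlled spectral deviations at each dyadic scale separately and union-bounded, costing a $\log d$ factor. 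Avoiding this requires a single-shot argument that simultaneously controls all subspace masses --- precisely the role of $\infty$-expansion.
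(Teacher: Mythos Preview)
Your reduction step is correct and matches the paper exactly. The gap is in the pseudorandomness condition and, consequently, in the deterministic step.

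Your proposed ``$\infty$-expansion'' --- that $\bigl|\tfrac{d}{n}\sum_i \|P_V y_i\|^2 - \dim V\bigr| \le \eps\,\dim V$ for every subspace $V$ --- is equivalent to the operator-norm bound $\bigl\|\tfrac{d}{n}\sum_i y_i y_i^T - I_d\bigr\|_{\op} \le \eps$ (take $V$ to be the top or bottom eigenspace for one direction; the other is linearity of trace). So your condition is nothing more than $\eps$-near-isotropy of the unweighted sample, which is already the Vershynin concentration input the paper uses as a \emph{separate} ingredient. It carries no information beyond that, and in particular it says nothing about \emph{reweighted} sums $\sum_i w_i\, y_i y_i^T$.

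This is fatal for the deterministic step. Tyler's fixed-point equation reads $\hat\Sigma = \tfrac{d}{n}\sum_i \tfrac{y_i y_i^T}{y_i^T \hat\Sigma^{-1} y_i}$, so the object whose spectrum you must control is a weighted sum with weights $w_i = (y_i^T \hat\Sigma^{-1} y_i)^{-1}$ that depend on $\hat\Sigma$ itself. Your extremal-eigenvector argument would need to bound $\langle xx^T, \sum_i w_i\, y_i y_i^T\rangle$ for the worst weights $w$ induced by $\hat\Sigma$, but your condition only bounds the $w \equiv 1$ case. Near-isotropy alone does not pin down the scaling solution to within $O(\eps)$: this is precisely the content of the Paulsen problem, where $\eps$-doubly-balanced frames can require scalings of size $\Theta(d\eps)$ in general.

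The paper's $\infty$-expansion is a genuinely different condition: for all $y \perp 1_n$ with $\|y\|_\infty \le 1$, one has $\bigl\|\sum_j y_j\, v_j v_j^T\bigr\|_{\op} \le \tfrac{s(V)(1-\lambda)}{d}$. This is a statement about all bounded \emph{column reweightings}, not about subspaces of $\R^d$, and it is exactly what is needed to control the interaction between the left and right scalings in the fixed-point iteration. The paper establishes it via a column-subset pseudorandomness condition (spectral bounds on $V_B V_B^T$ for all $|B|=n/2$), proved first for Gaussian frames and then transferred through normalization. The deterministic step is then carried out not by a one-shot eigenvector argument but by analyzing a continuous gradient-flow dynamical system, showing that $\infty$-expansion forces exponential decay of $\|(E_t,F_t)\|_{\op}$ and hence an $O(\eps/\lambda)$ bound on the scaling solution.
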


    This improves on the main Theorem 1.1 of \cite{FM20} by removing $\log d$ factors from the sample threshold and error bound. 
    Furthermore, both the sample threshold and error rate are optimal up to constant factors, as they match the lower bound for covariance estimation for the special case of multivariate Gaussian distributions. 
    Theorem 1.2 of \cite{FM20} gives an optimal error guarantee in the Frobenius norm, but only in the large sample setting $n \gtrsim d^{2}$.
    As $\|\cdot\|_{F} \leq \sqrt{d} \|\cdot\|_{\op}$, our \cref{t:mainSampleComplexity} recovers this optimal guarantee while improving the sample requirement by a $d$ factor.  

    %TODO4
    Our measure of error in \cref{t:mainSampleComplexity} is known as the relative operator norm. 
    This is the relevant measure for statistical applications, as Arbas et al \cite{arbas2023polynomial} show strong bounds on the relative operator norm bounds between $\Sigma, \hat{\Sigma}$ imply similar strong bounds on KL-divergence and total variation distance between the corresponding Gaussian distributions $N(0,\Sigma), N(0,\hat{\Sigma})$.
    % The bound can also equivalently be written as a multiplicative bound $x^{T} \hat{\Sigma} x \in (1 \pm O(\eps)) x^{T} \Sigma x$. This is useful for various spectral applications such as PCA \cite{BL08}. 

    For our second main result, we recover the algorithmic convergence of \cite{FM20} with fewer samples. 

\begin{theorem} \label{t:TylerIterativeAnalysis}
    Given $n \gtrsim d$ samples from an elliptical distribution with shape $\Sigma \in \PD(d)$, with probability $\geq 1 - \exp(-\Omega(n))$ the $T$-th iterate $\overline{\Sigma}_{(T)}$ of Tyler's procedure approximates the M-Estimator $\hat{\Sigma}$ as 
    \[ \| I_{d} - \hat{\Sigma}^{1/2} \overline{\Sigma}_{(T)}^{-1} \hat{\Sigma}^{1/2}\|_{F} \leq \delta  \]
    % \[ \| \log \hat{\Sigma}^{-1/2} \overline{\Sigma} \hat{\Sigma}^{-1/2} \|_{F} \leq \delta  \]
    in $T \lesssim |\log \det \Sigma | + d + \log(1/\delta)$ iterations.  
\end{theorem}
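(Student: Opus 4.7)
The plan is to build on the operator scaling interpretation of Tyler's iteration due to Franks and Moitra \cite{FM20}. In that framework, Tyler's fixed-point update corresponds to coordinate descent on a geodesically convex log-det potential $\Phi$ on $\PD(d)/\mathbb{R}_{>0}$ whose unique minimizer is the M-estimator $\hat{\Sigma}$. The analysis splits naturally into two phases: a \emph{global phase} that drives the potential down from its initial value by a constant per step, followed by a \emph{local phase} of linear convergence in a constant-radius neighborhood of $\hat{\Sigma}$ that yields the $\log(1/\delta)$ dependence.

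For the global phase, I would start from $\overline{\Sigma}_{(0)} = I_{d}$ (or any scale-normalized initialization) and show that every iteration decreases $\Phi$ by at least an absolute constant, as long as the iterate is outside a constant-radius ball around $\hat{\Sigma}$. This part is essentially the Franks--Moitra descent lemma; the key input is a per-step progress bound that only requires the input to be non-degenerate, which is guaranteed by the $\infty$-expansion property at the sample threshold $n \gtrsim d$. I would then upper bound the initial potential gap $\Phi(I_{d}) - \Phi(\hat{\Sigma})$ by $O(|\log\det\Sigma| + d)$: the $|\log\det\Sigma|$ term arises because \cref{t:mainSampleComplexity} places $\hat{\Sigma}$ close to $\Sigma$ in relative operator norm, so $|\log\det\hat{\Sigma}|$ is within $O(d)$ of $|\log\det\Sigma|$, while the residual $O(d)$ accounts for the dimension-dependent part of the log-det potential. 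Hence the global phase terminates in $O(|\log\det\Sigma| + d)$ iterations.

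For the local phase, I would use the $\infty$-expansion condition (which by assumption holds with probability $1 - \exp(-\Omega(n))$ at $n \gtrsim d$) to produce a uniform strong-convexity bound for $\Phi$ in the relative metric appearing in the theorem. Concretely, I would translate $\infty$-expansion through the novel scaling result established earlier in the paper to obtain a Hessian lower bound, yielding a contraction
\[
\bigl\| I_{d} - \hat{\Sigma}^{1/2} \overline{\Sigma}_{(t+1)}^{-1} \hat{\Sigma}^{1/2} \bigr\|_{F} \leq (1-\gamma)\, \bigl\| I_{d} - \hat{\Sigma}^{1/2} \overline{\Sigma}_{(t)}^{-1} \hat{\Sigma}^{1/2} \bigr\|_{F}
\]
for some absolute constant $\gamma > 0$, once the iterate lies in the constant-radius basin reached at the end of the global phase. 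Iterating this contraction gives the required $O(\log(1/\delta))$ steps, and combining with the global-phase count proves the stated bound $T \lesssim |\log\det\Sigma| + d + \log(1/\delta)$.

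The main obstacle is the local convergence step. The Franks--Moitra analysis obtained its contraction from an $\ell_{2}$-type expansion condition that only held once $n \gtrsim d\log d$, precisely the suboptimal regime this paper aims to improve. Here, the weaker $\infty$-expansion is all we have at the optimal threshold $n \gtrsim d$, so one must work harder to extract a multiplicative decrease on the relative Frobenius distance. I expect the resolution to come directly from the new scaling theorem for $\infty$-expanding inputs: it should play the role of the strong-convexity estimate, relating the gradient of $\Phi$ to the relative distance in the Frobenius metric. A secondary technical issue is ensuring that all high-probability events -- $\infty$-expansion, the relative-norm bound of \cref{t:mainSampleComplexity}, and any auxiliary concentration needed to bound $|\log\det\hat{\Sigma}|$ -- hold simultaneously with probability $1 - \exp(-\Omega(n))$ at the common threshold $n \gtrsim d$.
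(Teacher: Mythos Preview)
Your two-phase outline matches the paper's structure, and your handling of the global phase (including the $|\log\det\hat{\Sigma}| \approx |\log\det\Sigma|$ estimate via \cref{t:mainSampleComplexity}) is essentially what the paper does. The divergence is in the local phase, precisely where you flag the main obstacle.

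You propose to extract strong convexity directly from $\infty$-expansion via the new scaling theorem (\cref{t:frameMainInfty}). That theorem, however, bounds the scaling \emph{solution}---it is a statement about the optimizer of the potential, not about its Hessian---so it does not obviously yield a contraction inequality. The paper takes a more indirect but cleaner route: it proves a separate result, \cref{t:InftyImpliesQ}, showing that $(1-\lambda)$-$\infty$-expansion implies $(1-\Omega(\lambda^{2}))$-\emph{quantum} expansion, via a Cheeger-style argument built on the pseudorandom reformulation of \cref{l:pseudoInftyRelation}. Once quantum expansion is available, the Franks--Moitra strong-convexity analysis (\cref{t:FMAlgoExtract}) applies verbatim; no new Hessian estimate is needed. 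So the resolution is not to replace the $\ell_{2}$-type condition by something derived from $\infty$-expansion, but to show that the $\ell_{2}$ condition is already \emph{implied} by $\infty$-expansion at the lower sample threshold.

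There is one further subtlety your outline glosses over: the expansion hypothesis in \cref{t:FMAlgoExtract} must hold for the frame $U$ normalized by the \emph{estimator} $\hat{\Sigma}$, i.e.\ at the scaling solution, not merely for the $\Sigma$-normalized input $V$. The paper handles this via item (3) of the refined scaling result \cref{t:frameMainInftyR}, which shows that $\infty$-expansion is preserved (with parameter $\lambda/2$) when passing to the scaling solution; only then is \cref{t:InftyImpliesQ} applied to $U$ to feed into the Franks--Moitra machinery.
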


    This improves on the sample requirement of Theorem 1.3 in \cite{FM20} by $\log^{2} d$. Further, this requirement is optimal, as the estimator is not even well-defined for $n < d$.

    In the following subsection, we discuss our techniques, showing how they build on and improve the previous approach of \cite{FM20}. 

    \subsection{Techniques}

    An important observation of Franks and Moitra \cite{FM20} was to show a connection between Tyler's M-Estimator and \emph{frame and operator scaling}. This is an optimization problem over matrices arising in the context of geometric invariant theory, and has recently attracted much interest due to its connections to problems in algebraic complexity (see \cite{GGOW15}, \cite{Towards}). 
    The key technical contribution in \cite{FM20} is to show that, for sufficiently large sample size $n \gtrsim d \log d$, the data from Elliptical distributions satisfy a `quantum expansion' property. 
    They can then appeal to sophisticated scaling results of Kwok et al \cite{KLR} for quantum expanders to prove their results for Tyler's M-Estimator. 

    In this work, we follow a similar approach, improving both parts of the argument to prove optimal guarantees.
    We first identify a stronger `pseudorandom' condition, $\infty$-expansion (see \cref{d:frameInftyExpansion}), and prove that $n \gtrsim d$ samples from an Elliptical distribution satisfies this condition with high probability.
    Then we give a novel analysis of frame scaling, showing stronger operator norm bounds than the results of \cite{KLR} when the input satisfies $\infty$-expansion.
    We believe this result is of independent interest, and in \cref{s:conclusion} we discuss potential future applications to the Paulsen problem in frame theory and the tensor normal model in statistics. 
    Our sample complexity results follow by combining the above two steps. 

    Our proof of the algorithmic result in 
    %DAHLIAEDIT
    \cref{t:TylerIterativeAnalysis}
    %\cref{t:TylerIterativeAnalysis}
    also crucially uses the connection to scaling. In \cite{FM20}, the authors study 
    % the \emph{capacity} function, which gives 
    a \emph{geodesically convex} optimization formulation for Tyler's M-Estimator. They use this perspective to show: (1) Tyler's iterative procedure can be seen as a natural descent method;
    % for this geodesically convex function; 
    (2) quantum expansion is related to a geodesic version of \emph{strong convexity}. 
    The convergence follows by standard convex arguments applied to this geodesic setting. 

    In our work, we use a different `pseudorandom' condition, $\infty$-expansion, instead of quantum expansion.
    % do we need this?
    % and we do not use the connection to geodesic optimization. 
    Nevertheless we show that the algorithmic convergence analysis of \cite{FM20} still follows from our results. 
    Concretely, in \cref{app:InftyImpliesQuantumExpansion} we show that our $\infty$-expansion condition implies quantum expansion, which allows us to apply the same argument as in \cite{FM20} to prove fast convergence of Tyler's iterative procedure at the optimal sample threshold.

\section{Preliminaries and Technical Overview} \label{s:prelim}

In this section we formally define the statistical estimation problem considered in this work.
% on Elliptical distributions.
We also formally define frame scaling and show its connection to Tyler's M-Estimator. 
We end this section with a proof outline of our results, including the main technical ingredients.
% along with formal statement of the key technical ingredients used in the proof.
% (1) a new analysis of frame scaling for inputs satisfying $\infty$-expansion; and (2) that random inputs satisfy this $\infty$-expansion condition at the optimal sample threshold.

\textbf{Notation}: we use $f \lesssim g$ and $f \leq O(g)$ interchangeably to mean that there is a universal constant $C > 0$ such that $f \leq C \cdot g$, and we use $f \gtrsim g$ and $f \geq \Omega(g)$ for the opposite inequality. 
$S^{d-1} \subseteq \R^{d}$ is the set of unit vectors;
$\PD(d)$ is the set of positive definite matrices in $\R^{d \times d}$;
and $\diag(n)$ is the set of diagonal matrices in $\R^{n \times n}$.
For vector $y \in \R^{n}$, $\diag(y)$ is the diagonal matrix with entries $y_{j}$; and by abuse of notation, for matrix $F \in \R^{n \times n}$, $\diag(F)$ is the diagonal matrix with the same entries as $F$ on the diagonal and remaining entries zero.  
% either the vector $f \in \R^{d}$ with $f_{j} = F_{jj}$ or 

\subsection{Elliptical Distributions and Tyler's M-Estimator}

We begin with the formal definition of our statistical model.

  \begin{definition} [Elliptical Distribution] \label{d:EllipticalDist}
      For fixed `shape' matrix $\Sigma \in \PD(d)$ and scalar random variable $u \in \R$, the Elliptical random variable $X \sim E(\Sigma,u)$ is distributed as 
      \[ X := \Sigma^{1/2} V \cdot u   \]
      where $V \sim S^{d-1}$ is a uniformly random unit vector, and $u$ is independent of $V$. 
  \end{definition}

  These generalize the family of Gaussian distributions: for standard normal $g \sim N(0,I_{d})$, the norm $\|g\|_{2}$ is independent of the direction $g/\|g\|_{2}$ which is uniformly distributed on the sphere $S^{d-1}$; therefore $N(0,\Sigma)$ is equivalent to $E(\Sigma, u)$ for $u := \|g\|_{2}$. 
  % . Formally, the multivariate normal distribution $N(0,\Sigma)$ is equivalent to the Elliptical distribution $E(\Sigma, u)$ where $u := \|g\|_{2}$ is 
  % for which the norm and direction are independent random variables.
  By computing the second moment, we see that if the covariance matrix of an Elliptical distribution exists, then it must be proportional to the shape matrix. 
  % Another special case that will be of importance in our analysis is $E(I_{d}, 1)$, which is just the uniform distribution over the unit sphere $S^{d-1}$. 

  In this work, we study the following estimator for the shape matrix $\Sigma$.

  \begin{definition} [Tyler's M-Estimator] \label{d:TylerMEstimator}
      Given $x_{1}, ..., x_{n} \in \R^{d}$, consider the following equations:
      \[ \frac{d}{n} \sum_{j=1}^{n} \frac{x_{j} x_{j}^{T}}{ x_{j}^{T} \hat{\Sigma}^{-1} x_{j} } = \hat{\Sigma}, \qquad \text{ and } \quad \tr[\hat{\Sigma}] = d.      \]
      If the above equations have a \emph{unique} solution in $\hat{\Sigma} \in \PD(d)$, then Tyler's M-estimator is defined to be that solution; otherwise it is not well-defined.  
  \end{definition}

  % For intuition, we can consider the special case of Gaussian distributions. In this case, it can be shown that the sample covariance approximately satisfies this equation. This matches our intuition that the sample covariance is indeed an accurate estimator for the true covariance. 

  % Intuition: if we knew scalar $u_{j}$, we could normalize and compute sample covariance as good estimator. 

  % The following invariance property can be readily verified from the definition: if $X \to A X$ for invertible $A \in \R^{d \times d}$, then $\hat{\Sigma} \to A \hat{\Sigma} A^{T}$. 
  % This simple observation allows us to reduce the analysis for general elliptical distributions to the special case $E(I_{d},1)$, see \cref{l:EllipticalReductiontoI}. 
  % This is equivalent to the uniform distribution on the sphere, for which we can then apply strong concentration bounds to prove our error guarantees. 
  
  % Note: Invariance of estimator and error measure, so we can reduce to identity. 
  % This gives rise to natural iterative procedure to improve shape guess. 

  \subsection{Frame Scaling} \label{ss:PrelimFrameScaling}

  In this subsection, we define frame scaling and describe the connection to Tyler's M-Estimator. 
  Frames are linear algebraic primitives with applications to a variety of fields including coding theory \cite{Frames}, learning theory \cite{diakonikolas2021forster}, and communication complexity \cite{Forster}.
  Formally, they are spanning sets of vectors $V := \{v_{1}, ..., v_{n}\} \in \R^{d \times n}$, and can be thought of as overcomplete bases: $x \in \R^{d}$ can be encoded in a redundant manner in terms of its `frame coefficients' $\{\langle x, v_{j} \rangle\}_{j \in [n]}$. The two most basic properties of frames are the isotropy condition, $V V^{T} = I_{d}$ and the equal-norm condition $\|v_{j}\|_{2}^{2} = \frac{d}{n}$. The first allows for easy reconstruction from frame coefficients $x = \sum_{j \in [n]} \langle x, v_{j} \rangle v_{j}$, and the second gives a balanced encoding where no coefficient is too important on average.  
  The following quantities are used to measure the quality of a frame:
  %linear algebraic objects that are studied in }

    \begin{definition} \label{d:sizeRowCol}
    Given frame $V \in \R^{d \times n}$, its size is $s(V) := \|V\|_{F}^{2}$, and its error is
    \[ E(V) := d \cdot V V^{T} - s(V) I_{d}, \qquad 
    F(V) := \diag(n \cdot V^{T} V - s(V) I_{n}) . \]
    \end{definition}

    Observe that $E(V)$ measures distance from the isotropy condition, and $F(V)$ measures distance from the equal-norm condition.
    % These two properties are the most basic requirements for frames and are useful for many applications.
    % The goal of the frame scaling problem, defined below, is to produce a frame with error $E = 0, F = 0$. 
    The goal of frame scaling is to transform a given frame to satisfy these two conditions simultaneously. 
    We will use the following measures of error: 

    \begin{definition} \label{d:DeltaOpError}
    For frame $V \in \R^{d \times n}$, the $\ell_{2}$ and $\ell_{\infty}$ error measures are 
    \[ \Delta(V) := \frac{1}{d} \|E(V)\|_{F}^{2} + \frac{1}{n} \|F(V)\|_{F}^{2} , \qquad \|(E, F) \|_{\op} := \max\{ \|E\|_{\op}, \|F\|_{\op} \}  .  \]
    % = \frac{\| d V V^{T} - s I_{d} \|_{F}^{2}}{d}  + \frac{\sum_{j \in [n]} (n \|v_{j}\|_{2}^{2} - s )^{2} }{n} .  \]
    $V$ is $\eps$-doubly balanced if $\|(E, F) \|_{\op} \leq s(V) \cdot \eps$, and is doubly balanced if $\eps=0$. 
    \end{definition}

    % We note the following simple comparison between the two error measures.  
    We note the following simple relation for later use.  

    \begin{fact} [Lemma 2.15 in \cite{KLR}] \label{f:matrixdbSmallGrad}
    For frame $V \in \R^{d \times n}$, $\Delta(V) \leq \|E(V)\|_{\op}^{2} + \|F(V)\|_{\op}^{2}$. In particular if $V$ is $\eps$-doubly balanced then $\Delta(V) \leq 2 s(V)^{2} \cdot \eps^{2}$. 
    \end{fact}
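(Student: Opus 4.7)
The plan is to reduce each of the two Frobenius-norm contributions in $\Delta(V)$ to its corresponding operator-norm bound by invoking the elementary inequality $\|M\|_F^2 \leq \mathrm{rank}(M) \cdot \|M\|_{\op}^2$, which follows immediately from writing $\|M\|_F^2 = \sum_i \sigma_i(M)^2$ and bounding each singular value by $\|M\|_{\op}$.

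First I would apply this inequality to $E(V) \in \R^{d \times d}$: since $E(V)$ is symmetric with at most $d$ nonzero singular values, each bounded by $\|E(V)\|_{\op}$, we get $\tfrac{1}{d} \|E(V)\|_F^2 \leq \|E(V)\|_{\op}^2$. Analogously, $F(V) \in \R^{n \times n}$ is diagonal with at most $n$ nonzero entries, each bounded in absolute value by $\|F(V)\|_{\op}$, so $\tfrac{1}{n} \|F(V)\|_F^2 \leq \|F(V)\|_{\op}^2$. Summing these two inequalities and comparing with the definition of $\Delta(V)$ yields the first claim $\Delta(V) \leq \|E(V)\|_{\op}^2 + \|F(V)\|_{\op}^2$.

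For the ``in particular'' statement, once $V$ is $\eps$-doubly balanced, \cref{d:DeltaOpError} gives $\|E(V)\|_{\op} \leq s(V) \cdot \eps$ and $\|F(V)\|_{\op} \leq s(V) \cdot \eps$, and substituting these into the first bound produces $\Delta(V) \leq 2 s(V)^2 \eps^2$.

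Since this is a direct one-line consequence of the rank-vs-operator-norm inequality, I do not anticipate any real obstacle; the only sanity check is that the dimension prefactors $1/d$ and $1/n$ in the definition of $\Delta(V)$ exactly match the ambient sizes of $E(V)$ and $F(V)$ respectively, which they do by construction in \cref{d:sizeRowCol}.
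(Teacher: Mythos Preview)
Your argument is correct: the inequality $\|M\|_F^2 \le \operatorname{rank}(M)\,\|M\|_{\op}^2$ applied to $E(V)\in\R^{d\times d}$ and $F(V)\in\R^{n\times n}$ gives exactly the two terms in $\Delta(V)$, and the $\eps$-doubly balanced consequence is immediate from \cref{d:DeltaOpError}. The paper does not supply its own proof of this fact---it simply records it as Lemma~2.15 of \cite{KLR}---so there is nothing further to compare; your derivation is the natural elementary justification.
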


    We can now formally define frame scaling. 
  
    \begin{definition} [Frame Scaling Problem] \label{d:frameScalingProblem}
        Given frame $U \in \R^{d \times n}$, find left/right scalings $L \in \R^{d \times d}$ and $R \in \diag(n)$ such that $V := L U R$ is doubly balanced:
        \[ V V^{T} = \frac{s(V)}{d} I_{d}, \qquad \forall j \in [n]: \|v_{j}\|_{2}^{2} = \frac{s(V)}{n} .   \]
        % More generally, for precision $\delta > 0$, find left/right scalings $L,R$ such that $\Delta(L V R) \leq s(V)^{2} \cdot \delta^{2}$, or report that such scalings do not exist. 
    \end{definition}

    % As our result involves the $\eps$-doubly balanced condition, 

    The key insight in \cite{FM20} is the following connection between frame scaling and Tyler's M-Estimator, which can be verified directly by comparing definitions. 

    \begin{lemma} [Example 2.3 in \cite{FM20}] \label{l:EllipticalFrameCorrespondence}
        Consider input $X = \{x_{1}, ..., x_{n} \} \in \R^{d \times n}$. 
        \begin{itemize}
            \item If $\hat{\Sigma}$ is the M-Estimator for input $\{x_{1}, ..., x_{n}\}$ according to \cref{d:TylerMEstimator}, then the following defines a scaling solution for frame $X$ according to \cref{d:frameScalingProblem}:
            \[ L := \hat{\Sigma}^{-1/2} \quad , \quad R_{jj} := ( \langle x_{j}, \hat{\Sigma}^{-1} x_{j} \rangle)^{-1/2} . \] 
            \item Conversely, if $(L,R)$ is a frame scaling solution for $X$, then the following satisfies the equations in \cref{d:TylerMEstimator} for Tyler's M-Estimator:
            \[  \hat{\Sigma} = \frac{d \cdot ( L^{T} L )^{-1}}{\tr[ (L^{T} L)^{-1} ]}  .  \]
        \end{itemize}
        %let $(L,R)$ be the frame scaling solution for $X$ according to
        %\cref{d:frameScalingProblem},
        % and let $\hat{\Sigma}$ be Tyler's M-estimator for $\{x_{1}, ..., x_{n}\}$ according to \cref{d:TylerMEstimator}. These are related as
        % \[ \bigg( L := \hat{\Sigma}^{-1/2}, R_{jj} := ( \langle x_{j}, \hat{\Sigma}^{-1} x_{j} \rangle)^{-1/2} \bigg)  \qquad \longleftrightarrow \qquad
        % \hat{\Sigma} = \frac{d \cdot ( L^{T} L )^{-1}}{\tr[ (L^{T} L)^{-1} ]} .    \]
    \end{lemma}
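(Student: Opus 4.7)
The plan is to verify both directions by direct computation, using the equations defining Tyler's M-Estimator and the definition of a doubly-balanced frame.

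For the forward direction, I assume $\hat{\Sigma}$ satisfies $\frac{d}{n}\sum_{j} \frac{x_{j} x_{j}^{T}}{x_{j}^{T} \hat{\Sigma}^{-1} x_{j}} = \hat{\Sigma}$ and $\tr[\hat\Sigma]=d$, and define $v_{j} := R_{jj}\hat\Sigma^{-1/2} x_{j}$ so that $V = LXR$. I would first check the equal-norm condition: plugging in $R_{jj}^{2} = 1/(x_{j}^{T}\hat\Sigma^{-1}x_{j})$ immediately gives $\|v_{j}\|_{2}^{2} = R_{jj}^{2}\, x_{j}^{T}\hat\Sigma^{-1}x_{j} = 1$. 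Then for the isotropy condition, I compute
\[ VV^{T} = \sum_{j} R_{jj}^{2}\,\hat\Sigma^{-1/2} x_{j} x_{j}^{T}\hat\Sigma^{-1/2} = \hat\Sigma^{-1/2}\!\left(\sum_{j}\frac{x_{j} x_{j}^{T}}{x_{j}^{T}\hat\Sigma^{-1} x_{j}}\right)\!\hat\Sigma^{-1/2} = \tfrac{n}{d} I_{d},  \]
where the last step uses Tyler's equation. Since $s(V) = \tr[VV^{T}] = n$, this yields $VV^{T} = \tfrac{s(V)}{d}I_{d}$ and $\|v_{j}\|_{2}^{2} = 1 = \tfrac{s(V)}{n}$, as required.

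For the converse, suppose $(L,R)$ solves frame scaling for $X$ and define $\hat\Sigma := d(L^{T}L)^{-1}/\tr[(L^{T}L)^{-1}]$. The trace normalization $\tr[\hat\Sigma]=d$ is immediate from the denominator. Writing $c := \tr[(L^{T}L)^{-1}]/d$ so that $\hat\Sigma^{-1} = c\, L^{T}L$, I compute
\[ x_{j}^{T}\hat\Sigma^{-1} x_{j} = c\, \|L x_{j}\|_{2}^{2} = c\cdot \tfrac{\|v_{j}\|_{2}^{2}}{R_{jj}^{2}} = \tfrac{c\cdot s(V)}{n R_{jj}^{2}}, \]
using the equal-norm condition. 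Substituting into the left-hand side of Tyler's equation,
\[ \frac{d}{n}\sum_{j}\frac{x_{j} x_{j}^{T}}{x_{j}^{T}\hat\Sigma^{-1} x_{j}} = \frac{d}{c\cdot s(V)}\sum_{j} R_{jj}^{2}\, x_{j} x_{j}^{T} = \frac{d}{c\cdot s(V)}\cdot L^{-1}(VV^{T})L^{-T} = \frac{(L^{T}L)^{-1}}{c} = \hat\Sigma, \]
where the middle equality uses $\sum_{j} R_{jj}^{2} x_{j} x_{j}^{T} = L^{-1}VV^{T}L^{-T}$ and the isotropy condition $VV^{T} = \tfrac{s(V)}{d}I_{d}$.

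There is no real technical obstacle here, as both directions reduce to algebraic rearrangement. The only subtlety worth flagging is the role of the trace normalization: the pair $(L,R)$ is only determined up to the rescaling $(\alpha L, \alpha^{-1} R)$, and the denominator $\tr[(L^{T}L)^{-1}]$ in the definition of $\hat\Sigma$ is precisely what cancels this gauge freedom and selects the unique representative with $\tr[\hat\Sigma]=d$.
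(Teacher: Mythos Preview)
Your proof is correct and follows exactly the approach the paper indicates: the paper does not spell out a proof but simply remarks that the correspondence ``can be verified directly by comparing definitions,'' and your two computations do precisely that. Your closing remark about the gauge freedom $(\alpha L,\alpha^{-1}R)$ and the role of the trace normalization is accurate and a helpful clarification beyond what the paper states.
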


    Therefore, analyzing Tyler's M-Estimator for input $\{x_{1}, ..., x_{n}\}$ is equivalent to analyzing the scaling solution for frame $X$.

    \subsection{Technical Overview} \label{ss:techOverview}

    In this subsection, we give an outline of our proof, including our main technical contributions. 

    The first step, following \cite{FM20}, is to notice some useful invariance properties: the equations in \cref{d:TylerMEstimator} are unaffected by scalar transformation $x_{j} \to \lambda x_{j}$ for $\lambda \in \R$; similarly, a direct computation shows that if $\hat{\Sigma}$ satisfies the equations in \cref{d:TylerMEstimator} for input $\{x_{j}\}$, then for any invertible $A \in \R^{d \times d}$, $A \hat{\Sigma} A^{T}$ satisfies the equations for transformed input $\{ A x_{j}\}$; finally, the relative operator error considered in \cref{t:mainSampleComplexity} is invariant under linear transformations: 
    \[ (A \Sigma A^{T})^{1/2} (A \hat{\Sigma} A^{T})^{-1} (A \Sigma A^{T})^{1/2} = \Sigma^{1/2} \hat{\Sigma}^{-1} \Sigma^{1/2}  .   \]
    Combining these properties give a reduction from general Elliptical distributions to a simpler special case:
    %and the error measure are invariant under linear transformations of the data. 
    % This allows us to reduce our analysis to the special case $E(I_{d},1)$. 

    % Combining this with \cref{l:EllipticalReductiontoI}, we have reduced analysis of Tyler's M-Estimator to proving strong bounds for frame scaling when the input columns are random unit vectors. This is accomplished in the following section.  

        \begin{lemma} [Observation 3.1 in \cite{FM20}] \label{l:EllipticalReductiontoI}
        Consider samples $x_{1}, ..., x_{n} \sim E(\Sigma, u)$ from the elliptical distribution with shape matrix $\Sigma$. 
        Then the `normalized' data $v_{j} := \frac{\Sigma^{-1/2} v_{j}}{\|\Sigma^{-1/2} v_{j}\|_{2}}$ follows Elliptical distribution $E(I_{d}, 1)$. Further, if $\hat{\Sigma}_{X}, \hat{\Sigma}_{V}$ are the M-estimators for $X,V$ according to \cref{d:TylerMEstimator}, then  
    \[ \hat{\Sigma}_{V} = \Sigma^{-1/2} \hat{\Sigma}_{X} \Sigma^{-1/2} \quad \text{and} \qquad 
    \| I_{d} - \Sigma^{1/2} \hat{\Sigma}_{X}^{-1} \Sigma^{1/2} \|_{\op} = \| I_{d} - \hat{\Sigma}_{V}^{-1} \|_{\op}  .  \]
    \end{lemma}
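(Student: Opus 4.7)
The plan is to verify the two assertions by direct computation, leveraging the two invariance properties of Tyler's fixed-point system (scalar invariance $x_j \mapsto \lambda_j x_j$ and linear covariance under $x_j \mapsto A x_j$) highlighted at the start of the subsection. First I would unpack \cref{d:EllipticalDist} and write $x_j = \Sigma^{1/2} W_j u_j$ with $W_j \sim \mathrm{Unif}(S^{d-1})$ independent of $u_j \in \R$. Then $\Sigma^{-1/2} x_j = u_j W_j$ has norm $|u_j|$, so $v_j = \mathrm{sign}(u_j) W_j$. Since the uniform measure on $S^{d-1}$ is invariant under the antipodal map, $v_j$ is again uniform on $S^{d-1}$, i.e.\ $v_j \sim E(I_d, 1)$.

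Next, to establish the estimator identity, I would verify directly that $\tilde\Sigma := \Sigma^{-1/2} \hat\Sigma_X \Sigma^{-1/2}$ satisfies the fixed-point equation of \cref{d:TylerMEstimator} on the dataset $V = \{v_j\}$. Substituting $v_j = \Sigma^{-1/2} x_j / t_j$ with $t_j := \|\Sigma^{-1/2} x_j\|_2$, the scalar $t_j^2$ appears identically in the outer product $v_j v_j^T$ and in the quadratic form $v_j^T \tilde\Sigma^{-1} v_j$, so it cancels out of every summand. Conjugating the resulting identity by $\Sigma^{1/2}$ on both sides reduces the check to the fixed-point equation that $\hat\Sigma_X$ already satisfies on $X$. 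Uniqueness of the M-estimator together with the trace-$d$ normalization then identifies $\hat\Sigma_V$ with $\tilde\Sigma$. The operator-norm identity is then algebraic: inverting the relation gives $\hat\Sigma_V^{-1} = \Sigma^{1/2} \hat\Sigma_X^{-1} \Sigma^{1/2}$, and the two quantities whose operator distance to $I_d$ is being compared are literally equal.

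I expect the main subtle point to be reconciling the trace-$d$ normalizations of $\hat\Sigma_X$ and $\hat\Sigma_V$ across the linear change of variables by $\Sigma^{-1/2}$. Since the fixed-point equation is scale-invariant in its unknown, $\tilde\Sigma$ need not satisfy $\tr(\tilde\Sigma) = d$ on the nose, so one must argue that the scalar correction can be absorbed consistently, either by noting that the downstream arguments of \cref{t:mainSampleComplexity} are insensitive to this rescaling, or by adjusting the normalization convention so that the stated identity becomes exact rather than merely holding up to a positive scalar.
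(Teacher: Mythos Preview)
Your proposal is correct and follows the same approach as the paper, which does not give a standalone proof but sketches exactly the scalar and linear invariance properties in the paragraph immediately preceding the lemma (deferring to \cite{FM20} for details). Your flag on the trace-$d$ normalization is apt: the displayed identity $\hat\Sigma_V = \Sigma^{-1/2}\hat\Sigma_X\Sigma^{-1/2}$ indeed holds only up to a positive scalar, and the paper silently absorbs this scalar in the proof of \cref{t:mainSampleComplexity} via the Taylor step $\tfrac{1}{d}\tr[(L^TL)^{-1}] = 1 + O(\|L-I_d\|_{\op})$.
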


    This shows, in order to prove error bounds for Tyler's M-Estimator in the relative operator norm, it suffices to just consider the special case of $E(I_{d}, 1)$. 
    We note that this reduction is only for the sake of analysis, as we do not have knowledge of the true shape matrix when computing the estimator.
    This is equivalent to the uniform distribution on the sphere $S^{d-1}$, and we will use concentration properties of this simple distribution for our results. 

    % For $X \sim E(\Sigma, u)$, the `normalized' random variable $V := \frac{\Sigma^{-1/2} X}{\|\Sigma^{-1/2} X\|_{2}}$ is distributed according to $V \sim E(I_{d}, 1)$, which is exactly the uniform distribution over the unit sphere $S^{d-1}$. 
    % This allows us to reduce our analysis to the simpler setting $E(I_{d},1)$, i.e. the underlying distribution is the uniform distribution over $S^{d-1}$. 
    % For the purposes of analyzing Tyler's M-Estimator for arbitrary Elliptical distributions, the above lemma shows that we can assume the data follows the simpler distribution $E(I_{d},1)$. 
    % Note that this is equivalent to the uniform distribution over the unit sphere $S^{d-1}$. 

    Next, we use the connection between Tyler's M-Estimator and frame scaling as described in
    \cref{l:EllipticalFrameCorrespondence}.
    If we can prove strong bounds on the left scaling solution for frame $V \in \R^{d \times n}$ with columns $v_{j} \sim S^{d-1}$, then this transfers directly to error bounds on the estimator. 

    In \cite{FM20}, the authors used the following property to analyze frame scaling. 

    \begin{definition} [Quantum Expansion] \label{d:frameQuantumExpansion}
    Frame $V \in \R^{d \times n}$ is a $(1-\lambda)$ quantum expander if 
        \[ \forall y \perp 1_{n}, \|y\|_{2} \leq 1 : \quad \bigg\| \sum_{j \in [n]} y_{j} v_{j} v_{j}^{*} \bigg\|_{F} \leq \frac{s(V) (1 - \lambda)}{\sqrt{dn}}  .    \]
    \end{definition}

    This is also called the `spectral gap condition' in the work of Kwok et al \cite{KLR}, where the authors show the following strong scaling bound.

    \begin{theorem} [Theorem 1.7 in \cite{KLR}] \label{t:KLR}
        Let frame $V \in \R^{d \times n}$ be $\eps$-doubly balanced and satisfy $(1-\lambda)$-quantum expansion. If $\lambda^{2} \gtrsim \eps \log d$ then the scaling solution satisfies 
        \[ \|L - I_{d} \|_{\op} \lesssim \frac{\eps \log d}{\lambda} .    \]
    \end{theorem}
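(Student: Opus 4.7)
My plan is to view frame scaling as a geodesically convex optimization problem via the Kempf-Ness potential, exploit $(1-\lambda)$ quantum expansion as a strong convexity / spectral-gap condition, and then bound the cumulative scaling $L$ in operator norm. Concretely, I would introduce a potential $\Phi(L, R) = -\log \det(L V R R^T V^T L^T)$ augmented with linear trace-normalization terms, whose stationary points over $\PD(d)$ paired with positive entries of $\diag(n)$ correspond exactly to doubly balanced scalings of $V$.

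\textbf{Strong convexity and convergence.} I would compute the Hessian of $\Phi$ at a balanced frame along a tangent direction $(H, D)$; it reduces to quadratic forms involving $\sum_{j} y_j v_j v_j^T$-type expressions with $y \perp 1_n$. The $(1-\lambda)$ quantum expansion assumption lower bounds these quantities by $\Omega(\lambda^2)$ in the geodesic inner product, yielding geodesic strong convexity of $\Phi$. Since $V$ is $\eps$-doubly balanced, \cref{f:matrixdbSmallGrad} controls the initial gradient in Frobenius norm, so Riemannian gradient descent converges linearly at rate $1 - \Omega(\lambda^2)$; after $T = O(\lambda^{-2} \log(d/\delta))$ iterations the iterate reaches $\delta$-precision in Frobenius norm.

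\textbf{Operator norm of the cumulative scaling.} The cumulative scaling decomposes as a product $L = \prod_t L_t$ of small one-step corrections. In Frobenius, $\|L_t - I_d\|_F$ is controlled by the current gradient and decays geometrically, yielding a total distance of $O(\eps/\lambda)$. The task is then to convert this Frobenius-natural analysis into an operator norm bound on $L$. A naive conversion loses $\sqrt{d}$, but the theorem only loses $\log d$. I expect this requires either a matrix concentration inequality (matrix Bernstein or noncommutative Khintchine) applied to $\log L \approx \sum_t \log L_t$ after controlling commutator corrections, or an operator-norm refinement of the per-step contraction that introduces a $\log d$ loss and then sums geometrically to $\eps \log d / \lambda$.

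\textbf{Main obstacle.} The principal difficulty is precisely this Frobenius-to-operator-norm conversion. The quantum expansion hypothesis is intrinsically an $\ell_2$-type condition on $\sum_j y_j v_j v_j^T$, so operator-norm control on $L$ is not immediate, and the $\log d$ loss here is essentially tight for this pseudorandom condition. This bottleneck is exactly what the present paper sidesteps by introducing the stronger $\infty$-expansion condition (\cref{d:frameInftyExpansion}) and proving a new scaling theorem that avoids the $\log d$ overhead altogether.
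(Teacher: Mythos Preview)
This theorem is a \emph{cited} result (Theorem 1.7 of \cite{KLR}); the present paper does not prove it. The natural comparison is to the KLR framework as the paper describes it when proving its own analogue, \cref{t:frameMainInfty}/\cref{t:frameMainInftyR}.

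Your outline departs from that framework in a substantive way. KLR does not use a Kempf--Ness potential with discrete geodesic gradient descent; it analyzes the continuous dynamical system of \cref{d:frameGradFlow} (a gradient flow, the continuous Sinkhorn/Flip-Flop). The argument is: (i) quantum expansion forces exponential decay of the $\ell_2$ error $\Delta(V_t)$ (the paper attributes this to Prop.~3.9 of \cite{KLR}, cf.\ the remark after \cref{p:EFInftyBothDecrease}); (ii) the scaling is controlled by the integral bound of \cref{l:scalingProdIntBound}, $\|L_T-I_d\|_{\op}\le\exp(\int_0^T\|E(V_t)\|_{\op})-1$; (iii) a robustness lemma (the quantum-expansion analogue of \cref{l:frameInftyRobustness}) keeps the expansion alive along the trajectory. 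The $\log d$ enters because quantum expansion only drives down the Frobenius quantity $\Delta$, whereas the integral in (ii) needs $\|E_t\|_{\op}$; bridging that mismatch inside the dynamical analysis is where the factor appears. The geodesic strong-convexity viewpoint you invoke is in this paper attributed to \cite{FM20} and used for the \emph{algorithmic} convergence of Tyler's procedure (\cref{t:FMAlgoExtract}), not for the scaling bound.

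There is also a concrete gap. Your proposed Frobenius-to-operator conversion via ``matrix Bernstein or noncommutative Khintchine'' applied to $\log L\approx\sum_t\log L_t$ cannot work as stated: $L$ is a deterministic function of the fixed frame $V$, and the increments $\log L_t$ along a descent (or flow) trajectory are neither random nor independent, so there is no probability space on which those inequalities act. The $\log d$ in \cite{KLR} is not a concentration artifact; it comes from the analysis of the flow. Your closing diagnosis---that the $\ell_2$ nature of quantum expansion is the bottleneck, which $\infty$-expansion removes---is correct, but the mechanism you propose for producing the $\log d$ is not.
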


    The main technical contribution of \cite{FM20} is to show that random instances $v_{1}, ..., v_{n} \sim S^{d-1}$ are quantum expanders with high probability when $n \gtrsim d \log d$. This allows them to apply the above sophisticated scaling result of \cite{KLR} to show near-optimal bounds for Tyler's M-Estimator. 
    % We compare this result with ours in more detail later in this section.   
    
    Our key insight for proving optimal bounds involves the following novel notion of expansion. 

    \begin{definition} [$\infty$-Expansion] \label{d:frameInftyExpansion}
    Frame $V \in \R^{d \times n}$ of size $s(V)$ satisfies $(1-\lambda)$-$\infty$-expansion if 
        \[ \forall y \perp 1_{n}, \|y\|_{\infty} \leq 1 : \quad \bigg\| \sum_{j \in [n]} y_{j} v_{j} v_{j}^{*} \bigg\|_{\op} \leq \frac{s(V) (1 - \lambda)}{d}  .    \]
    \end{definition}

    As compared to quantum expansion, \cref{d:frameQuantumExpansion}, this condition involves all $\|y\|_{\infty} \leq 1$ as opposed to all $\|y\|_{2} \leq 1$, while the output is now bounded in terms of the operator norm instead of the Frobenius norm. This is in fact a stronger property, as in \cref{t:InftyImpliesQ} we show $\infty$-expansion implies quantum expansion. 
    We use this condition to give a new improved analysis of frame scaling.
    % which gives optimal bounds in the operator norm.
    % Concretely, $V$ is a $(1-\lambda)$-quantum-expander if 
    % % \[ \forall y \perp 1_{n}, \|y\|_{2} \leq 1 : \quad \bigg\| \sum_{j \in [n]} y_{j} v_{j} v_{j}^{*} \bigg\|_{\op} \leq \frac{s(V) (1 - \lambda)}{\sqrt{dn}}  .    \]
    % $\|V Y V^{T}\|_{F} \leq s(V)(1-\lambda) \|y\|_{2}/\sqrt{dn}$. 
    % The main result of \cite{KLR} shows strong bounds for the frame scaling solution, for inputs that are nearly balanced and satisfy quantum expansion.
    % We give a new analysis using $\infty$-expansion:
    % to prove a stronger operator norm bound on the scaling solution:

    \begin{theorem} \label{t:frameMainInfty}
        Let frame $V \in \R^{d \times n}$ be $\eps$-doubly balanced and satisfy $(1-\lambda)$-$\infty$-expansion. If $\lambda^{2} \gtrsim \eps$ then the scaling solution satisfies 
        \[ \|L - I_{d} \|_{\op} \lesssim \frac{\eps}{\lambda} .    \]
    \end{theorem}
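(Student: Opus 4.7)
The strategy mirrors the proof of \cref{t:KLR} but systematically replaces Frobenius-norm steps with direct operator-norm estimates: wherever Kwok et al.\ must apply quantum expansion to a perturbation $y$ of small $\ell_{\infty}$ norm by first converting to $\ell_{2}$ norm (losing $\sqrt{n}$), we instead apply $\infty$-expansion directly. This is the source of the improved $\log d$ factors.

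Concretely, I would analyze a Sinkhorn-style alternating procedure: set $V_{0} := U$ and iterate $V_{2t+1} := L_{t} V_{2t}$ with $L_{t} := \sqrt{s/d}\, (V_{2t} V_{2t}^{T})^{-1/2}$ (imposing isotropy), then $V_{2t+2} := V_{2t+1} R_{t}$ with $(R_{t})_{jj} := \sqrt{s/n}\,\|v_{2t+1,j}\|_{2}^{-1}$ (equalizing column norms). Let $L := \prod_{t} L_{t}$ and $R := \prod_{t} R_{t}$. The size $s(V_{t}) = s(U)$ is invariant. The key per-iteration computation is
\[ E(V_{2t+2}) \;=\; d \sum_{j} y_{j}\, v_{2t+1,j} v_{2t+1,j}^{T}, \qquad y_{j} := \tfrac{s/n}{\|v_{2t+1,j}\|_{2}^{2}} - 1, \]
with $\|y\|_{\infty} \lesssim \eps_{t}$ (writing $\eps_{t}$ for the current balanced parameter). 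The mean $\bar{y} := \tfrac{1}{n}\sum y_{j}$ is of second order, $|\bar{y}| = O(\eps_{t}^{2})$, since $\sum_{j}\|v_{2t+1,j}\|_{2}^{2} = s$ forces cancellation of the linear term. Decomposing $y = (y - \bar{y}\,1_{n}) + \bar{y}\,1_{n}$, the first piece has $\|\cdot\|_{\infty} \lesssim \eps_{t}$ and lies in $1_{n}^{\perp}$, so $\infty$-expansion of $V_{2t+1}$ bounds its contribution by $s(1-\lambda)\eps_{t}/d$; the second contributes $|\bar{y}| \cdot s/d = O(\eps_{t}^{2} s/d)$. This yields the contraction $\eps_{t+1} \lesssim (1-\lambda)\eps_{t} + \eps_{t}^{2}$, which decays geometrically once $\eps_{t} \ll \lambda$. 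A Taylor expansion of $(V_{2t} V_{2t}^{T})^{-1/2}$ using $\|E(V_{2t})\|_{\op} \leq s\,\eps_{t}$ gives $\|L_{t} - I_{d}\|_{\op} \lesssim \eps_{t}$, and the same for $R_{t}$. Telescoping $L_{t+1} \cdots L_{0} - I = (L_{t+1} - I)(L_{t} \cdots L_{0}) + (L_{t} \cdots L_{0} - I)$, one obtains $\|L - I_{d}\|_{\op} \lesssim \sum_{t} \eps \cdot (1 - \lambda/2)^{t} \lesssim \eps/\lambda$.

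The main obstacle is that $\infty$-expansion is assumed only for the initial frame $V_{0}$, yet the contraction step invokes it for every intermediate $V_{2t+1}$. One must show that expansion is approximately preserved along the iteration. Writing $v_{2t+1,j} = (B_{t})_{jj}\, A_{t}\, v_{0,j}$ with $A_{t}, B_{t}$ the accumulated scalings, one can pull out $A_{t}$, rewrite the test weights as $y'_{j} := y_{j} (B_{t})_{jj}^{2}$, and decompose $y' = (y' - \bar{y'}1_{n}) + \bar{y'}1_{n}$ as above. This reduces $\infty$-expansion of $V_{2t+1}$ to that of $V_{0}$ at the cost of additive and multiplicative errors of order $\|A_{t} - I\|_{\op} + \|B_{t} - I\|_{\op}$, so the effective expansion constant is $\lambda_{t} \geq \lambda - O(\|A_{t}-I\|_{\op} + \|B_{t}-I\|_{\op})$. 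This sets up a bootstrap: preserving expansion requires $\|A_{t}-I\|_{\op} + \|B_{t}-I\|_{\op} \ll \lambda$ throughout, while closing the telescoping bound above only guarantees $\|A_{t}-I\|_{\op} \lesssim \eps/\lambda$. The bootstrap therefore closes precisely when $\eps/\lambda \ll \lambda$, i.e., $\eps \ll \lambda^{2}$, matching the hypothesis. I expect the cleanest execution is a finite-horizon induction: assume by induction that $\|A_{s}-I\|_{\op}, \|B_{s}-I\|_{\op} \leq \lambda/C$ for all $s \leq t$, run the argument above to conclude $\eps_{t+1} \leq (1-\lambda/2)\eps_{t}$, and verify that the incremental $\|L_{t+1} - I\|_{\op}, \|R_{t+1} - I\|_{\op} \lesssim \eps_{t+1}$ keeps the accumulated deviation below $\lambda/C$; the hypothesis $\lambda^{2} \gtrsim \eps$ is exactly what makes the geometric sum $\sum \eps_{t} \lesssim \eps/\lambda$ fit inside the bootstrap budget.
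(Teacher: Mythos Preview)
Your approach is correct and takes a genuinely different route from the paper. The paper analyzes the \emph{continuous} gradient flow of \cref{d:frameGradFlow}: it differentiates $\|E_t\|_{\op}$ and $\|F_t\|_{\op}$ via the Envelope Theorem, obtains the coupled inequalities $-\tfrac{1}{2}\partial_t \|E\|_{\op} \ge s(\|E\|_{\op} - (1-\lambda)\|F\|_{\op}) - \|(E,F)\|_{\op}^{2}$ and $-\tfrac{1}{2}\partial_t \|F\|_{\op} \ge s(\|F\|_{\op} - \|E\|_{\op}) - \text{l.o.t.}$, and then shows the weighted potential $h(t) := \max\{(1+\lambda/4)\|E_t\|_{\op}, \|F_t\|_{\op}\}$ decays exponentially by a case analysis (\cref{p:EFInftyBothDecreaseR}). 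Your discrete Sinkhorn analysis sidesteps this potential entirely: because each half-step zeroes one of $E,F$ exactly, you need only track how the other grows and then contracts, which yields the clean recursion $\eps_{t+1} \le (1-\lambda')\eps_t + O(\eps_t^2)$ for $t \ge 1$ (the very first step incurs a harmless factor of $2$ since both $E_0$ and $F_0$ are nonzero). Both proofs invoke the same robustness of $\infty$-expansion under small scalings---your pull-out-$A_t$ argument is precisely the paper's \cref{l:frameInftyRobustness}---and both close the bootstrap at the threshold $\eps \lesssim \lambda^2$. Your route is more elementary (no ODE machinery, no Envelope Theorem, and $s(V_t)$ is exactly preserved rather than slowly shrinking), while the paper's plugs directly into the existing \cite{KLR} framework and, as they remark, lifts verbatim to operator scaling. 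One small caution: in your recursion the implicit constant in front of $(1-\lambda)$ must be $1+O(\eps_t)$, not merely $O(1)$, for geometric decay to hold when $\lambda$ is small; the first-order expansion you sketch does deliver exactly this, but the $\lesssim$ notation obscures it and is worth making explicit.
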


    % Explain somewhere the $\eps$-doubly balanced assumption, via concentration for random unit vectors

    As compared to \cref{t:KLR}, our result improves the scaling bound and expansion requirement, both by a $\log d$ factor, but we use the stronger $\infty$-expansion condition instead of quantum expansion.
    These quantitative improvements are the key to our optimal sample complexity bounds.
    % , which we show in 
    % This gives a quantitative improvement over the scaling result of\cref{app:InftyImpliesQuantumExpansion} is a stronger assumption. 
    % Therefore the theorems are not directly comparable. 
    % TODO

    In order to apply the above scaling result, we show that random instances satisfy $\infty$-expansion. 

    % It can be easily shown by concentration that random frames $V \in \R^{d \times n}$ with columns $v_{j} \sim S^{d-1}$ are nearly doubly balanced condition. 
    % In order to apply the scaling result above, we need to show that they also satisfy $\infty$-expansion. 

    \begin{theorem} \label{t:RandomFrameInfty}
        Given $v_{1}, ..., v_{n} \sim S^{d-1}$ for $n \gtrsim d$ large enough, the input $V$ satisfies $(1-\lambda)$-$\infty$-expansion for $\lambda \geq \Omega(1)$ with probability at least $1 - \exp(-\Omega(n))$. 
    \end{theorem}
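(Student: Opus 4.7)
The plan is to reduce the $\infty$-expansion supremum over $y$ and $x$ to a one-dimensional concentration problem solvable by Bernstein's inequality together with a standard net argument.

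First I rewrite the operator norm as $\|\sum_{j} y_{j} v_{j} v_{j}^{T}\|_{\op} = \sup_{x \in S^{d-1}}|\sum_{j} y_{j} \langle x, v_{j}\rangle^{2}|$, and drop the absolute value by the sign flip $y \mapsto -y$, which preserves the feasible set. Applying LP duality to the inner supremum over $y$,
\[
\sup_{y \perp 1_{n},\, \|y\|_{\infty} \leq 1}\ \sum_{j} y_{j} \langle x, v_{j}\rangle^{2} \ = \ \inf_{c \in \R}\ \sum_{j} \bigl|\langle x, v_{j}\rangle^{2} - c\bigr| \ \leq \ \sum_{j} \bigl|\langle x, v_{j}\rangle^{2} - 1/d\bigr| \ =: \ f(x),
\]
where the inequality specializes to the population mean $c = 1/d = \mathbb{E}\langle x, v_{j}\rangle^{2}$. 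Since $s(V) = n$, it suffices to show $\sup_{x \in S^{d-1}} f(x) \leq (1-\lambda) \cdot n/d$ with probability $1 - \exp(-\Omega(n))$ for some universal constant $\lambda > 0$.

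For fixed $x \in S^{d-1}$, the summands $Y_{j} := |\langle x, v_{j}\rangle^{2} - 1/d|$ are iid with mean $c_{1}/d$, where $c_{1}$ depends only on $d$ by rotational invariance. Since $\langle x, v\rangle^{2} \sim \mathrm{Beta}(1/2, (d-1)/2)$ and $d \cdot \langle x, v\rangle^{2}$ converges in distribution and in $L^{2}$ to $\chi^{2}_{1}$, we have $c_{1} \to \mathbb{E}|\chi^{2}_{1} - 1| \approx 0.968$ as $d \to \infty$; combined with explicit small-$d$ values such as $c_{1}(2) = 2/\pi$, one verifies $c_{1} \leq c^{*} < 1$ uniformly in $d$. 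Moreover $Y_{j}$ is sub-exponential with parameter $O(1/d)$ because the sphere tail satisfies $\mathbb{P}(\langle x, v\rangle^{2} > t) \leq 2\exp(-c\, t d)$. Bernstein's inequality for sub-exponential sums then yields
\[
\mathbb{P}\bigl(f(x) > (c_{1} + \delta) n/d\bigr) \leq \exp(-c \delta^{2} n)
\]
for any small constant $\delta > 0$.

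To uniformize over $x$, on the event $\|V V^{T}\|_{\op} \leq 2n/d$---which holds with probability $1 - \exp(-\Omega(n))$ whenever $n \gtrsim d$ by standard sub-Gaussian matrix concentration---the function $f$ is Lipschitz with constant $L \lesssim n/d$, since $|a^{2} - b^{2}| = |a+b|\,|a-b|$ combined with Cauchy--Schwarz gives $|f(x) - f(x')| \leq 2\|V\|_{\op}^{2}\, \|x - x'\|_{2}$. A union bound over an $\eta$-net of $S^{d-1}$ of size $(3/\eta)^{d}$ produces $\exp(d \log(3/\eta) - c \delta^{2} n) = \exp(-\Omega(n))$ provided $n \geq C(\delta,\eta) \cdot d$. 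Passing from the net to all of $S^{d-1}$ costs an additional $L \eta \lesssim \eta \cdot n/d$; fixing $\delta, \eta$ as small universal constants so that $c^{*} + \delta + O(\eta) < 1$ delivers the claim with $\lambda = \Omega(1)$.

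The main obstacle is certifying that $c_{1} \leq c^{*} < 1$ \emph{uniformly in $d$}, which controls the final constant $\lambda$. Fortunately the theorem only requires $\lambda \geq \Omega(1)$, so a rough uniform bound such as $c^{*} \leq 0.97$ (from the asymptotic limit combined with bounded values at small $d$) is more than enough. The rest is routine sub-exponential Bernstein plus an $\epsilon$-net; the zero-sum constraint $y \perp 1_{n}$ enters only through the LP duality step, where it eliminates the dominant mean contribution that would otherwise make the bound trivial.
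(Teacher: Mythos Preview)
Your LP-duality reduction to $f(x)=\sum_j|\langle x,v_j\rangle^2-1/d|$ is correct and the Bernstein/net machinery is sound, but the route is different from the paper's. The paper also passes to the vertices of $\{y\perp 1_n,\ \|y\|_\infty\le 1\}$, but then recasts $\infty$-expansion as a two-sided spectral bound on $V_BV_B^T$ over all half-subsets $B$ (the ``pseudorandom'' condition), proves this first for \emph{Gaussian} frames via a multiplicative $\chi^2$ lower-tail bound strong enough to survive a union over $\binom{n}{n/4}$ subsets, and finally shows that column normalization $v_j=g_j/\|g_j\|$ preserves pseudorandomness up to constant factors. The point of this detour is that it never needs any specific numerical constant to fall strictly below $1$: it only needs $\alpha_{\min}>0$, which follows from crude tail estimates.

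That is precisely where your proposal has a genuine gap. Everything hinges on $c_1(d)=d\,\mathbb{E}\bigl|\langle x,v\rangle^2-1/d\bigr|\le c^\ast<1$ \emph{uniformly in $d$}, and you do not prove it. Knowing that $c_1(d)\to 4\phi(1)\approx 0.968$ and that $c_1(2)=2/\pi$ does not rule out some intermediate $d$ with $c_1(d)\ge 1$; with only a $3\%$ asymptotic margin this is not a dismissible technicality. The obvious attempts fail: the Cauchy--Schwarz bound $c_1(d)\le\sqrt{\mathrm{Var}(dB_d)}=\sqrt{2(d-1)/(d+2)}$ already exceeds $1$ once $d\ge 5$, and there is no evident stochastic-domination or convexity argument yielding $\mathbb{E}|dB_d-1|\le\mathbb{E}|\chi_1^2-1|$ for all $d$ with $B_d\sim\mathrm{Beta}(1/2,(d-1)/2)$. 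Until you supply a uniform proof (for instance, monotonicity of $c_1$ in $d$, which is plausible from the data $c_1(2)\approx 0.637$, $c_1(3)\approx 0.770$, $c_1(\infty)\approx 0.968$ but not established), the argument does not close. The paper's Gaussian-then-normalize construction was designed to sidestep exactly this delicate constant.
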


    This improves on Theorem 2.10 in \cite{FM20} by showing the stronger $\infty$-expansion condition, as opposed to quantum expansion, as well as reducing the sample requirement by a $\log d$ factor. 
    Our main sample complexity result, \cref{t:mainSampleComplexity}, follows by combining the above ingredients. 

    Our approach to the algorithmic convergence result \cref{t:TylerIterativeAnalysis} also relies on $\infty$-expansion. 
    \cite{FM20} showed that quantum expansion is essentially equivalent to strong convexity of a natural potential function for frame scaling.
    Further, Tyler's iterative approach can be seen as a natural descent method for this potential function, so the convergence bound follows by standard arguments from convex analysis. 

    In \cref{t:InftyImpliesQ} we show that $\infty$-expansion implies quantum expansion. This directly implies that the convergence analysis of \cite{FM20} for Tyler's iterative procedure holds at the optimal sample threshold $n \gtrsim d$.

    \subsection{Organization}

    We first prove our sample complexity result \cref{t:mainSampleComplexity} in \cref{s:sampleComplexity}, assuming our two main technical ingredients. 
    In \cref{s:FrameInfty}, we prove \cref{t:frameMainInfty} which gives a novel scaling result for inputs satisfying $\infty$-expansion. 
    In \cref{s:RandomInfty}, we prove \cref{t:RandomFrameInfty}, showing random random frames satisfy $\infty$-expansion. 
    % In \cref{s:RandomInfty} we prove that random unit vectors satisfy $\infty$-expansion. 
    % In \cref{s:sampleComplexity} we prove our optimal sample complexity bounds by combining these two key technical contributions. 
    In \cref{s:InftyImpliesQuantumExpansion}, we prove \cref{t:TylerIterativeAnalysis}, showing convergence of Tyler's iterative procedure.
    We defer some of the technical details from these proofs to \cref{app:frameScaling}, \ref{app:RandomInfty}, and \ref{app:InftyImpliesQuantumExpansion}, respectively. 
    We conclude in \cref{s:conclusion} with some related problems. 
    % we show that $\infty$-expansion implies quantum expansion, which, combined with the analysis of \cite{FM20}, gives the convergence bound in 

\section{Optimal Sample Complexity via $\infty$-Expansion} \label{s:sampleComplexity}

    In this section, we show \cref{t:mainSampleComplexity} as a straightforward consequence of our two main technical ingredients, which are proved in the following two sections. 
    In order to apply our new frame scaling result in \cref{t:frameMainInfty}, we need the input to be $\eps$-doubly balanced. For this we apply the following standard concentration result for random unit vectors:

    \begin{theorem} [Theorem 5.14 in \cite{FM20}, Theorem 5.39 in Vershynin \cite{vershynin2010introduction}] \label{t:VershyninConcentration}
        Given $n \gtrsim \frac{d}{\eps^{2}}$ uniformly random unit vectors $v_{1}, ..., v_{n} \sim S^{d-1}$, where $\eps$ is at most a small constant, 
        \[  \bigg\| \frac{d}{n} \sum_{j \in [n]} v_{j} v_{j}^{T} - I_{d} \bigg\|_{\op} \leq \eps  \]
        with probability at least $1 - \exp( - \Omega(\eps^{2} n) )$. 
    \end{theorem}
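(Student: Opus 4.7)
The plan is a standard $\eps$-net argument combined with scalar concentration on each fixed direction. By the rotational symmetry of the sphere, $\mathbb{E}[v_j v_j^T] = \frac{1}{d} I_d$, so the random matrix $M := \frac{d}{n}\sum_j v_j v_j^T - I_d$ has mean zero. Writing $\|M\|_{\op} = \sup_{u \in S^{d-1}} u^T M u$, I would reduce the problem to a uniform-in-$u$ bound on the scalar random variables $\frac{d}{n}\sum_j \langle u, v_j \rangle^2 - 1$.

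First I would establish scalar concentration for a fixed direction $u$. The key tail estimate is that $\sqrt{d}\,\langle u, v_j \rangle$ is subgaussian with a universal constant independent of $d$. This is a standard fact about projections of the uniform sphere, which can be proved either directly from the density proportional to $(1-t^2)^{(d-3)/2}$ or via the Gaussian representation $v_j = g_j/\|g_j\|_2$ with $g_j \sim N(0, I_d)$ combined with sharp concentration of $\|g_j\|_2$ around $\sqrt{d}$. Squaring shows that $Z_j := d\langle u, v_j\rangle^2 - 1$ is mean-zero and subexponential with a universal parameter, so Bernstein's inequality yields
\[ \Pr\bigl[ \bigl|\tfrac{1}{n}\textstyle\sum_j Z_j\bigr| \geq \tfrac{\eps}{2} \bigr] \leq 2\exp(-c\eps^2 n) \]
for $\eps$ at most a small constant.

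Second I would apply a covering argument. Let $\mathcal{N}$ be a $\tfrac{1}{4}$-net of $S^{d-1}$, which by the standard volume bound satisfies $|\mathcal{N}| \leq 9^d$. A routine argument (Lemma 5.4 in Vershynin) gives $\|M\|_{\op} \leq 2 \sup_{u \in \mathcal{N}} |u^T M u|$, so union-bounding the scalar estimate over $\mathcal{N}$ gives
\[ \Pr[\|M\|_{\op} > \eps] \leq 9^d \cdot 2\exp(-c\eps^2 n) \leq \exp(d \log 9 - c\eps^2 n). \]
Choosing $n \geq C d/\eps^2$ for a sufficiently large constant $C$ absorbs the $d \log 9$ term and leaves the desired $\exp(-\Omega(\eps^2 n))$ bound.

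I expect the main technical obstacle to be obtaining a clean subexponential tail for $d\langle u, v_j\rangle^2$ with a constant that is genuinely independent of $d$; both the direct density calculation and the reduction through Gaussian normalization require care (in the latter, one conditions on $\|g_j\|_2 \in [\tfrac{1}{2}\sqrt{d}, 2\sqrt{d}]$ and handles its exponentially small complement separately). Once this uniform subexponential estimate is available, the net-plus-Bernstein framework is entirely standard, and the tradeoff between the net size $9^d$ and the Bernstein rate $\exp(-c\eps^2 n)$ directly dictates the required sample threshold $n \gtrsim d/\eps^2$.
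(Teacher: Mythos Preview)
Your proposal is correct and is essentially the standard proof of this concentration result; however, the paper does not actually prove this theorem at all. It is stated with citations to Theorem~5.14 in \cite{FM20} and Theorem~5.39 in Vershynin~\cite{vershynin2010introduction} and invoked as a black box, so there is no ``paper's own proof'' to compare against. What you have outlined---subgaussianity of $\sqrt{d}\,\langle u,v_j\rangle$, Bernstein on the resulting subexponential squares, and a $\tfrac14$-net union bound---is exactly the argument in the cited Vershynin reference, so in that sense you have reproduced the intended proof rather than found an alternative.
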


        We can now carry out the argument presented in \cref{ss:techOverview} for our sample complexity bound.

            \vspace{3mm}

    \begin{proof} [Proof of \cref{t:mainSampleComplexity}]
        By \cref{l:EllipticalReductiontoI}, we can assume the underlying distribution is $E(I_{d}, 1)$, 
        so our input frame $V \in \R^{d \times n}$ has columns distributed according to $v_{j} \sim S^{d-1}$. 
        For $n \gtrsim \frac{d}{\eps^{2}}$ large enough, \cref{t:VershyninConcentration} and \cref{t:RandomFrameInfty} imply $V$ is $\eps$-doubly balanced and satisfies $(1-\lambda)$-$\infty$-expansion for $\lambda^{2} \geq \Omega(1) \gtrsim \eps$ with probability $\geq 1 - \exp( - \Omega(\eps^{2} n))$. 
        Therefore, we can apply \cref{t:frameMainInfty} to input $V$ and bound the frame scaling solution $L$
        \[ \| L - I_{d} \|_{\op} \lesssim \frac{\eps}{\lambda} \lesssim \eps , \]
        as $\lambda \geq \Omega(1)$. 
        Finally, we can use \cref{l:EllipticalFrameCorrespondence} to bound the M-Estimator $\hat{\Sigma} = \frac{d  (L^{T} L)^{-1}}{\tr[(L^{T} L)^{-1}]}$:
        \[ \|I_{d} - \Sigma^{1/2} \hat{\Sigma}^{-1} \Sigma^{1/2} \|_{\op} \lesssim \|L - I_{d}\|_{\op} \lesssim \eps ,    \]
        where we used $\Sigma = I_{d}$, as well as the bounds $\frac{1}{d} \tr[(L^{T} L)^{-1}] = 1 + O(\|L - I_{d}\|_{\op})$ and $\|I_{d} - L^{T} L\|_{\op} \lesssim \|L - I_{d}\|_{\op}$ by Taylor approximation.
    \end{proof}

\section{Frame Scaling with $\infty$-Expansion} \label{s:FrameInfty}

The goal of this section is to prove \cref{t:frameMainInfty}, which shows optimal bounds for frame scaling when the input satisfies the $\infty$-expansion condition in \cref{d:frameInftyExpansion}. 
We follow the approach of Kwok et al \cite{KLR}, which proved a slightly weaker bound using the weaker quantum expansion condition. 
% For more discussion on the relation between these two conditions, see \cref{app:InftyImpliesQuantumExpansion}. 
Our analysis lifts to the more general operator scaling problem, but we omit this as our statistical setting only involves frame scaling. 
%matching the result of \cite{KLR} but we don't need this generalization for our sample complexity result. 

We first collect useful facts about frame scaling from \cite{KLR}. 
Then, we state the main technical lemma where we use $\infty$-expansion. 
The final ingredient is a robustness result, showing $\infty$-expansion is preserved under small scalings. 
The full proof is given in \cref{app:frameScaling}.

% This can be thought of as an infinitesimal version of the well-known Flip-Flop algorithm, described below. 

    The main character of our analysis is a dynamical system that converges to the frame scaling solution.
    It can be interpreted as a continuous version of the natural Flip-Flop algorithm, so we begin by motivating and defining this procedure. 
% We begin by motivating and defining the Flip-Flop procedure for scaling. 
    Frame scaling (\cref{d:frameScalingProblem}) involves two conditions, isotropy and equal-norm, each of which is easy to satisfy individually:
    \[ L := (V V^{T})^{-1/2} \implies L V V^{T} L^{T} = I_{d};   \quad
         % R_{jj} := 1/\|v_{j}\|_{2} \implies R_{jj} \|v_{j}\|_{2}^{2} = 1   \]
         R := \diag(V^{T} V)^{-1/2} \implies \diag(R^{T} V^{T} V R) = I_{n} . \]
    This suggests the following natural procedure, devised by Gurvits \cite{G04} in the context of operator scaling:
    \begin{equation} \label{eq:FlipFlop}
    V_{t+1} \leftarrow (V_{t} V_{t})^{-1/2} V_{t}, \qquad V_{t+2} \leftarrow V_{t+1} \diag(V_{t+1}^{T} V_{t+1})^{-1/2} .   
    \end{equation} 
    The seminal work of Gurvits et al \cite{GGOW15} showed that this simple algorithm converges to the scaling solution (even for the more general operator scaling problem). Their goal was to prove worst-case time complexity bounds, and they were able to use deep results from geometric invariant theory to analyze the above discrete algorithm.

    In our setting, as well as in \cite{KLR}, we want to prove beyond worst-case bounds for scaling when the input is already nearly doubly balanced. 
    Therefore we study the following continuous process which allows for more refined control of the trajectory, as opposed to the the Flip-Flop algorithm which may take large steps in each iteration.
    % we would like to analyze procedure which gives more refined control on the trajectory. 

\begin{definition} [Dynamical System, Def 2.16 in \cite{KLR}] \label{d:frameGradFlow}
    For frame $V \in \R^{d \times n}$ with size $s(V)$ and error matrices $E(V), F(V)$ according to \cref{d:sizeRowCol}, $V_{t}$ is the solution to the following differential equation:
    \[ - \partial_{t} V_{t} = E(V_{t}) V_{t} + V_{t} F(V_{t}) = ( d V_{t} V_{t}^{T} - s(V_{t}) I_{d} ) V_{t} + V_{t} \diag( n V_{t}^{T} V_{t} - s(V_{t}) I_{n} ), \qquad V_{0} = V .     \]
    % \[ - \partial_{t} V_{t} = E(V) V + V F(V) = \bigg( d \sum_{j \in [n]} v_{j} v_{j}^{T} - s I_{d} \bigg) V + V \diag\{ n \|v_{j}\|_{2}^{2} - s \} .     \]
\end{definition}

Intuitively, the first term $E(V) V$ pushes the frame towards isotropy, and the second term $V F(V)$ drives towards equal norms. This can be seen as a continuous and simultaneous version of Flip-Flop.
% This dynamical system can be motivated as a continuous and simultaneous version of the natural Flip-Flop algorithm for frame and operator scaling (see \cref{s:InftyImpliesQuantumExpansion}). 
% , which attempts to decrease the error matrices $E,F$ towards a doubly balanced frame.
% In \cite{KLR}, it was also shown that this is the Euclidean gradient flow of the error function $\Delta(V)$. 
% By deep results in geometric invariant theory and algebraic geometry, this can also be seen as a gradient flow on scalings for a \emph{geodesically convex} formulation for frame scaling. 
% We do not require these connections for our analysis, but point the reader to \cite{Towards} and \cite{AkshaysThesis}, which give a principled derivation for the dynamical system and some key properties. 

It turns out that the above dynamical system automatically produces a frame scaling solution! The following result from \cite{KLR} allows us to control the scaling solutions throughout the trajectory. 
% using the theory of product integration
% can equivalently be seen as the geodesic gradient flow for scalings. 

    \begin{lemma} [Corollary 3.12 and 3.15 in \cite{KLR}] \label{l:scalingDynamics} \label{l:scalingProdIntBound}
        The solution to the dynamical system in \cref{d:frameGradFlow} produces a frame scaling of the form $V_{t} = L_{t} V R_{t}$, where $L_{t} \in \R^{d \times d}, R_{t} \in \diag(n)$. 
        % \[ L_{T} := \prod_{t=0}^{T} (I_{d} + E(V_{t})), \qquad \text{and} 
        % \qquad R_{T} := (I_{n} + F(V_{t})) \prod_{t=0}^{T} . \]
        Further, these scalings can be bounded as 
    \[ \|L_{T} - I_{d}\|_{\op} \leq \exp \bigg( \int_{0}^{T} \|E(V_{t})\|_{\op} \bigg) - 1 ; \quad 
        \|R_{T} - I_{d}\|_{\op} \leq \exp \bigg( \int_{0}^{T} \|F(V_{t})\|_{\op} \bigg) - 1 .    \]    
    \end{lemma}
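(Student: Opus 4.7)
The plan is to construct $L_t$ and $R_t$ as solutions to auxiliary linear ODEs driven by the trajectory $V_t$, verify via the product rule and ODE uniqueness that $L_t V R_t = V_t$, and then bound these scalings using a Gronwall-style argument. This is a fairly standard ODE computation, and the main delicacy lies in handling the non-commutativity of $E(V_t)$ across different times, which prevents simply writing $L_t$ as a single matrix exponential.

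With $V_t$ denoting the solution to the dynamical system of \cref{d:frameGradFlow}, I would introduce $L_t \in \R^{d \times d}$ and $R_t$ as the unique solutions to the linear initial value problems
\[ \partial_t L_t = -E(V_t) L_t, \quad L_0 = I_d, \qquad \partial_t R_t = -R_t F(V_t), \quad R_0 = I_n. \]
Both are linear matrix ODEs with smooth time-varying coefficients (read off from the already-solved trajectory $V_t$), so unique global solutions exist. Because $F(V_t)$ is diagonal and $R_0 = I_n$, the second ODE preserves diagonality, giving $R_t \in \diag(n)$ for all $t$. Set $W_t := L_t V R_t$; the product rule together with the two defining ODEs yields
\[ \partial_t W_t = -E(V_t) L_t V R_t - L_t V R_t F(V_t) = -E(V_t) W_t - W_t F(V_t). \]
Since $W_0 = V = V_0$, both $W_t$ and $V_t$ satisfy the same linear time-varying ODE $\partial_t X_t = -E(V_t) X_t - X_t F(V_t)$ with coefficients depending only on the a priori trajectory $V_t$ and the same initial data. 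By uniqueness $W_t = V_t$, so $V_t = L_t V R_t$ as required.

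For the norm bounds I would argue in two steps. First, writing the ODE for $L_t$ in integral form as $L_t = I_d - \int_0^t E(V_s) L_s \, ds$ and applying submultiplicativity gives $\|L_t\|_{\op} \leq 1 + \int_0^t \|E(V_s)\|_{\op} \|L_s\|_{\op} ds$; Gronwall then yields $\|L_t\|_{\op} \leq \exp\bigl(\int_0^t \|E(V_s)\|_{\op} ds\bigr)$. Substituting this back into the integral identity for $L_t - I_d$ and recognizing the integrand as an exact derivative of an exponential,
\[ \|L_T - I_d\|_{\op} \leq \int_0^T \|E(V_s)\|_{\op} \exp\Bigl(\int_0^s \|E(V_r)\|_{\op} dr\Bigr) ds = \exp\Bigl( \int_0^T \|E(V_s)\|_{\op} ds \Bigr) - 1. \]
The bound for $R_T$ follows by the identical argument applied to the ODE $\partial_t R_t = -R_t F(V_t)$ and the quantity $\|F(V_t)\|_{\op}$. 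This product-integral Gronwall step is the one part of the proof requiring care; every other step is routine ODE uniqueness or elementary integration.
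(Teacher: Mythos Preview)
Your proof is correct and is the standard product-integral/Gronwall argument for this kind of statement. The paper does not actually supply its own proof here; it imports the result directly from \cite{KLR} (their Corollaries 3.12 and 3.15), and the argument in \cite{KLR} is essentially the one you wrote: define $L_t,R_t$ as solutions to the linear ODEs driven by $E(V_t),F(V_t)$, verify $V_t=L_tVR_t$ by uniqueness, and then bound the scalings via Gronwall.
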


    Therefore, in order to prove strong scaling bounds, it suffices to show that the error $(E(V), F(V))$ decreases quickly. The following lemma is the key new step in our analysis, where we use $\infty$-expansion condition to show convergence of the error. 
    % throughout the dynamical system. 
    % In the following subsection, we show how $\infty$-expansion can be used to control the error and scaling solution.  

    \begin{lemma} [Informal] \label{p:EFInftyBothDecrease}
        For $\eps$-doubly balanced $V \in \R^{d \times n}$ satisfying $(1-\lambda)$-$\infty$-expansion for $\lambda \gtrsim \eps$,
        \begin{align*}
        -\partial_{t=0} \|(E_{t}, F_{t})\|_{\op} \gtrsim s(V) \cdot \lambda \cdot \|(E_{t}, F_{t})\|_{\op} .
        \end{align*}
    \end{lemma}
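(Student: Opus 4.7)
The plan is to compute $-\partial_{t=0}\|(E_t, F_t)\|_{\op}$ by differentiating $E_t$ and $F_t$ along the dynamical system of \cref{d:frameGradFlow}, applying Danskin's theorem to pass from the operator norm to a fixed eigenvector, and using $\infty$-expansion to bound the cross coupling between $E$ and $F$. Starting from $-\partial_t V_t = E_tV_t + V_tF_t$, a direct calculation using $V V^T = (E+sI)/d$ and $\partial_t s = -2\Delta(V)$ yields
\begin{align*}
-\partial_t E_t &= 2E_t^2 + 2sE_t + 2dV_tF_tV_t^T - 2\Delta(V_t)I_d, \\
-\partial_t F_t &= 2n\diag(V_t^T E_t V_t) + 2F_t^2 + 2sF_t - 2\Delta(V_t)I_n.
\end{align*}
Each has a self-damping term ($2sE$ or $2sF$) that drives its operator norm down at rate $2s$, plus a cross-coupling term that must be controlled.

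Next I apply Danskin at a unit extremal eigenvector $u^*$ of $E$ with sign $\sigma$ and at the extremal diagonal index $j^*$ of $F$ with sign $\tau$, obtaining
\begin{align*}
-\partial_t\|E\|_{\op} &= 2s\|E\|_{\op} + 2d\sigma\, u^{*T}VFV^T u^* + O(s\eps\,\|E\|_{\op} + s\eps\,\|F\|_{\op}), \\
-\partial_t\|F\|_{\op} &= 2s\|F\|_{\op} + 2n\tau\, v_{j^*}^T E v_{j^*} + O(s\eps\,\|E\|_{\op} + s\eps\,\|F\|_{\op}),
\end{align*}
where the quadratic self-terms and $\Delta(V) \leq \|E\|_{\op}^2 + \|F\|_{\op}^2$ are absorbed into the error using $\eps$-double-balance. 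For the $E$-side cross term, $F$ is diagonal with $\tr F = 0$ and $\|\diag F\|_\infty = \|F\|_{\op}$, so applying \cref{d:frameInftyExpansion} to $y = F/\|F\|_{\op}$ yields $\|VFV^T\|_{\op} \leq s(1-\lambda)\|F\|_{\op}/d$, whence $|2d u^{*T}VFV^T u^*| \leq 2s(1-\lambda)\|F\|_{\op}$. In the case $\|E\|_{\op} \geq \|F\|_{\op}$, this gives $-\partial_t\|E\|_{\op} \geq 2s\lambda\|E\|_{\op} - O(s\eps)\|E\|_{\op} \gtrsim s\lambda\|E\|_{\op}$ for $\lambda \gtrsim \eps$, which is the desired bound on $\|(E,F)\|_{\op}$.

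The main obstacle is the symmetric case $\|F\|_{\op} \geq \|E\|_{\op}$, which demands the analogous bound $|2n v_{j^*}^T E v_{j^*}| \leq 2s(1-\lambda)\|E\|_{\op} + O(s\eps\,\|E\|_{\op})$. Forward $\infty$-expansion directly controls $\|V\diag(y)V^T\|_{\op}$ for mean-zero $y$ but only indirectly controls $\diag(V^T M V)$ for traceless symmetric $M$. My plan is to (i) apply $\infty$-expansion to $y = e_{j^*} - (1/n)1_n$ to write $v_{j^*} v_{j^*}^T = (1/n) V V^T + R_{j^*}$ with $\|R_{j^*}\|_{\op} \leq s(1-\lambda)/d$, obtaining $n v_{j^*}^T E v_{j^*} = \|E\|_F^2/d + n\tr(E R_{j^*})$; and (ii) exploit $\tr E = 0$ to pair $E$ with the trace-shifted $R_{j^*} - cI$ for the optimal scalar $c$, so that $|\tr(E R_{j^*})| \leq \|E\|_{\op} \cdot \min_c \|R_{j^*} - cI\|_*$, and then control this centered trace norm using the rank structure of $R_{j^*}$ (dominated by the rank-one $v_{j^*}$ direction together with a small near-scalar part) in combination with its $\infty$-expansion operator-norm bound. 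Establishing this dual control with the correct $(1-\lambda)$ factor is the key novel step; once it is in place, applying Danskin to $\max(\|E_t\|_{\op}, \|F_t\|_{\op})$ combines both cases into the claimed $-\partial_t\|(E_t, F_t)\|_{\op} \gtrsim s\lambda\,\|(E_t, F_t)\|_{\op}$.
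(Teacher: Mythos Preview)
Your treatment of the $E$-side is correct and matches the paper: differentiating along the flow, passing to the top eigenvector via the envelope theorem, and bounding the cross term $d\,u^{*T}VFV^Tu^*$ by $s(1-\lambda)\|F\|_{\op}$ using $\infty$-expansion is exactly the mechanism in \cref{l:EFInftyBound}.

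The gap is on the $F$-side. You need $|n\,v_{j^*}^T E v_{j^*}| \le s(1-\lambda)\|E\|_{\op}$ up to $O(s\eps)$ error, and your plan (i)+(ii) does not deliver this. Writing $v_{j^*}v_{j^*}^T=\tfrac{1}{n}VV^T+R_{j^*}$ and centering, the dominant part of $R_{j^*}-cI$ is $v_{j^*}v_{j^*}^T-\tfrac{\|v_{j^*}\|^2}{d}I$, whose trace norm is $\approx 2\|v_{j^*}\|^2\approx 2s/n$; pairing with $\|E\|_{\op}$ gives $|n\tr(ER_{j^*})|\lesssim 2s\|E\|_{\op}$, i.e.\ no $(1-\lambda)$ gain. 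Pairing the other way, $\|E\|_*\cdot\|R_{j^*}\|_{\op}$, loses a factor $d$. More conceptually, $\infty$-expansion is a one-sided condition on $y\mapsto \sum_j y_j v_jv_j^T$ and does not control the ``adjoint'' map $M\mapsto \diag(V^TMV)$ with a $(1-\lambda)$ factor: nothing prevents a single $v_{j^*}$ from aligning with the top eigenvector of $E$, in which case $n\,v_{j^*}^T E v_{j^*}\approx s\|E\|_{\op}$ exactly. So the dual bound you are aiming for is not available.

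The paper avoids this entirely. For the $F$-side it uses only the crude estimate $|n\,v_{j^*}^T E v_{j^*}|\le (s+\|F\|_{\op})\|E\|_{\op}$, obtaining $-\tfrac12\partial_t\|F\|_{\op}\ge s(\|F\|_{\op}-\|E\|_{\op})-O(\|(E,F)\|_{\op}^2)$ with no $\lambda$. The $\lambda$-rate is then recovered not pointwise on $\|(E,F)\|_{\op}$ but on the \emph{tilted} potential $h(t)=\max\{(1+\delta)\|E_t\|_{\op},\|F_t\|_{\op}\}$ with $\delta\asymp\lambda$: when $(1+\delta)\|E\|\ge\|F\|$, the $E$-bound gives rate $\gtrsim s(\lambda-\delta)$; when $\|F\|>(1+\delta)\|E\|$, the crude $F$-bound already gives rate $\gtrsim s\delta$. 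Choosing $\delta=\lambda/4$ balances the two. In particular, the informal statement should be read as decay of this weighted maximum, not of $\max\{\|E\|_{\op},\|F\|_{\op}\}$ directly; your symmetric case split cannot close without that tilt.
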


    In other words, $\infty$-expansion implies operator norm error decreases at an exponential rate. This is analogous to Prop 3.9 in \cite{KLR} which shows exponential convergence of $\Delta$ using quantum expansion. 
    This justifies \cref{d:frameInftyExpansion} as we are able to use the stronger operator norm condition to prove stronger operator norm bounds. 

    % By choosing $\delta$ appropriately, we can show that one of the errors should always decrease exponentially, 
    % This shows that the error decreases exponentially as long as we have sufficient $\infty$-expansion and the size is not too small. 
    The final step is to show that $\infty$-expansion is maintained throughout the dynamical system: 
    % We show the following robustness of $\infty$-expansion. 

  \begin{lemma} [Analogue of Lemma 3.20 in \cite{KLR}] \label{l:frameInftyRobustness}
   Let $V \in \R^{d \times n}$ be $\eps$-doubly balanced for $\eps \leq 1/2$, and consider scaling $U = L V R$ with $\delta := \max\{ \|L - I_{d}\|_{\op}, \|R - I_{n}\|_{\op}\} \leq 1/2$. If $V$ satisfies $(1-\lambda)$-$\infty$-expansion, then $U$ satisfies $(1-\lambda')$-$\infty$-expansion with
   % \[ \sup_{y \perp 1, \|y\|_{\infty} \leq 1} \| U Y U^{T}\|_{\op} \leq s(V) (1 - \lambda) + O(\delta) .    \]
   % As a consequence, 
   \[ s(U) (1 - \lambda') \leq s(V) (1 - \lambda) + O(\delta)
   \implies \lambda' \geq 1 - \frac{s(V) (1 - \lambda) + O(\delta)}{s(U)}  . \]
  \end{lemma}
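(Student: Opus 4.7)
The plan is to directly expand $\bigl\| \sum_j y_j u_j u_j^* \bigr\|_{\op}$ for admissible test vectors $y \perp 1_n$ with $\|y\|_\infty \leq 1$, and reduce the bound to the corresponding quantity for $V$ via a careful tracking of first-order perturbations in $L$ and $R$. Writing $u_j = R_{jj} L v_j$ we have
\[ \sum_{j \in [n]} y_j u_j u_j^* \;=\; L \biggl( \sum_{j \in [n]} y_j R_{jj}^2 \, v_j v_j^* \biggr) L^*, \]
so using $\|L\|_{\op} \leq 1+\delta$ it suffices to bound the inner sum in operator norm and then multiply by $(1+\delta)^2 = 1 + O(\delta)$.

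The main difficulty is that the reweighted coefficients $y_j R_{jj}^2$ are no longer orthogonal to $1_n$, so we cannot feed them directly into the $\infty$-expansion condition for $V$. To handle this, I would write $R_{jj}^2 = 1 + r_j$ with $|r_j| \leq O(\delta)$ (from $\|R - I_n\|_{\op} \leq \delta \leq 1/2$), and split
\[ y_j R_{jj}^2 \;=\; y_j \;+\; z_j \;+\; c, \qquad c := \tfrac{1}{n} \sum_{k} y_k r_k, \quad z_j := y_j r_j - c. \]
Then $y \perp 1_n$ by assumption, $z \perp 1_n$ by construction, $\|z\|_\infty \leq O(\delta)$, and $|c| \leq O(\delta)$. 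The contribution from $y$ is bounded by $\infty$-expansion of $V$ as $\frac{s(V)(1-\lambda)}{d}$; the contribution from $z$ is bounded, after rescaling by $O(\delta)$ to have unit $\ell_\infty$ norm, by $O(\delta) \cdot \frac{s(V)(1-\lambda)}{d}$; and the constant piece gives $|c| \cdot \|V V^*\|_{\op} \leq O(\delta) \cdot \frac{s(V)(1+\eps)}{d} = O(\delta) \cdot \frac{s(V)}{d}$ by the $\eps$-doubly balanced hypothesis and $\eps \leq 1/2$.

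Combining, the inner sum has operator norm at most $\frac{s(V)(1-\lambda) + O(\delta \cdot s(V))}{d}$, and multiplying by the $(1 + O(\delta))$ factor from $\|L\|_{\op}^2$ absorbs the $(1-\lambda)$ term into the $O(\delta)$ error. This gives
\[ \biggl\| \sum_{j \in [n]} y_j u_j u_j^* \biggr\|_{\op} \;\leq\; \frac{s(V)(1-\lambda) + O(\delta)}{d}, \]
where the $O(\delta)$ here hides the universal $s(V)$ factor under the normalization convention of the paper. Matching this against the definition $\bigl\| \sum_j y_j u_j u_j^* \bigr\|_{\op} \leq \frac{s(U)(1-\lambda')}{d}$ yields the claimed bound on $s(U)(1-\lambda')$ and the stated lower bound on $\lambda'$.

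The bookkeeping step I expect to be the trickiest is ensuring that all constants propagate correctly and that the $(1+\delta)^2$ inflation from $\|L\|_{\op}^2$ combines cleanly with the $O(\delta)$ slack from the decomposition of $y_j R_{jj}^2$, so that the final bound is additive in $\delta$ rather than multiplicative against $(1-\lambda)$. This is essentially the same robustness calculation as Lemma 3.20 of \cite{KLR}, but adapted to the operator norm / $\ell_\infty$ test vector setup of \cref{d:frameInftyExpansion}.
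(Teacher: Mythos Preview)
Your proposal is correct and follows essentially the same approach the paper intends: the paper omits the proof entirely, stating only that it is identical to Lemma~3.20 of \cite{KLR} with $\|\cdot\|_F$ replaced by $\|\cdot\|_{\op}$, and your perturbation argument (factor out $L$, split the reweighted coefficients $y_j R_{jj}^2$ into a mean-zero piece plus a constant, apply $\infty$-expansion and the $\eps$-doubly balanced bound respectively) is exactly that calculation carried out in the $\ell_\infty/\|\cdot\|_{\op}$ setting. Your observation that the $O(\delta)$ in the lemma statement implicitly carries an $s(V)$ factor is also correct; the paper applies the lemma only after normalizing $s(V)=1$ in the proof of \cref{t:frameMainInftyR}, which is why the discrepancy does not matter downstream.
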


  We omit the proof as it is identical to that of Lemma 3.20 in \cite{KLR}, replacing $\|\cdot\|_{F}$ for quantum expansion with $\|\cdot\|_{\op}$ for $\infty$-expansion. 
  % showing that quantum expansion is preserved under scaling, exchanging  for . 

    \cref{t:frameMainInfty} now follows by combining these ingredients. Namely, \cref{p:EFInftyBothDecrease} shows fast convergence of the error; by \cref{l:scalingProdIntBound}, this implies the scaling cannot become too large; finally, \cref{l:frameInftyRobustness} implies that this maintains $\infty$-expansion. Therefore, we should have fast convergence for all time, which implies strong scaling bounds by \cref{l:scalingProdIntBound}. 
  The formal proof is given in \cref{app:frameScaling}.

\section{$\infty$-Expansion for Random Frames} \label{s:RandomInfty}

In this section, we show random unit vector frames satisfy $\infty$-expansion with high probability. 
Our proof strategy is as follows: first, we reduce $\infty$-expansion to a simpler `pseudorandom' condition involving column subsets;
next, for technical reasons, we first show the pseudorandom condition for Gaussian frames;
finally, we show that this property is maintained after normalization $v_{j} = \frac{g_{j}}{\|g_{j}\|_{2}}$, which proves pseudorandomness for random unit vectors.

We begin by reducing our expansion condition to a simpler condition involving column subsets. Recall that \cref{d:frameInftyExpansion} involves a bound on $\|\sum_{j} y_{j} v_{j} v_{j}^{T}\|_{\op}$ for all $y \perp 1_{n}, \|y\|_{\infty} \leq 1$. 
We instead consider a simpler condition involving only column subsets, i.e. $y \in \{0,1\}^{n}$. 

\begin{definition} [Pseudorandom condition] \label{d:framePseudo}
    Frame $V \in \R^{d \times n}$ is $(\alpha_{\min}, \alpha_{\max}, \beta)$-pseudorandom if
    \[ \forall |B| = \beta n: \quad \beta \frac{\alpha_{\min}}{d} I_{d} \preceq V_{B} V_{B}^{T} \preceq \beta \frac{\alpha_{\max}}{d} I_{d}  .  \]
    We use $(\alpha_{\min}, \beta)$-pseudorandomness to denote that only the lower bound condition is satisfied. 
\end{definition}

    This condition is related to, and slightly weaker than, the pseudorandom condition defined in \cite{KLLR}, which was used to prove strong bounds for the Paulsen problem in frame theory.

We next show that $\beta=1/2$ pseudorandomness and $\infty$-expansion are essentially equivalent. We defer the proof to \cref{app:RandomInfty}.
% We will consider pseudorandomness for other $\beta$ later in this section. 

\begin{lemma} \label{l:pseudoInftyRelation}
Let $V \in \R^{d \times n}$ be an $\eps$-doubly balanced frame.
If $V$ $(\alpha_{\min}, \alpha_{\max}, \frac{1}{2})$-pseudorandom, then $V$ satisfies $(1-\lambda)$-$\infty$-expansion with
\[ s(V) (1-\lambda) \leq \min \{ s(V) (1+\eps) - \alpha_{\min} \quad , \quad \alpha_{\max} - s(V) (1-\eps) \}  .  \]
Conversely, if $V$ satisfies $(1-\lambda)$-$\infty$-expansion, then it is $(\alpha_{\min}, \alpha_{\max}, \frac{1}{2})$-pseudorandom with
\[ s(V) (\lambda - \eps) \leq \alpha_{\min}  \leq  \alpha_{\max} \leq s(V) (2 - (\lambda - \eps)) .    \]
\end{lemma}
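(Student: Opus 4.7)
The strategy exploits a tight correspondence: balanced sign vectors $y = 1_B - 1_{B^c}$ with $|B| = n/2$ are simultaneously (i) the extreme points of the polytope $\{y \perp 1_n : \|y\|_\infty \le 1\}$ and (ii) precisely the test vectors encoding $\beta = 1/2$ pseudorandomness. The $\eps$-doubly-balanced hypothesis will bridge these worlds via $V_B V_B^T + V_{B^c} V_{B^c}^T = V V^T \approx \frac{s(V)}{d} I_d$.

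For the forward direction, the map $y \mapsto \|\sum_j y_j v_j v_j^T\|_{\op}$ is convex in $y$, so its supremum over the polytope above is attained at an extreme point. The plan is to check that (for $n$ even) these extreme points are exactly the balanced sign vectors $y = 1_B - 1_{B^c}$ with $|B| = n/2$ (the $n$ odd case is analogous with one additional zero coordinate, and only perturbs the bound by lower-order terms). For such $y$, I would use the two identities
\[ \sum_j y_j v_j v_j^T = 2 V_B V_B^T - V V^T = V V^T - 2 V_{B^c} V_{B^c}^T   \]
and apply Weyl's inequality to each representation: the first yields $\lambda_{\max} \le \frac{\alpha_{\max} - s(V)(1-\eps)}{d}$, using pseudorandomness on $V_B$ together with $\eps$-double-balancedness on $V V^T$, and the second yields $\lambda_{\max} \le \frac{s(V)(1+\eps) - \alpha_{\min}}{d}$. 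Since both bound the same matrix, the minimum applies, and the analogous bound on $-\lambda_{\min}$ follows by the $B \leftrightarrow B^c$ symmetry of the hypotheses.

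For the converse, fix any $|B| = n/2$ and apply $(1-\lambda)$-$\infty$-expansion to $y = 1_B - 1_{B^c}$ to obtain $\|V_B V_B^T - V_{B^c} V_{B^c}^T\|_{\op} \le s(V)(1-\lambda)/d$. Rewriting $V_B V_B^T = \frac{1}{2}\bigl(V V^T + (V_B V_B^T - V_{B^c} V_{B^c}^T)\bigr)$ and using the doubly balanced bounds on $V V^T$ immediately gives $\frac{s(V)(\lambda - \eps)}{2d} I_d \preceq V_B V_B^T \preceq \frac{s(V)(2 - (\lambda - \eps))}{2d} I_d$, which matches the $\beta = 1/2$ normalization in \cref{d:framePseudo} with $\alpha_{\min} = s(V)(\lambda - \eps)$ and $\alpha_{\max} = s(V)(2 - (\lambda - \eps))$.

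The main subtlety is the appearance of \emph{minimum} rather than maximum in the forward bound: each Weyl argument in isolation gives only one of the two bounds, but because the rewritings $2 V_B V_B^T - V V^T$ and $V V^T - 2 V_{B^c} V_{B^c}^T$ describe literally the same matrix, both bounds hold simultaneously and the tighter one controls the norm. This coupling is where the $\eps$-doubly-balanced hypothesis is essential, and it is what makes the forward and converse statements of the lemma quantitatively tight.
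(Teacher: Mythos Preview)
Your proposal is correct and follows essentially the same approach as the paper: both proofs reduce to the extreme points $y = 1_B - 1_{B^c}$ of the polytope $\{y \perp 1_n : \|y\|_\infty \le 1\}$, use the two equivalent representations $2V_B V_B^T - VV^T = VV^T - 2V_{B^c}V_{B^c}^T$, and combine the pseudorandom bounds on $V_B V_B^T$ with the $\eps$-doubly-balanced bounds on $VV^T$. The only cosmetic difference is that the paper fixes a direction $x \in S^{d-1}$ and works with the linear functional $a_j = \langle x, v_j\rangle^2$ (so the extreme-point reduction becomes $\sup_{y \in P}\langle a, y\rangle = 2\max_A \langle a, 1_A\rangle - \langle a, 1_n\rangle = \langle a, 1_n\rangle - 2\min_B \langle a, 1_B\rangle$), whereas you phrase the same step via Weyl's inequality on the operator norm; these are equivalent packagings of the same argument.
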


We would like to apply concentration to prove the pseudorandom condition for random unit vectors. 
This would require a union bound over all subsets ${[n] \choose \beta n}$, which for $\beta=1/2$ has cardinality $\exp(\Omega(n))$. The failure probability $\exp(- \eps^{2} n)$ in \cref{t:VershyninConcentration} is too high for this approach. 
% I.e. this concentration does not give any non-trivial result for either side for all subsets. 

As a workaround, we note that the uniform distribution $v \sim S^{d-1}$ is equivalent to first sampling a random Gaussian vector $g \sim N(0,I_{d})$, and then normalizing $v \leftarrow g/\|g\|_{2}$. 
It turns out that we can use a slightly stronger concentration bound to show the pseudorandom condition for Gaussians. 

\begin{theorem} \label{p:GaussianPseudo}
    Gaussian frame $G \in \R^{d \times n}$, with entries $G_{ij} \sim N(0,\frac{1}{nd})$, is $(\alpha_{\min} \geq \Omega(1), \alpha_{\max} \leq O(1), \beta = \frac{1}{4})$-pseudorandom according to \cref{d:framePseudo}.
\end{theorem}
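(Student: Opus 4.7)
The plan is to fix a subset $B \subseteq [n]$ with $|B| = n/4$, prove that both extremal eigenvalues of $G_B G_B^T$ are of order $1/d$ with probability $1 - \exp(-\Omega(n))$, and then take a union bound over the $\binom{n}{n/4} \leq \exp(H(1/4)\, n)$ subsets, where $H(1/4) \approx 0.56$ is the binary entropy in nats. After the rescaling $M := \sqrt{nd}\, G_B \in \R^{d \times n/4}$, the entries of $M$ are i.i.d.\ standard Gaussian, $G_B G_B^T = MM^T/(nd)$, and the pseudorandom condition with $\beta = 1/4$ and $\alpha_{\min}, \alpha_{\max} = \Theta(1)$ reduces to showing $c_- n \leq s_{\min}(M)^2 \leq s_{\max}(M)^2 \leq c_+ n$ for universal constants $c_\pm > 0$, uniformly over $B$.

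The upper bound on $s_{\max}(M)$ follows from standard Davidson--Szarek concentration: $P(s_{\max}(M) \geq \sqrt{n/4} + \sqrt{d} + t) \leq \exp(-t^2/2)$, so choosing $t := C_0 \sqrt{n}$ for a sufficiently large absolute constant $C_0$ yields $s_{\max}(M) \leq O(\sqrt{n})$ with failure probability $\exp(-C_0^2 n/2)$, which beats the $\exp(H(1/4)\, n)$ subset union bound. The lower bound is the real obstacle: the symmetric Davidson--Szarek estimate $s_{\min}(M) \geq \sqrt{n/4} - \sqrt{d} - t$ becomes vacuous precisely at $t \asymp \sqrt{n}$, which is the fluctuation scale forced on us by the subset union bound, so Gaussian Lipschitz concentration alone cannot provide a useful lower bound.

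To circumvent this I would run an $\eta$-net argument using the sharp chi-squared lower tail. For any $\tilde x \in S^{d-1}$, $\|M^T \tilde x\|_2^2 \sim \chi^2_{n/4}$, and the Chernoff estimate for chi-squared gives
\[ P\bigl(\chi^2_k \leq t\,k\bigr) \;\leq\; \exp\!\Bigl(-\tfrac{k}{2}\bigl(\ln(1/t) - (1-t)\bigr)\Bigr), \qquad 0 < t < 1. \]
This exponent grows logarithmically in $1/t$, so choosing $t = t_0$ a sufficiently small absolute constant makes the per-direction failure probability $\exp(-c n)$ for $c$ arbitrarily large. Taking an $\eta$-net $N \subseteq S^{d-1}$ of size $|N| \leq (3/\eta)^d$ and combining with the $s_{\max}$ bound above, the standard net-to-singular-value conversion $s_{\min}(M) \geq \min_{\tilde x \in N} \|M^T \tilde x\|_2 - \eta\, s_{\max}(M)$ yields $s_{\min}(M) \geq \Omega(\sqrt{n})$ for $\eta$ chosen small relative to $\sqrt{t_0}$.

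The main difficulty is calibrating the three competing exponents---the $\exp(H(1/4)\, n)$ subset entropy, the $\exp(O_\eta(d))$ net size, and the per-direction chi-squared tail---so their product is $\exp(-\Omega(n))$. This forces $n \gtrsim d$ with a (possibly large) implicit constant so the net factor is absorbed into $\exp(o(n))$, and requires $t_0$ small enough that the exponent $(n/4)(\ln(1/t_0) - (1 - t_0))/2$ strictly exceeds $H(1/4)\, n$. The price of using the deep chi-squared tail instead of Gaussian Lipschitz concentration is that the resulting $\alpha_{\min}, \alpha_{\max}$ are $\Theta(1)$ but not close to $1$---which is exactly what the theorem asks for.
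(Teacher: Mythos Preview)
Your proposal is correct and follows essentially the same approach as the paper: Davidson--Szarek for the upper singular value, an $\eta$-net combined with the deep chi-squared lower tail for the lower singular value (precisely because the symmetric Lipschitz bound becomes vacuous at the fluctuation scale forced by the subset union bound), and then a union bound over the $\binom{n}{\beta n}$ subsets. The paper packages the per-subset spectral bounds into a separate lemma and states the result for general $\beta \leq 1/2$ with $\alpha_{\min} \gtrsim \beta^{O(1)}$ and $\alpha_{\max} \lesssim 1+\log(1/\beta)$, but the ideas and the calibration of exponents are exactly as you describe.
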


We also have that normalization does not affect the pseudorandom property too much. 

\begin{lemma} \label{l:normalizationPseudo}
    For frame $G$ that is $(\alpha_{\min}, \alpha_{\max}, \beta)$-pseudorandom, normalized frame $v_{j} = \frac{g_{j}}{\|g_{j}\|_{2}}$ has size $s(V) = n$ and is $(\alpha_{\min}(V), 2 \beta)$-pseudorandom for 
    \[ \alpha_{\min}(V) \geq s(V) \frac{\alpha_{\min}}{2 \alpha_{\max}} .    \]
\end{lemma}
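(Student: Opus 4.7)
\textbf{Proof plan for \cref{l:normalizationPseudo}.}

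The claim $s(V) = n$ is immediate since $\|v_j\|_2 = 1$ for all $j$, giving $s(V) = \sum_j \|v_j\|_2^2 = n$. The main content is the $(\alpha_{\min}(V), 2\beta)$-pseudorandomness. The plan is a standard ``throw away the top half'' argument, where we control the large-norm Gaussians using the upper pseudorandom bound $\alpha_{\max}$, and then invoke the lower bound $\alpha_{\min}$ on the remaining ``small-norm'' half.

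Concretely, fix an arbitrary subset $B' \subseteq [n]$ with $|B'| = 2\beta n$. Partition $B' = B \sqcup B''$, where $B$ consists of the $\beta n$ indices $j \in B'$ with the smallest values of $\|g_j\|_2^2$, and $B''$ consists of the $\beta n$ largest. Since $v_j v_j^T = g_j g_j^T / \|g_j\|_2^2$ is PSD, dropping the indices in $B''$ only decreases the sum in the L\"owner order, so
\[
V_{B'} V_{B'}^T \;=\; \sum_{j \in B'} \frac{g_j g_j^T}{\|g_j\|_2^2} \;\succeq\; \sum_{j \in B} \frac{g_j g_j^T}{\|g_j\|_2^2}.
\]
To lower bound this by a scaled identity, the natural move is to uniformly upper bound the denominators. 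Let $M := \max_{j \in B} \|g_j\|_2^2$. Then
\[
\sum_{j \in B} \frac{g_j g_j^T}{\|g_j\|_2^2} \;\succeq\; \frac{1}{M} \sum_{j \in B} g_j g_j^T \;=\; \frac{1}{M} \, G_B G_B^T \;\succeq\; \frac{1}{M} \cdot \beta \frac{\alpha_{\min}}{d} I_d,
\]
where the last step uses $(\alpha_{\min}, \beta)$-pseudorandomness of $G$ on the size-$\beta n$ set $B$.

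It remains to show $M \leq \alpha_{\max}/n$; this is where the upper half $B''$ does its job. By construction, every index in $B''$ has squared norm at least $M$, so $|B''| \cdot M \leq \sum_{j \in B''} \|g_j\|_2^2 = \tr(G_{B''} G_{B''}^T) \leq \tr(\beta \alpha_{\max} I_d / d) = \beta \alpha_{\max}$, using $(\alpha_{\max}, \beta)$-pseudorandomness on $B''$. Since $|B''| = \beta n$, this rearranges to $M \leq \alpha_{\max}/n$. Substituting back,
\[
V_{B'} V_{B'}^T \;\succeq\; \frac{n}{\alpha_{\max}} \cdot \beta \frac{\alpha_{\min}}{d} I_d \;=\; 2\beta \cdot \frac{n \alpha_{\min}/(2\alpha_{\max})}{d} I_d,
\]
which is exactly the $(\alpha_{\min}(V), 2\beta)$-pseudorandom lower bound with $\alpha_{\min}(V) = n \alpha_{\min}/(2 \alpha_{\max}) = s(V) \alpha_{\min}/(2\alpha_{\max})$.

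There is no real obstacle here; the only subtlety is the factor of $2$ in the subset size, which is exactly what lets us split $B'$ into two equal halves so that the upper bound on $B''$ certifies a norm bound usable on $B$. In particular, the argument uses \emph{both} sides of the pseudorandom hypothesis on $G$, which is why only the lower side of the conclusion about $V$ is asserted. Everything else is a single PSD manipulation plus a trace inequality.
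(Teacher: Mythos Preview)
Your proof is correct and follows essentially the same approach as the paper: split the size-$2\beta n$ set into two halves by norm, discard the large-norm half, upper bound the maximal norm in the small half via the trace of $G_{B''}G_{B''}^T$ and the $\alpha_{\max}$ pseudorandom bound, then apply the $\alpha_{\min}$ pseudorandom bound on the small half. The paper's proof is identical up to notation (using $B, S, B-S$ where you use $B', B, B''$).
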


We prove these results in \cref{app:RandomInfty}.
We can now combine the above ingredients to prove $\infty$-expansion for random unit vectors. 

\begin{proof} [Proof of \cref{t:RandomFrameInfty}]
    \cref{t:GaussianConcentration} implies $(\Omega(1), O(1), \beta=1/4)$-pseudorandomness for Gaussian frame $G \sim N(0, \frac{1}{nd} I_{d} \otimes I_{n})$ with high probability $\geq 1 - \exp( - \Omega(n) )$. 
    $V$ is equivalently distributed as the normalized Gaussian frame $v_{j} \leftarrow \frac{g_{j}}{\|g_{j}\|_{2}}$, 
    so \cref{l:normalizationPseudo} implies that $V$ is $(\alpha_{\min} \geq \Omega(n), \beta = \frac{1}{2})$-pseudorandom. 
    Also, for $\eps$ a small constant, \cref{t:VershyninConcentration} implies $V$ is $\eps$-doubly balanced with probability $\geq 1 - \exp( -\Omega(n))$. 
    Therefore we can apply \cref{l:pseudoInftyRelation} to show $V$ satisfies $(1-\lambda)$-$\infty$-expansion for 
\[ \lambda \geq \frac{\alpha_{\min}(V)}{s(V)} - \eps \geq \Omega(1) ,   \]
where we used $\alpha_{\min} \geq \Omega(n)$, the size is $s(V) = n$, and $\eps$ is a small enough constant.   
\end{proof}

\section{Convergence of Tyler's Iterative Procedure} \label{s:InftyImpliesQuantumExpansion}

In this section we prove \cref{t:TylerIterativeAnalysis}, showing Tyler's iterative procedure has linear convergence to estimator as soon as $n \gtrsim d$. 
This improves on Theorem 1.3 from Franks and Moitra \cite{FM20} by a $\log^{2} d$ factor in terms of the sample requirement.
Concretely, we will show that the key property used in the analysis of \cite{FM20}, quantum expansion, already holds at this lower sample threshold.
This result is optimal up to constants, as the M-estimator does not even exist when $n < d$.

    % Frame scaling (\cref{d:frameScalingProblem}) involves two conditions which are easy to satisfy individually:
    %     \[ L := (V V^{T})^{-1/2} \implies L V V^{T} L^{T} = I_{d};   \quad
    %      % R_{jj} := 1/\|v_{j}\|_{2} \implies R_{jj} \|v_{j}\|_{2}^{2} = 1   \]
    %      R := \diag(V^{T} V)^{-1/2} \implies \diag(R^{T} V^{T} V R) = I_{n} .  \]
    % This suggests the Flip-Flop procedure, devised by Gurvits \cite{G04} in the context of operator scaling:
    %     \[V_{t+1} \leftarrow (V_{t} V_{t})^{-1/2} V_{t}, \qquad V_{t+2} \leftarrow V_{t+1} \diag(V_{t+1}^{T} V_{t+1})^{-1/2} .   \]

    We begin by motivating and defining Tyler's iterative procedure. 
    In the setting of Elliptical distributions, the goal is to estimate the shape matrix using Tyler's M-Estimator. By \cref{l:EllipticalFrameCorrespondence} this corresponds to the left scaling solution for the sample data. 
    Since the Flip-Flop procedure in \cref{eq:FlipFlop} computes a sequence of iterates converging to the scaling solution, we could use the above correspondence to compute a sequence of estimators converging to the M-Estimator. 
    % Below we show how the correspondence between left scalings and estimators naturally leads to Tyler's iterative procedure. 

    Tyler's iterative procedure exactly follows two iterations of Flip-Flop while keeping the right scaling implicit. Recall two iterations of Flip-Flop gives
    % Recall that the Flip-Flop procedure in \cref{eq:FlipFlop} computes scalings $(L_{t}, R_{t})$ that converge to the scaling solution.  This correspondence therefore suggests a natural approach: apply the and keep track of the estimator . so 
    \[ L_{t+1} \leftarrow ( L_{t} V R_{t}^{2} V^{T} L_{t}^{T} )^{-1/2} L_{t} \quad , \quad R_{t+1} \leftarrow R_{t} \diag( R_{t} V^{T} L_{t}^{T} L_{t} V R_{t})^{-1/2} .   \]
    Now we observe that for any left scaling $L$, there is a naturally induced right scaling $R_{jj} := 1/\|L v_{j}\|_{2}$ which fixes the equal-norm condition. Indeed this is equivalent to the second step of Flip-Flop above. Therefore we can keep this step implicit and update the estimator as 
    % Tyler's iterative procedure is essentially two iterations of Flip-Flop where we only keep track of the left scaling
        \[ \overline{\Sigma}_{t+1} \leftarrow \sum_{j \in [n]} \frac{\overline{\Sigma}_{t}^{-1/2} v_{j} v_{j}^{T} \overline{\Sigma}_{t}^{-1/2}}{v_{j}^{T} \overline{\Sigma}_{t}^{-1} v_{j}}  .  \]

    The main algorithmic result of \cite{FM20} uses the perspective of \emph{geodesic convexity} to analyze Tyler's iterative procedure. 
    In particular, they show that the quantum expansion condition of \cite{KLR} is intimately tied to the geodesic version of strong convexity. 

    The result below follows from the arguments in \cite{FM20}: 
    
    \begin{theorem} [Section 4 of \cite{FM20}] \label{t:FMAlgoExtract}
        % Let frame $X \in \R^{d \times n}$ have doubly balanced scaling $V = L X R$. Flip-Flop converges quickly if $V$ is a $(1-\lambda)$-quantum expander 
        % \[ \forall y \perp 1_{n} : \quad \| V Y V^{T} \|_{F} \leq \frac{s(V)(1-\lambda)}{\sqrt{dn}} .    \]
        % for $\lambda \geq \Omega(1)$
        % As a consequence, 
        Consider input $x_{1}, ..., x_{n} \sim E(\Sigma, u)$ with Tyler's M-Estimator $\hat{\Sigma}$. Let $U := \{u_{1}, ..., u_{n}\}$ be the `normalized' frame, with $u_{j} := \frac{\hat{\Sigma}^{-1/2} x_{j}}{\|\hat{\Sigma}^{-1/2} x_{j}\|_{2}}$. If $U$ is a $(1-\Omega(1))$-quantum expander according to \cref{d:frameQuantumExpansion}, then 
        % \[ \forall y \perp 1_{n} : \quad \| U Y U^{T} \|_{F} \leq \frac{s(U)(1-\lambda)}{\sqrt{dn}}  .  \] 
        % $X \in \R^{d \times n}$ have columns distributed according to elliptical distribution $E(\Sigma, u)$.  
        % let $U \in \R^{d \times n}$ be the normalized frame with 
        Tyler's iterative procedure satisfies approximation $\|I_{d} - \hat{\Sigma}^{1/2} \overline{\Sigma}_{(T)}^{-1} \hat{\Sigma}^{-1/2} \|_{F} \leq \delta$ within iteration $T \lesssim |\log\det(\hat{\Sigma})| + d + \log(1/\delta)$. 
    \end{theorem}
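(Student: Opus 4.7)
The plan is to adapt the geodesically convex optimization framework of \cite{FM20} to the quantum expansion hypothesis on the normalized frame $U$. First, I would recall the natural negative log-likelihood potential for Tyler's M-Estimator, essentially
\[ f(\Sigma) := \frac{d}{n} \sum_{j \in [n]} \log(x_{j}^{T} \Sigma^{-1} x_{j}) + \log \det(\Sigma) , \]
whose stationary points on the slice $\tr[\Sigma] = d$ correspond precisely to the fixed-point equations in \cref{d:TylerMEstimator}. This function is not Euclidean convex but is geodesically convex with respect to the Riemannian geometry on $\PD(d)$, and a direct calculation shows that Tyler's update $\overline{\Sigma}_{t+1} \leftarrow \sum_{j} x_{j} x_{j}^{T}/(x_{j}^{T} \overline{\Sigma}_{t}^{-1} x_{j})$ (with the trace normalization) is exactly the natural-gradient/Flip-Flop step for $f$ composed with the induced right scaling, so each iteration is a descent step.

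The core of the argument is to translate the quantum expansion hypothesis into geodesic strong convexity of $f$ at $\hat{\Sigma}$. After reparametrizing around $\hat{\Sigma}$ by $\Sigma = \hat{\Sigma}^{1/2} \exp(H) \hat{\Sigma}^{1/2}$ for a trace-free symmetric $H$, I would compute the Riemannian Hessian of $f$ and observe that, at $\hat{\Sigma}$, its quadratic form reduces to
\[ \sum_{j \in [n]} (u_{j}^{T} H u_{j})^{2} - \frac{1}{n} \Big( \sum_{j \in [n]} u_{j}^{T} H u_{j} \Big)^{2} \]
(up to scalars), where $u_{j}$ are the normalized samples. Setting $y_{j} := u_{j}^{T} H u_{j}$, this is exactly the quantity controlled by $(1-\Omega(1))$-quantum expansion of $U$ via \cref{d:frameQuantumExpansion} applied to $y \perp 1_{n}$, and it yields a geodesic strong convexity constant $\mu = \Omega(1)$ in the Riemannian metric.

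Next, I would show that strong convexity holds along the entire iterate trajectory by an inductive robustness argument: as long as $\overline{\Sigma}_{t}$ lies in a fixed geodesic ball around $\hat{\Sigma}$, the frame at the current iterate is still a $(1 - \Omega(1))$-quantum expander (by the $\|\cdot\|_{F}$ analogue of \cref{l:frameInftyRobustness}, which is Lemma 3.20 in \cite{KLR}), so the Hessian remains uniformly lower bounded. Combined with the descent property of the Flip-Flop step, this gives $f(\overline{\Sigma}_{t+1}) - f(\hat{\Sigma}) \leq (1 - \Omega(1))(f(\overline{\Sigma}_{t}) - f(\hat{\Sigma}))$, hence linear convergence of the potential gap, and strong convexity upgrades this to linear convergence of $\|I_{d} - \hat{\Sigma}^{1/2} \overline{\Sigma}_{t}^{-1} \hat{\Sigma}^{1/2}\|_{F}$.

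Finally, the iteration count $T \lesssim |\log\det \hat{\Sigma}| + d + \log(1/\delta)$ decomposes into a warm-up and a linear phase. Starting from $\overline{\Sigma}_{(0)} = I_{d}$, the initial potential gap $f(\overline{\Sigma}_{(0)}) - f(\hat{\Sigma})$ is bounded by $|\log\det \hat{\Sigma}| + O(d)$ directly from the explicit form of $f$ together with $\tr[\hat{\Sigma}] = d$. The main obstacle is the warm-up: the initial iterate may sit outside the basin of attraction where the strong convexity constant is $\Omega(1)$, so to reach the basin I would invoke a coarser global descent bound on $f$ (a constant potential decrease per step, which follows from worst-case Flip-Flop analysis as in \cite{GGOW15}) for at most $O(|\log\det \hat{\Sigma}| + d)$ iterations; once inside the basin, the linear rate established above yields the remaining $O(\log(1/\delta))$ iterations, completing the bound.
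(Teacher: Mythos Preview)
Your proposal is essentially the paper's own argument: the paper's proof sketch likewise introduces the capacity function $f_{X}$, invokes Theorem~4.1 of \cite{FM20} to convert quantum expansion of $U$ into $\Omega(1)$ geodesic strong convexity near the optimizer, uses Lemma~4.2 of \cite{FM20} to show Tyler's procedure is a descent method reaching the strongly convex region in $O(|\log\det\hat{\Sigma}|+d)$ steps, and then applies Lemma~4.3 of \cite{FM20} for linear convergence. One small correction: your displayed Hessian expression is not quite right---the actual second variation at $\hat{\Sigma}$ is $\|H\|_{F}^{2} - \tfrac{d}{n}\sum_{j}(u_{j}^{T} H u_{j})^{2}$, and it is the bound $\tfrac{d}{n}\sum_{j}(u_{j}^{T} H u_{j})^{2} \le (1-\Omega(1))\|H\|_{F}^{2}$ (obtained from quantum expansion by duality, since $y\mapsto\sum_{j}y_{j}u_{j}u_{j}^{T}$ is the adjoint of $H\mapsto(u_{j}^{T}Hu_{j})_{j}$) that yields strong convexity; once you fix this, the rest of your outline goes through unchanged.
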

    \begin{proof} [Proof Sketch]
        For input $X$, the inverse estimator $\hat{\Sigma}^{-1}$ is the optimizer of the following capacity function
        \[ f_{X}(Z) := \frac{d}{n} \sum_{j \in [n]} \log \langle v_{j}, Z v_{j} \rangle - \log \det(Z) .   \]
        By Theorem 4.1 in \cite{FM20}, if $U$ is a $(1-\Omega(1))$-quantum expander, then $f_{X}$ is $\Omega(1)$ geodesically strongly convex on a large neighborhood of the optimizer $\hat{\Sigma}^{-1}$. 
        Lemma 4.2 in \cite{FM20} shows that Tyler's iterative procedure is a descent method for $f_{X}$ which starts at $\overline{\Sigma}_{(0)} := I_{d}$ and reaches this strongly convex neighborhood within $O(|\log \det(\hat{\Sigma})| + d)$ iterations. 
        From this point, Lemma 4.3 in \cite{FM20} shows linear convergence to the optimum via geodesic strong convexity. 
    \end{proof}

    The key technical contribution in Theorem 2.9 of \cite{FM20} is to show random unit vector frames are quantum expanders above sample threshold $n \gtrsim d \log d$. 
    In \cref{t:RandomFrameInfty}, we showed that random frames satisfy the $\infty$-expansion condition. 
    Our refined result in \cref{t:frameMainInftyR} in fact shows that the frame scaling solution also satisfies $\infty$-expansion.
    % , this is equivalent to showing $\infty$-expansion for the normalized frame as defined in \cref{t:FMAlgoExtract}. 
    In \cref{app:InftyImpliesQuantumExpansion} we show that $\infty$-expansion implies quantum expansion.
    % which combined with the above gives the algorithmic result . 

    \begin{theorem} \label{t:InftyImpliesQ}
        For doubly balanced frame $V \in \R^{d \times n}$, if $V$ satisfies $(1-\lambda)$-$\infty$-expansion, then it is also a $(1-\Omega(\lambda^{2}))$-quantum expander. 
    \end{theorem}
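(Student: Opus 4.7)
My plan is to reformulate quantum expansion as a spectral-gap condition on the matrix $M \in \R^{n \times n}$ defined by $M_{ij} := \langle v_{i}, v_{j} \rangle^{2}$. A direct calculation gives $\|T(y)\|_{F}^{2} = y^{T} M y$, where $T(y) := \sum_{j} y_{j} v_{j} v_{j}^{T}$, so $(1-\lambda')$-quantum expansion is equivalent to the bound $\sup_{y \perp 1_{n},\, \|y\|_{2} = 1} y^{T} M y \leq (1-\lambda')^{2} s(V)^{2}/(dn)$. For doubly balanced $V$, a direct check gives $M \cdot 1_{n} = \tfrac{s(V)^{2}}{dn} \cdot 1_{n}$, and the Schur product bound $\|M\|_{\op} = \|(V^{T} V) \circ (V^{T} V)\|_{\op} \leq (\max_{j} \|v_{j}\|_{2}^{2}) \cdot \|V^{T} V\|_{\op} = s(V)^{2}/(dn)$ shows that this is in fact the top eigenvalue. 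The task then reduces to bounding the second eigenvalue of $M$.

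\textbf{Main step.} The critical identity is $(M y)_{j} = \sum_{k} y_{k} \langle v_{j}, v_{k} \rangle^{2} = \tr(v_{j} v_{j}^{T} \, T(y)) = v_{j}^{T} T(y) v_{j}$, which converts the action of $M$ on $\R^{n}$ into an operator-norm quantity controlled by $\infty$-expansion. I will take $y^{*}$ to be a top eigenvector of $M$ restricted to $1_{n}^{\perp}$, normalized so $\|y^{*}\|_{2} = 1$, with eigenvalue $\mu^{*} = \|T(y^{*})\|_{F}^{2}$, and then evaluate the eigenvalue equation at the extremal coordinate $j^{*} := \arg\max_{j} |y^{*}_{j}|$:
\[ \mu^{*} \|y^{*}\|_{\infty} \;=\; |v_{j^{*}}^{T} T(y^{*}) v_{j^{*}}| \;\leq\; \|v_{j^{*}}\|_{2}^{2} \cdot \|T(y^{*})\|_{\op} \;=\; \tfrac{s(V)}{n} \|T(y^{*})\|_{\op}. \]
Since $y^{*}/\|y^{*}\|_{\infty} \in 1_{n}^{\perp}$ has $\ell_{\infty}$-norm equal to one, applying $(1-\lambda)$-$\infty$-expansion gives $\|T(y^{*})\|_{\op} \leq \|y^{*}\|_{\infty} \cdot (1-\lambda) s(V)/d$. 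Substituting and cancelling $\|y^{*}\|_{\infty} > 0$ produces the target bound $\mu^{*} \leq (1-\lambda) \cdot s(V)^{2}/(dn)$.

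\textbf{Wrap-up.} Taking square roots yields $\|T(y)\|_{F} \leq \sqrt{1-\lambda} \cdot s(V)/\sqrt{dn} \leq (1 - \lambda/2) \cdot s(V)/\sqrt{dn}$ for every $y \perp 1_{n}$ with $\|y\|_{2} \leq 1$, where the second inequality uses $\sqrt{1-\lambda} \leq 1-\lambda/2$. So $V$ is in fact a $(1 - \lambda/2)$-quantum expander, which is strictly stronger than the claimed $(1-\Omega(\lambda^{2}))$ bound. I do not anticipate any serious obstacle: the argument consists only of the Schur product estimate identifying the top eigenvalue of $M$, the one-line identity $(My)_{j} = v_{j}^{T} T(y) v_{j}$, and the eigenvalue equation evaluated at $j^{*}$. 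The conceptual point is that this eigenvalue equation automatically picks out a coordinate at which the $\ell_{\infty}$-rescaling of $y^{*}$ is ``tight'' for the $\infty$-expansion hypothesis, thereby converting the $\ell_{\infty} \to \op$ control into the desired $\ell_{2} \to F$ control without any loss of the $1/\sqrt{dn}$ scaling.
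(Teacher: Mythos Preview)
Your argument is correct, and it is genuinely different from (and sharper than) the paper's proof. The paper proceeds via the Cheeger-type inequality of \cite{FM20}: it defines a combinatorial quantity $\mathrm{ch}(V)$ involving all subspace/subset pairs $(A,B)$, shows by case analysis that $(1-\lambda)$-$\infty$-expansion (through the pseudorandom lower bound of \cref{l:pseudoInftyRelation}) forces $\mathrm{ch}(V)\gtrsim\lambda$, and then invokes Corollary~A.4 of \cite{FM20} to conclude $(1-\mathrm{ch}(V)^{2})$-quantum expansion. This two-step ``expansion $\Rightarrow$ Cheeger constant $\Rightarrow$ spectral gap'' route incurs the usual quadratic loss, yielding only $(1-\Omega(\lambda^{2}))$.

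Your route bypasses the Cheeger machinery entirely. By recognizing that quantum expansion is exactly the second eigenvalue of the entrywise-squared Gram matrix $M=(V^{T}V)\circ(V^{T}V)$, and that $(My)_{j}=v_{j}^{T}T(y)v_{j}$, you can read off the eigenvalue equation at the coordinate where $|y^{*}_{j}|$ is maximal and apply the $\infty$-expansion hypothesis to $y^{*}/\|y^{*}\|_{\infty}$ directly. This gives $\mu^{*}\le(1-\lambda)s(V)^{2}/(dn)$ and hence $(1-\lambda/2)$-quantum expansion, a linear rather than quadratic dependence on $\lambda$. Two small remarks: the Schur-product estimate you invoke, $\lambda_{\max}(A\circ B)\le(\max_{i}A_{ii})\,\lambda_{\max}(B)$ for PSD $A,B$, is correct (it follows from writing $A=\sum_{k}a_{k}a_{k}^{T}$ and $A\circ B=\sum_{k}D_{k}BD_{k}$ with $D_{k}=\diag(a_{k})$), but you do not actually need it---your bound on $\mu^{*}$ already controls the relevant quantity regardless of whether $\mu^{*}$ is the top or second eigenvalue of $M$. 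What you \emph{do} need, and correctly use, is that $1_{n}$ is an eigenvector of $M$, so that $1_{n}^{\perp}$ is $M$-invariant and the restricted eigenvector $y^{*}$ satisfies the full eigenvalue equation $My^{*}=\mu^{*}y^{*}$.
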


    We can now combine this result with the arguments of \cite{FM20} to prove linear convergence of Tyler's iterative procedure at the optimal sample threshold. 

    \vspace{3mm}

    \begin{proof} [Proof of \cref{t:TylerIterativeAnalysis}]
        Let $X \in \R^{d \times n}$ be generated according to $E(\Sigma, u)$. By \cref{l:EllipticalReductiontoI}, frame $V \in \R^{d \times n}$ has columns $v_{j} := \frac{\Sigma^{-1/2} x_{j}}{\|\Sigma^{-1/2} x_{j}\|_{2}}$ distributed according to $E(I_{d}, 1)$. 
        Similarly, by \cref{l:EllipticalFrameCorrespondence}, scalings $L := \hat{\Sigma}^{-1/2}, R_{jj} := 1/\|\hat{\Sigma}^{-1/2} x_{j}\|_{2}$ gives frame scaling solution $U := L V R$ for input $V$. 
        Therefore, in order to apply \cref{t:FMAlgoExtract}, we need to show that the frame scaling solution for random unit vector input satisfies quantum expansion. 
        %to show the convergence bound, so the goal is to prove
        % $\hat{\Sigma}_{X}$ induces a quantum expander, in the sense that 
        %the frame  satisfies the quantum expansion property. 
        % By  and \cref{l:EllipticalFrameCorrespondence}, this is equivalent to showing the frame scaling solution $V_{\infty}$ for input $V \in \R^{d \times n}$ satisfies the quantum expansion property. 

        Following the proof of \cref{t:mainSampleComplexity}, by \cref{t:VershyninConcentration} and \cref{t:RandomFrameInfty}, we have that $V$ is $\eps$-doubly balanced and satisfies $(1-\lambda)$-$\infty$-expansion for $\lambda \geq \Omega(1)$. 
        Therefore, by \cref{t:frameMainInftyR}(3), we have that the frame scaling solution $U$ satisfies $(1-\Omega(1))$-$\infty$-expansion. 
        Finally, \cref{t:InftyImpliesQ} tells us that $U$ is also a $(1-\Omega(1))$-quantum expander. 
        Also note $|\log \det(\hat{\Sigma})| \approx |\log\det(\Sigma)|$ by the correspondence to the scaling solution (\cref{l:EllipticalFrameCorrespondence}) and \cref{t:frameMainInftyR}(2). 
        The iteration bound follows by \cref{t:FMAlgoExtract}. 
    \end{proof}

    \section{Conclusion} \label{s:conclusion}

In this work, we showed optimal sample complexity bounds for Tyler's M-Estimator for the shape matrix of Elliptical distributions. 
These bounds are tight (up to constant factors), as they match known lower bounds for the special case of Gaussian covariance estimation. 
We also recover the algorithmic analysis of Franks and Moitra \cite{FM20}, showing linear convergence of Tyler's iterative procedure with optimal sample threshold $n \gtrsim d$. 

Our proof follows the connection to frame scaling, as described in \cite{FM20}. 
We identify a new `pseudorandom' condition for frames, $\infty$-expansion, and show it is satisfied by random frames with high probability. 
Next, we use the $\infty$-expansion condition to prove a new result for frame scaling. 
% , showing that the scaling solution satisfies strong bounds in the operator norm. 
We believe this result could be of independent interest.
We highlight two problems which could potentially benefit from similar techniques. 
% and discuss how our techniques could lead to improved results. 

    The first is the Paulsen problem: given $\eps$-doubly balanced frame $U \in \R^{d \times n}$, find a nearby frame $V \in \R^{d \times n}$ that is doubly balanced. 
    This was a major problem in frame theory \cite{CC13}, open for nearly two decades despite significant attention.
    The main conjecture was to show the distance was independent of $n$.
    In \cite{KLLR}, the authors were able to affirm this conjecture, showing a distance bound of $\text{poly}(d) \cdot \eps$ using a smoothed analysis approach to frame scaling. Indeed, their analysis involved a pseudorandom property which is very similar to our results in \cref{s:FrameInfty}. 
    % similar pseudorandom property related to our $\infty$-expansion condition to prove strong convergence bounds for scaling. 
    \cite{HM19} improved the distance bound to $d \cdot \eps$, which is a factor $d$ greater than the known lower bound $\eps$. 
    % In \cite{AkshaysThesis} and upcoming work
    In upcoming work, we are able to use techniques similar to this paper to close this gap and show optimal distance $\eps$ for the Paulsen problem.

% In upcoming work, we show that a similar frame scaling result can be used to show optimal distance bounds for the Paulsen problem. 
% Indeed, in \cite{KLLR} the authors use a smoothed analysis approach to show that every $\eps$-doubly balanced frame can be scaled to a nearby doubly balanced frame with distance bound that is independent of the number of vectors $n$. The distance bound was subsequently improved by different techniques in \cite{HM19}. This result is only a single $d$ factor larger than known lower bound examples. 
% We believe that techniques similar to this work can be used to show optimal distance bound for this problem. 

The second problem is the matrix and tensor normal model in statistics. We are given tensor data $X \in \R^{d_{1}} \otimes ... \otimes \R^{d_{k}}$ from a Gaussian distribution $N(0,\Sigma)$, with the promise that the covariance matrix has a matching form $\Sigma = \Sigma_{1} \otimes ... \otimes \Sigma_{k}$. In \cite{MLE}, the authors show strong sample complexity bounds for this problem. 
% The sample threshold of this result is greater than the known lower bound by $\max_{a \in [k]} d_{a}$.
% For the special case of $k=2$, known as the matrix normal model, they are able to prove nearly optimal guarantees in the relative operator norm error. 
% Indeed this improvement also uses the result of \cite{KLR} for operator scaling for quantum expander input. 
We believe that techniques related to $\infty$-expansion can be used to give optimal sample complexity and error bounds for the tensor normal model.

\section{Bibliography}

  \bibliographystyle{plain}
\bibliography{refs.bib}

\appendix

% \crefalias{section}{appendix} % uncomment if you are using cleveref

\section{Proof of \cref{t:frameMainInfty}} \label{app:frameScaling}

    Here we prove \cref{t:frameMainInfty} using $\infty$-expansion. We show the following more precise result:

      \begin{theorem} \label{t:frameMainInftyR}
    Consider frame $V \in \R^{d \times n}$ that is $\eps$-doubly balanced and satisfies $(1-\lambda)$-$\infty$-expander with $\lambda^{2} \gtrsim \eps$. If $(V_{t}, L_{t}, R_{t})$ is the dynamical system given in \cref{d:frameGradFlow} and \cref{l:scalingDynamics}, 
  \begin{enumerate}
    \item $(L_{\infty}, R_{\infty}) := \lim_{t \to \infty} (L_{t}, R_{t})$ is the frame scaling solution for $V$ according to \cref{d:frameScalingProblem};
    \item The scaling solution satisfies $ \| L_{\infty} - I_{d} \|_{\op}, \|R_{\infty} - I_{n}\|_{\op} \lesssim \frac{\eps}{\lambda}$;
    \item $V_{\infty} := \lim_{t \to \infty} V_{t} = L_{\infty} V R_{\infty}$ satisfies $(1 - \lambda/2)$-$\infty$-expansion, and has size $s(V_{\infty}) \geq s(V)( 1 - O(\frac{\eps^{2}}{\lambda}) )$. 
    \end{enumerate}
    \end{theorem}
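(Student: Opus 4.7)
The plan is a bootstrap argument along the gradient flow trajectory of \cref{d:frameGradFlow}, combining the three ingredients already available: the operator-norm decay of \cref{p:EFInftyBothDecrease}, the scaling bound of \cref{l:scalingProdIntBound}, and the robustness of $\infty$-expansion under small scalings from \cref{l:frameInftyRobustness}. The idea is that the hypotheses of \cref{p:EFInftyBothDecrease} (being doubly balanced and having $\infty$-expansion) are preserved throughout the flow with only mildly degraded parameters, so the exponential decay persists for all time and can be integrated to control the total scaling.

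Concretely, I would define
\[ T^{*} := \sup \Bigl\{ T \geq 0 \,:\, \text{for all } t \leq T,\ V_{t} \text{ is } 2\eps\text{-doubly balanced and satisfies } (1-\lambda/2)\text{-}\infty\text{-expansion} \Bigr\}, \]
which is positive by continuity. On $[0, T^{*}]$, the hypothesis of \cref{p:EFInftyBothDecrease} holds with parameters $(2\eps, \lambda/2)$ (still satisfying $(\lambda/2)^{2} \gtrsim 2\eps$ up to the constant in $\lambda^{2} \gtrsim \eps$), so a Gr\"onwall argument gives
\[ \|(E_{t}, F_{t})\|_{\op} \leq s(V) \cdot \eps \cdot e^{- c\, s(V)\, \lambda\, t} . \]
Integrating over $[0, T^{*}]$ yields $\int_{0}^{T^{*}} \|E_{t}\|_{\op} + \|F_{t}\|_{\op} \, dt \lesssim \eps/\lambda$, and \cref{l:scalingProdIntBound} then produces $\max\{\|L_{t} - I_{d}\|_{\op}, \|R_{t} - I_{n}\|_{\op}\} \lesssim \eps/\lambda$ throughout.

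To close the bootstrap, I plug these scaling bounds into \cref{l:frameInftyRobustness}: writing $V_{t} = L_{t} V R_{t}$ with scaling distance $\delta_{t} \lesssim \eps/\lambda$, the robustness estimate gives an $\infty$-expansion parameter $\lambda_{t} \geq \lambda - O(\eps/\lambda) \geq 3\lambda/4 > \lambda/2$, provided the constant in $\lambda^{2} \gtrsim \eps$ is taken large enough. Simultaneously, the exponential decay of $\|(E_{t}, F_{t})\|_{\op}$ keeps the doubly balanced error strictly below $2\eps \cdot s(V_{t})$. Both conclusions are strict improvements on the bootstrap hypothesis, so a standard continuity argument forces $T^{*} = \infty$. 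Passing to $t \to \infty$, the integrated decay of the error forces $(L_{t}, R_{t})$ to converge to a frame scaling solution $(L_{\infty}, R_{\infty})$, proving (1); the integrated scaling estimate is exactly (2); and applying \cref{l:frameInftyRobustness} at the limit produces the $(1-\lambda/2)$-$\infty$-expansion of $V_{\infty}$ in (3). For the size statement, a direct computation of $\partial_{t} s(V_{t})$ against the flow equation gives $\partial_{t} s(V_{t}) = - 2 \Delta(V_{t})$; combining with $\Delta(V_{t}) \lesssim \|(E_{t}, F_{t})\|_{\op}^{2}$ from \cref{f:matrixdbSmallGrad} and integrating the squared exponential decay yields $s(V_{\infty}) \geq s(V)(1 - O(\eps^{2}/\lambda))$.

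The main obstacle I expect is the bookkeeping of universal constants so that the bootstrap actually closes: the degradation of $\lambda$ under \cref{l:frameInftyRobustness} must be strictly less than $\lambda/2$, and the simultaneous control on $\|(E_{t}, F_{t})\|_{\op}$ must stay below the $2\eps$ window, and both conditions pin down the precise constant hidden in $\lambda^{2} \gtrsim \eps$. A related subtlety is the dimensional scaling of the $O(\delta)$ term in \cref{l:frameInftyRobustness} relative to $s(V)$, which must be tracked carefully so that the improvement $\lambda_{t} \geq \lambda - O(\eps/\lambda)$ is genuine rather than an artifact of unit mismatch between $\delta$ and the expansion parameter.
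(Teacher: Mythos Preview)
Your proposal is correct and follows essentially the same bootstrap/continuity argument as the paper: maintain a window in which $V_{t}$ stays nearly doubly balanced and retains $(1-\lambda/2)$-$\infty$-expansion, use the operator-norm decay to get exponential convergence of $\|(E_{t},F_{t})\|_{\op}$, integrate via \cref{l:scalingProdIntBound} to bound the scalings, and close the loop with \cref{l:frameInftyRobustness}. The only cosmetic difference is that you invoke the informal \cref{p:EFInftyBothDecrease} as a black box, whereas the paper unpacks it via the two-case estimate \cref{p:EFInftyBothDecreaseR} and a weighted potential $h(t)=\max\{(1+\delta)\|E_{t}\|_{\op},\|F_{t}\|_{\op}\}$ with $\delta=\lambda/4$; the paper also explicitly carries the size bound $s(V_{t})\geq s(V)(1-\eps)$ as part of the bootstrap hypothesis rather than deriving it after the fact, which is exactly the ``unit mismatch'' you flag in your final paragraph.
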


    Our plan is to show exponential convergence of the error matrices through gradient flow. 
    We begin by explicitly calculating how the error matrices change in the dynamical system. The following expression is implicit in the proof of Proposition 3.2 in \cite{KLR}. 

    \begin{fact} \label{eq:rowcolChangeR}
    For frame $V \in \R^{d \times n}$, let $V_{t}$ be the solution to the dynamical system as given in \cref{d:frameGradFlow}. Then for any fixed $x \in \R^{d}$ and $j \in [n]$, 
    \[  -\partial_{t=0} \langle x x^{T}, V V^{T} \rangle = 2 \langle x x^{T}, E V V^{T} + V F V^{T} \rangle; 
    \quad -\partial_{t=0} \|v_{j}\|_{2}^{2} = 2 ( F_{jj} \|v_{j}\|_{2}^{2} + \langle E, v_{j} v_{j}^{T} \rangle ).   \]
    \end{fact}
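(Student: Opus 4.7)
The statement is a direct product-rule computation using the defining equation of the dynamical system in \cref{d:frameGradFlow}, namely
\[ \partial_{t=0} V = -(E V + V F), \]
where for brevity I write $E := E(V)$ and $F := F(V)$. The plan is to differentiate each expression, substitute the above, and collect terms using symmetry.

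For the first identity, I would apply the product rule to write
\[ \partial_{t=0} \langle x x^{T}, V V^{T} \rangle = \langle x x^{T}, (\partial_{t=0} V) V^{T} + V (\partial_{t=0} V)^{T} \rangle = -\langle x x^{T}, (EV + VF) V^{T} + V (EV + VF)^{T} \rangle. \]
The two key observations I would use to simplify are that $E = d V V^{T} - s(V) I_{d}$ is symmetric (it is an explicit symmetric matrix) and that $F$ is diagonal, hence symmetric as well. Taking transposes gives $(EV + VF)^{T} = V^{T} E + F V^{T}$, so the right-hand side becomes
\[ -\langle x x^{T}, E V V^{T} + V F V^{T} + V V^{T} E + V F V^{T} \rangle = -2 \langle x x^{T}, E V V^{T} + V F V^{T} \rangle, \]
where I also used the cyclic/self-adjoint property $\langle x x^{T}, V V^{T} E \rangle = \langle x x^{T}, E V V^{T} \rangle$, which follows because $x x^{T}$ and $E$ are symmetric. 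Negating gives the claimed expression.

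For the second identity, I would write $\|v_{j}\|_{2}^{2} = e_{j}^{T} V^{T} V e_{j}$ and differentiate the column $v_{j} = V e_{j}$:
\[ \partial_{t=0} v_{j} = -(E V + V F) e_{j} = - E v_{j} - F_{jj} v_{j}, \]
where the second equality uses that $F$ is diagonal so $V F e_{j} = F_{jj} V e_{j} = F_{jj} v_{j}$. Then
\[ \partial_{t=0} \|v_{j}\|_{2}^{2} = 2 \langle v_{j}, \partial_{t=0} v_{j} \rangle = -2 \bigl( \langle E, v_{j} v_{j}^{T} \rangle + F_{jj} \|v_{j}\|_{2}^{2} \bigr), \]
and negating yields the stated formula.

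There is no real obstacle: the computation is a mechanical application of the product rule, and the only subtlety is being careful that $E$ and $F$ are symmetric so that the two product-rule terms combine into a factor of $2$. I would mainly need to make sure the symmetrization step is stated cleanly, since the expression $\langle x x^{T}, V V^{T} E \rangle = \langle x x^{T}, E V V^{T} \rangle$ uses both the symmetry of $E$ and the symmetry of the test matrix $x x^{T}$.
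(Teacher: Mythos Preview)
Your proposal is correct and follows essentially the same product-rule computation as the paper. The only cosmetic difference is that, for the first identity, the paper justifies $\langle x x^{T}, V V^{T} E \rangle = \langle x x^{T}, E V V^{T} \rangle$ by noting $E = d V V^{T} - s(V) I_{d}$ commutes with $V V^{T}$, whereas you argue via symmetry of $E$ and $x x^{T}$; both are valid and the rest of the argument is identical.
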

    \begin{proof}
    For the first expression, we compute
    \begin{align*}
        -\partial_{t=0} \langle x x^{T}, V_{t} V_{t}^{T} \rangle &= \langle x x^{T}, (E V + V F) V^{T} + V (EV + VF)^{T} \rangle = 2 \langle x x^{T}, E V V^{T} + V F V^{T} \rangle , 
    \end{align*}
    where we used the product rule and substitute \cref{d:frameGradFlow} for the derivative, and in the last step we used that $E = d \cdot V V^{T} - s(V) I_{d}$ commutes with $V V^{T}$. 
    For the second, a similar calculation gives
    \begin{align*}
        -\partial_{t=0} V_{t}^{T} V_{t} &= (E V + V F)^{T} V + V^{T} (EV + VF) = 2 V^{T} E V + F V^{T} V + V^{T} V F . 
    \end{align*}
    The second expression follows by considering the $j$-th diagonal entry of the above expression. 
    \end{proof}
    
    % This formula is 
    % \begin{align} \label{eq:rowcolChange}
    % -\partial_{t=0} \langle x x^{T}, V V^{T} \rangle = 2 \langle x x^{T}, E V V^{T} + V F V^{T} \rangle; 
    % \quad -\partial_{t=0} \|v_{j}\|_{2}^{2} = 2 ( F_{jj} \|v_{j}\|_{2}^{2} + \langle E, v_{j} v_{j}^{T} \rangle ).
    % \end{align} 
    Recall that $E = d V V^{T} - s I_{d}$ and $F = \diag(n V^{T} V - s I_{n})$ according to \cref{d:sizeRowCol}, and we want to show that these errors decrease under $\infty$-expansion.
    For illustration, let $x \in S^{d-1}$ be the top eigenvector $E x = \|E\|_{\op} x$, and note this implies $d \cdot V V^{T} x = (s + \|E\|_{\op}) x$. 
    % with eigenvalue $\mu := \frac{s(V) + \|E\|_{\op}}{d}$. 
    Then the first term above is
    \[ d \cdot \langle x x^{T}, E V V^{T} \rangle = \|E\|_{\op} \langle x x^{T}, d V V^{T} \rangle = \|E\|_{\op} (s + \|E\|_{\op}) \approx s \|E\|_{\op} ,   \]
    where we assume $\|E\|_{\op} \ll s$ for simplicity. 
    This tells us that for top eigenvector $x$,
    % for the eigenvector corresponding to the largest error in $E(V)$, 
    the $E$ term in \cref{eq:rowcolChangeR} decreases the quadratic form $\langle x x^{T}, V V^{T} \rangle$. 
    Intuitively, this term pushes the error towards balanced. 
    % should decrease; it can be similarly shown that  similarly if $\delta < 0$ then the quadratic form should increase. 
    
    The other term involves $V F V^{T}$ for $\tr[F] = 0$. We can bound this using $\infty$-expansion: 
    \[ |d \langle x x^{T}, V F V^{T} \rangle | \leq \|x\|_{2}^{2} \|d V F V^{T}\|_{\op} \leq s (1 - \lambda) \|F\|_{\op} , \]
    where the first step was by definition of operator norm, and in the second step we applied \cref{d:frameInftyExpansion} to $F$. 
    This implies that this second term contributes only a small competing force for the change in \cref{eq:rowcolChangeR}, and so intuitively the error should decrease quickly. 
    We formalize this intuition below. We will also need the following lemma to control how the size changes. 
    
    % We can formalize this intuition to show the error shrinks with $\infty$-expansion. 

    % , so we also need to control how the size changes. 
    % We note the following helpful facts for our analysis. 

    \begin{lemma} [Lemma 2.17 in \cite{KLR}] \label{l:sizeChange}
    The change in size of $V$ for the dynamical system in \cref{d:frameGradFlow} is
    \[ -\partial_{t} s(V_{t}) = 2 \Delta(V) 
    = \frac{2}{d} \|E(V)\|_{F}^{2} + \frac{2}{n} \|F(V)\|_{F}^{2}  . \]
    % \|d V V^{T} - s I_{d} \|_{F}^{2} + \frac{2}{n} \| n \diag( V^{T} V) - I_{n} \|_{F}^{2} .  \]
    \end{lemma}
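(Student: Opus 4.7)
The plan is to differentiate $s(V_t) = \tr[V_t V_t^T]$ directly and substitute the defining ODE from \cref{d:frameGradFlow}. By the product rule together with $\tr[A] = \tr[A^T]$, the two cross terms are equal, giving $\partial_t \tr[V_t V_t^T] = 2\tr[(\partial_t V_t) V_t^T]$, so plugging in $-\partial_t V_t = E(V_t) V_t + V_t F(V_t)$ yields
\[ -\partial_t s(V_t) = 2\tr[E V V^T] + 2\tr[V F V^T]. \]
Using cyclicity to rewrite the second term as $\tr[F V^T V]$, the problem reduces to evaluating the two trace inner products $\tr[E \cdot (V V^T)]$ and $\tr[F \cdot (V^T V)]$.

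The key step is to exploit that $E$ and $F$ have zero trace. From \cref{d:sizeRowCol}, $\tr[E] = d\tr[V V^T] - s(V) \cdot d = 0$ and $\tr[F] = \sum_j (n\|v_j\|_2^2 - s(V)) = 0$. Substituting $V V^T = \tfrac{1}{d}(E + s(V) I_d)$ into the first trace, and using that $F$ is diagonal with $(V^T V)_{jj} = \|v_j\|_2^2 = \tfrac{1}{n}(F_{jj} + s(V))$ in the second, the cross terms proportional to $s(V) \tr[E]$ and $s(V) \tr[F]$ vanish, leaving exactly $\tr[E V V^T] = \tfrac{1}{d}\|E\|_F^2$ and $\tr[F V^T V] = \tfrac{1}{n}\|F\|_F^2$. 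Combining these with the overall factor of two reproduces the claimed identity $-\partial_t s(V_t) = 2\Delta(V)$.

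There is no substantial obstacle: the whole calculation is a couple of lines of trace manipulation once the centering of $E$ and $F$ is invoked. The only mild care point is the $F$ term, where one must remember that $F$ picks out only the diagonal of $nV^T V - s(V) I_n$, so the trace $\tr[F V^T V]$ only probes the diagonal entries $(V^T V)_{jj} = \|v_j\|_2^2$; this matches the definition of $F_{jj}$ exactly and yields the clean Frobenius-norm expression.
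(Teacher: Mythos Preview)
Your proof is correct. Note that the paper does not actually supply its own proof of this lemma; it is merely quoted as Lemma~2.17 of \cite{KLR}. Your direct trace computation---differentiating $s(V_t)=\tr[V_tV_t^T]$, substituting the ODE, and using $\tr[E]=\tr[F]=0$ to kill the cross terms---is exactly the standard derivation and matches what one finds in \cite{KLR}.
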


    % We can now formalize the above intuition that error shrinks with $\infty$-expansion. 

    \begin{lemma} \label{l:EFInftyBound}
    Given frame $V \in \R^{d \times n}$ with $V_{t}$ the solution to the dynamical system in \cref{d:frameGradFlow}, if $V$ satisfies $(1-\lambda)$-$\infty$-expansion at time $t=0$, then
        \begin{align*} 
        -\frac{1}{2} \partial_{t=0} \|E(V_{t})\|_{\op} & \geq s(V) ( \|E\|_{\op} - (1-\lambda) \|F\|_{\op} ) - \|(E,F)\|_{\op}^{2} ;  
        \\  -\frac{1}{2} \partial_{t=0} \|F(V_{t})\|_{\op} & \geq s(V) ( \|F\|_{\op} - \|E\|_{\op}) - \|(E,F)\|_{\op} (\|E\|_{\op} + \|F\|_{\op} ) .  
        \end{align*}
    \end{lemma}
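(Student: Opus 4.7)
The plan is to compute the time derivative along the extremal direction of each operator norm. For the first inequality, fix $x \in S^{d-1}$ and $\sigma \in \{\pm 1\}$ attaining $\sigma\, x^T E x = \|E\|_{\op}$, so $E x = \sigma \|E\|_{\op} x$ and (using $[E, VV^T] = 0$) $d\, V V^T x = (s + \sigma \|E\|_{\op}) x$. Since $\|\cdot\|_{\op}$ is a maximum of linear functionals, the Danskin-type identity $-\partial_{t=0}^{+} \|E(V_t)\|_{\op} = \min_{\text{extremal } (x,\sigma)} \bigl(-\sigma\, \partial_{t=0}(x^T E(V_t) x)\bigr)$ reduces matters to proving the claimed lower bound for every individual extremal pair. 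Writing $x^T E(V_t) x = d\, x^T V_t V_t^T x - s(V_t)$ and substituting \cref{eq:rowcolChangeR} expresses the inner derivative as $-2d\langle xx^T, EVV^T + VFV^T\rangle - \partial_{t=0} s(V_t)$, leaving three terms to control.

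I would then bound these separately. The restoring term is exact: the commutation and eigenvalue relation give $2d\sigma\langle xx^T, EVV^T\rangle = 2s\|E\|_{\op} + 2\sigma\|E\|_{\op}^2 \geq 2s\|E\|_{\op} - 2\|E\|_{\op}^2$ regardless of sign. The cross term $2d\sigma\langle xx^T, VFV^T\rangle$ is the single place where $\infty$-expansion is invoked: since $F$ is diagonal with $\tr F = 0$, the vector $y := \diag(F)/\|F\|_{\op} \in \R^n$ satisfies $y \perp 1_n$ and $\|y\|_\infty \leq 1$, so \cref{d:frameInftyExpansion} gives $\|d\, V F V^T\|_{\op} \leq s(V)(1-\lambda)\|F\|_{\op}$, hence $|2d\langle xx^T, VFV^T\rangle| \leq 2s(1-\lambda)\|F\|_{\op}$. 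The size term $\sigma\, \partial_{t=0} s(V_t)$ is controlled by \cref{l:sizeChange} together with $\|E\|_F^2 \leq d\|E\|_{\op}^2$ and $\|F\|_F^2 \leq n\|F\|_{\op}^2$, giving $|\partial_{t=0} s(V_t)| \leq 2\|(E,F)\|_{\op}^2$. Folding the loss terms $\|E\|_{\op}^2$ and $\|F\|_{\op}^2$ into a single $\|(E,F)\|_{\op}^2$ and halving proves the first inequality.

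For the second inequality I would run the same recipe on the scalar formula of \cref{eq:rowcolChangeR}, picking $j^* \in [n]$ and $\sigma$ with $\sigma F_{j^*j^*} = \|F\|_{\op}$, so $n\|v_{j^*}\|_2^2 = s + \sigma\|F\|_{\op}$. The diagonal restoring term $2n\sigma F_{j^*j^*} \|v_{j^*}\|_2^2$ evaluates exactly to $2\|F\|_{\op}(s + \sigma\|F\|_{\op})$, contributing the leading $2s\|F\|_{\op}$. The cross-term is bounded directly by $|\langle E, v_{j^*} v_{j^*}^T\rangle| \leq \|E\|_{\op} \|v_{j^*}\|_2^2 \leq \|E\|_{\op}(s+\|F\|_{\op})/n$; notably $\infty$-expansion is \emph{not} needed here, since only a single column participates rather than a general linear combination of outer products. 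Combining with the size derivative and $\|E\|_{\op} \leq \|(E,F)\|_{\op}$ yields the second claim. The only technical subtlety throughout is the non-smoothness of $\|\cdot\|_{\op}$ at non-simple extremal eigenvalues, but the Danskin identification above handles this: our pointwise bound holds uniformly over extremal directions, so taking the minimum preserves the estimate.
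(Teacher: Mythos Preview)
Your proposal follows the same route as the paper: differentiate along the extremal direction via an envelope/Danskin argument, use the eigenvector relation $d\,VV^T x = (s+\sigma\|E\|_{\op})x$ for the restoring term, apply $\infty$-expansion to bound $\|d\,VFV^T\|_{\op} \leq s(1-\lambda)\|F\|_{\op}$ for the cross term, and control the size derivative via \cref{l:sizeChange} and \cref{f:matrixdbSmallGrad}. The second inequality likewise matches the paper's argument, which also avoids invoking $\infty$-expansion there.

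There is, however, a bookkeeping slip that prevents you from landing on the exact constant. Your claim $|\partial_{t=0}s(V_t)| \leq 2\|(E,F)\|_{\op}^2$ is not what \cref{l:sizeChange} and \cref{f:matrixdbSmallGrad} give: they yield $2\Delta \leq 2(\|E\|_{\op}^2 + \|F\|_{\op}^2)$, which can be as large as $4\|(E,F)\|_{\op}^2$. Combined with your ``regardless of sign'' lower bound on the restoring term (which discards the $+2\sigma\|E\|_{\op}^2$), the uniform-over-$\sigma$ accounting produces a quadratic loss strictly larger than $\|(E,F)\|_{\op}^2$ after halving. The paper recovers the stated constant by keeping the two signs separate: for $\sigma = +1$ the $+\|E\|_{\op}^2$ from the restoring term exactly cancels the $-\|E\|_{\op}^2$ coming from $\Delta$, leaving only $-\|F\|_{\op}^2$; for $\sigma = -1$ the size contribution is $+2\Delta \geq 0$ and is dropped, leaving only $-\|E\|_{\op}^2$ from the restoring term. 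Taking the minimum over the two cases then gives precisely $-\|(E,F)\|_{\op}^2$. Your Danskin framework already fixes $\sigma$ per extremal pair, so this refinement slots in directly; you simply need to avoid bounding each term worst-case over $\sigma$ before summing. The same sign-by-sign cancellation is needed to obtain the exact form of the $F$-bound.
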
    
    \begin{proof}
        We begin with the $E$ term. First assume $\|E\|_{\op} = \langle x x^{T}, E \rangle$ for $x \in S^{d-1}$, so we compute 
        % Note that this implies $\langle x x^{T}, d V V^{T} \rangle = s(V) + \|E\|_{\op}$.  
        \[ -\partial_{t=0} \|E(V_{t})\|_{\op} 
        = -\partial_{t=0} \langle x x^{T}, d V_{t} V_{t}^{T} - s(V_{t}) I_{d} \rangle 
        = 2 d \langle x x^{T}, E V V^{T} + V F V^{T} \rangle - 2 \Delta(V) ,     \]
        where in the first step we used the generalized Envelope Theorem of \cite{milgrom2002envelope} to write the derivative of $\|E\|_{\op} = \max_{x \in S^{d-1}} |\langle x x^{T}, E \rangle|$, 
        %and we substituted $E(V) = d V V^{T} - s I_{d}$, 
        and in the final step we used \cref{eq:rowcolChangeR} for the first term and \cref{l:sizeChange} for the change in size. We continue
        \begin{align*} 
        - \frac{1}{2} \partial_{t=0} \|E(V_{t})\|_{\op} 
        & \geq \|E\|_{\op} (s(V) + \|E\|_{\op}) - d \|V F V^{T}\|_{\op} - ( \|E\|_{\op}^{2} + \|F\|_{\op}^{2} )   
        \\ & \geq s(V) \|E\|_{\op} - s(V) (1 - \lambda) \|F\|_{\op}^{2} - \|F\|_{\op}^{2} , 
        \end{align*}
        where we used that $x$ is a unit eigenvector of $E$ and $d V V^{T}$, with eigenvalue $\|E\|_{\op}$ and $s(V) + \|E\|_{\op}$ respectively, and we bounded $\Delta(V) \leq \|E\|_{\op}^{2} + \|F\|_{\op}^{2}$ using \cref{f:matrixdbSmallGrad}. 

        For the other case $\|E\|_{\op} = -\langle x x^{T}, E \rangle$ with $x \in S^{d-1}$, we can use the same argument to bound
        % this implies $\langle x x^{T}, d V V^{T} \rangle = s(V) -\|E\|_{\op}$. We compute  
        \begin{align*} 
        -\frac{1}{2} \partial_{t=0} & \|E(V_{t})\|_{\op} 
        = \frac{1}{2} \partial_{t=0} \langle x x^{T}, d V_{t} V_{t}^{T} - s(V_{t}) I_{d} \rangle 
        = - d \langle x x^{T}, E V V^{T} + V F V^{T} \rangle + \Delta(V)    
        \\ & \geq \|E\|_{\op} (s(V) - \|E\|_{\op}) - d \|V F V^{T}\|_{\op}
        \geq s(V) (\|E\|_{\op} - (1 - \lambda) \|F\|_{\op}) - \|E\|_{\op}^{2} ,
        \end{align*}
        where we used that $x$ is an eigenvector of $d V V^{T}$ with eigenvalue $s(V) - \|E\|_{\op}$, and $\Delta(V) \geq 0$.  
        The result follows by combining the two cases. 
        % Recalling $\|(E,F)\|_{\op} = \max\{ \|E\|_{\op}, \|F\|_{\op}\}$ by \cref{d:DeltaOpError} and combining both cases gives the result.

        For the $F$-bound, we follow the same steps as above without applying $\infty$-expansion. So first, if $\|F\|_{\op} = n \|v_{j}\|_{2}^{2} - s(V)$ for some $j \in [n]$, then 
        \[ -\partial_{t=0} \|F(V_{t})\|_{\op} 
        = -\partial_{t=0} (n \|V_{t} e_{j}\|_{2}^{2} - s(V_{t}))
        = 2 n ( F_{jj} \|v_{j}\|_{2}^{2} + \langle v_{j}, E v_{j} \rangle ) - 2 \Delta(V) ,     \]
        where the first step is again by the Envelope Theorem \cite{milgrom2002envelope} applied to $\|F\|_{\op} = \max_{j \in [n]} |F_{jj}|$ for diagonal $F$,
        % and we substituted $F(V)_{jj} = n \|v_{j}\|_{2}^{2} - s(V)$, 
        and in the final step we used \cref{eq:rowcolChangeR} for the first term and \cref{l:sizeChange} for the change in size. We continue
        \begin{align*} 
        - \frac{1}{2} \partial_{t=0} \|F(V_{t})\|_{\op} 
        & \geq n \|v_{j}\|_{2}^{2} (\|F\|_{\op}  - \|E\|_{\op} ) - ( \|E\|_{\op}^{2} + \|F\|_{\op}^{2} )   
        \\ & \geq s(V) (\|F\|_{\op} - \|E\|_{\op}) - \|E\|_{\op} ( \|E\|_{\op} + \|F\|_{\op}), 
        \end{align*}
        where we used $n \|v_{j}\|_{2}^{2} = s(V) + \|F\|_{\op}$ by assumption on $j$, bounded $\langle v_{j}, E v_{j} \rangle \leq \|E\|_{\op} \|v_{j}\|_{2}^{2}$, and used \cref{f:matrixdbSmallGrad} to bound $\Delta(V)$. 

        For the other case, $\|F\|_{\op} = -(n\|v_{j}\|_{2}^{2} - s(V))$, we can use the same argument to show
        \begin{align*} 
        -\frac{1}{2}\partial_{t=0} \|F(V_{t})\|_{\op} 
        & = \frac{1}{2} \partial_{t=0} (n \|V_{t} e_{j}\|_{2}^{2} - s(V_{t}))
        = - n ( F_{jj} \|v_{j}\|_{2}^{2} + \langle v_{j}, E v_{j} \rangle ) + \Delta(V) 
        \\ & \geq n \|v_{j}\|_{2}^{2} (\|F\|_{\op} - \|E\|_{\op} ) 
        \geq (s(V) - \|F\|_{\op}) (\|F\|_{\op} - \|E\|_{\op}) .
        \end{align*}
        By \cref{d:DeltaOpError} $\|(E,F)\|_{\op} = \max\{ \|E\|_{\op}, \|F\|_{\op}\}$, so the result follows by combining both cases. 
        % and combine both cases to give the result for $F$.

        % where the first step is by the Envelope Theorem \cite{milgrom2002envelope} applied to $\|\cdot\|_{\op}$, we substituted $F(V)_{jj} = n \|v_{j}\|_{2}^{2} - s(V)$, and in the final step we used \cref{eq:rowcolChange} for the first term and \cref{l:sizeChange} for the change in size. We continue by bounding
        % - \frac{1}{2} \partial_{t=0} \|F(V_{t})\|_{\op} 
        % where we used $n \|v_{j}\|_{2}^{2} = s(V) - \|F\|_{\op}$ by assumption. 
    \end{proof}

    The above lemma shows that the convergence of $E$ and $F$ depend on the relative sizes of $\|E\|_{\op}, \|F\|_{\op}$. 
    By some case analysis, we can show one of the errors always decreases. 

    \begin{lemma} \label{p:EFInftyBothDecreaseR}
        Let frame $V \in \R^{d \times n}$ be $\eps$-doubly balanced and satisfy $(1-\lambda)$-$\infty$-expansion for $1 \geq \lambda \geq \eps$. Then for any $\delta \in [0,1)$:
        \begin{align*}
            (1+\delta) \|E\|_{\op} \geq \|F\|_{\op} \quad & \implies \qquad - \partial_{t=0} \log \|E(V_{t})\|_{\op} \geq 2 s(V) ( \lambda - \delta - \eps ); 
            \\  \|E\|_{\op} \leq (1-\delta) \|F\|_{\op} \quad & \implies \qquad - \partial_{t=0} \log \|F(V_{t})\|_{\op} \geq 2 s(V) ( \delta - 2 \eps ) . 
        \end{align*}
    \end{lemma}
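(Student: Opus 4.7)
The plan is to derive both bounds directly from \cref{l:EFInftyBound} by converting the additive estimates on $-\partial_{t=0}\|E\|_{\op}$ and $-\partial_{t=0}\|F\|_{\op}$ into logarithmic derivative bounds, then using the case hypotheses to control the ratio $\|F\|_{\op}/\|E\|_{\op}$ (resp.\ $\|E\|_{\op}/\|F\|_{\op}$), with the $\eps$-doubly balanced assumption absorbing the quadratic error terms.

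For the first case $(1+\delta)\|E\|_{\op} \geq \|F\|_{\op}$, I would take the first bound of \cref{l:EFInftyBound} and divide by $\|E\|_{\op}$ to obtain
\[ -\partial_{t=0} \log \|E(V_{t})\|_{\op} \geq 2 s(V) \bigl(1 - (1-\lambda)\|F\|_{\op}/\|E\|_{\op}\bigr) - 2 \|(E,F)\|_{\op}^{2} / \|E\|_{\op} . \]
Substituting $\|F\|_{\op}/\|E\|_{\op} \leq 1+\delta$ gives $1 - (1-\lambda)(1+\delta) = \lambda(1+\delta) - \delta$ for the main term. For the error term, since $\|(E,F)\|_{\op} = \max\{\|E\|_{\op}, \|F\|_{\op}\} \leq (1+\delta)\|E\|_{\op}$, we have $\|(E,F)\|_{\op}^{2}/\|E\|_{\op} \leq (1+\delta) \|(E,F)\|_{\op} \leq (1+\delta) s(V) \eps$ by $\eps$-double balancedness. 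Combining these yields a lower bound of $2 s(V) \bigl(\lambda(1+\delta) - \delta - (1+\delta)\eps\bigr) = 2 s(V) \bigl(\lambda - \delta - \eps + \delta(\lambda - \eps)\bigr)$. Using the hypothesis $\lambda \geq \eps$, the cross term $\delta(\lambda - \eps)$ is nonnegative and can be dropped, giving exactly $2 s(V)(\lambda - \delta - \eps)$.

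For the second case $\|E\|_{\op} \leq (1-\delta)\|F\|_{\op}$, I would take the second bound of \cref{l:EFInftyBound} and divide by $\|F\|_{\op}$. Here $\|E\|_{\op} \leq \|F\|_{\op}$ gives $\|(E,F)\|_{\op} = \|F\|_{\op}$, so the error term $\|(E,F)\|_{\op}(\|E\|_{\op} + \|F\|_{\op})/\|F\|_{\op}$ simplifies to $\|E\|_{\op} + \|F\|_{\op} \leq 2\|F\|_{\op} \leq 2 s(V)\eps$. Combined with $1 - \|E\|_{\op}/\|F\|_{\op} \geq \delta$, this yields
\[ -\partial_{t=0} \log \|F(V_{t})\|_{\op} \geq 2 s(V) \delta - 4 s(V) \eps = 2 s(V)(\delta - 2\eps) , \]
as desired.

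Both calculations are essentially routine manipulations of the bounds from \cref{l:EFInftyBound}. The only nontrivial step is the use of the hypothesis $\lambda \geq \eps$ in the first case to absorb the extra $\delta \eps$ factor from the $(1+\delta)$ blow-up in the error term; without this, one would only obtain $\lambda - \delta - (1+\delta)\eps$ rather than the clean $\lambda - \delta - \eps$. I do not anticipate any substantive obstacle beyond careful bookkeeping of the constants.
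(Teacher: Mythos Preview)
Your proposal is correct and follows essentially the same approach as the paper's proof: both divide the two inequalities of \cref{l:EFInftyBound} by $\|E\|_{\op}$ and $\|F\|_{\op}$ respectively, use the case hypothesis to bound the ratio, and absorb the quadratic error via $\eps$-double balancedness, with the assumption $\lambda \geq \eps$ used in the first case exactly as you indicate. The only cosmetic difference is that in the second case the paper keeps the slightly sharper $\|E\|_{\op}+\|F\|_{\op} \leq (2-\delta)\|F\|_{\op}$ before relaxing to $\delta - 2\eps$, whereas you go directly to $2\|F\|_{\op}$; the final bounds coincide.
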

    \begin{proof}
        In the first case $(1+\delta) \|E\|_{\op} \geq \|F\|_{\op}$, we can bound the change in $E$ as
        \begin{align*}
        - \frac{1}{2} \partial_{t=0} \log \|E(V_{t})\|_{\op} 
        & = \frac{-\partial_{t=0} \|E(V_{t})\|_{\op}}{2 \|E(V)\|_{\op}}
        \geq s(V) \bigg( 1 - (1-\lambda) \frac{\|F\|_{\op}}{\|E\|_{\op}} \bigg) - \frac{\|(E,F)\|_{\op}^{2}}{\|E\|_{\op}}
        \\ & \geq s(V) (1 - (1-\lambda)(1+\delta)) - s(V) \eps (1+\delta) 
        \geq s(V) (\lambda - \delta - \eps) , 
        \end{align*}
        where we used the $E$-bound from \cref{l:EFInftyBound}, for the third step we used the case assumption to bound $\|F\|_{\op} \leq \|(E,F)\|_{\op} \leq (1+\delta) \|E\|_{\op}$, and the $\eps$-doubly balanced condition to bound $\|(E,F)\|_{\op} \leq s(V) \eps$ according to \cref{d:DeltaOpError}, 
        and in the last step we used $1 \geq \lambda \geq \eps$. 

        Similarly, in the other case $\|E\|_{\op} \leq (1-\delta) \|F\|_{\op}$, we can bound $F$ as 
        \begin{align*} 
        - \frac{1}{2} \partial_{t=0} \log \|F(V_{t})\|_{\op} & 
        = \frac{-\partial_{t=0} \|F(V_{t})\|_{\op}}{2 \|F\|_{\op}} \geq s(V) \bigg( 1 - \frac{\|E\|_{\op}}{\|F\|_{\op}} \bigg) - \frac{\|(E,F)\|_{\op}}{\|F\|_{\op}} (\|E\|_{\op} + \|F\|_{\op})
        \\ & \geq s(V) (1 - (1-\delta)) - s(V) \eps (1 + (1-\delta))
        \geq s(V) ( \delta - \eps(2 - \delta )) , 
        \end{align*}
        where we used the $F$-bound from \cref{l:EFInftyBound}, and for the third step we used the case assumption $\|E\|_{\op} \leq (1-\delta) \|F\|_{\op}$ and the $\eps$-doubly balanced condition.
    \end{proof}
    
    % I think we can show that the function
    % \[  \max\{ \|E\|_{\op}, \frac{\|F\|_{\op}}{1+\delta} \}    \]
    % converges exponentially for $\delta = O(\eps)$ and $\lambda \gtrsim \eps$. 

    % Choosing $\delta$ appropriately, the above analysis shows that as long as we have sufficient $\infty$-expansion and the size is not too small, one of the errors should decrease exponentially.     
    % Further, we can use the robustness \cref{l:frameInftyRobustness}, to show that $\infty$-expansion is maintained throughout the dynamical system. 
    % We show the following robustness of $\infty$-expansion. 

    Choosing $\delta$ appropriately and combining this analysis with \cref{l:frameInftyRobustness} gives our result.

\begin{proof} [Proof of \cref{t:frameMainInftyR}]
    We assume $s(V) = 1$ since the doubly balanced and $\infty$-expansion condition are homogenous. 
    We require the following technical assumptions:
    %in order to show convergence:
    \begin{enumerate}
        \item[(a)] $1 \geq s(V_{t}) \geq 1 - \eps$;
        \item[(b)] $V_{t}$ is $\frac{5}{3} \eps$-doubly balanced.
        \item[(c)] $V_{t}$ is $(1-\lambda/2)$-$\infty$-expander
    \end{enumerate}
    Note that all three conditions are strictly satisfied at time $t=0$. Let $T$ be the first time some assumption fails. 
    %and all quantities are continuous, so we have that $T > 0$.
    % Let $T$ be the first time some condition fails and assume for contradiction $T < \infty$. 
    Our plan is to show exponential convergence of the error 
    \[ \forall t \in [0,T]: \qquad \|(E_{t},F_{t})\|_{\op} \lesssim \eps e^{-\Omega(\lambda t)}. \]
    We use this convergence to show $T=\infty$ and the conclusions of the theorem. 
    % If $T = \infty$, this can be used to show the conclusion of the theorem. 
    % Otherwise, the above error bound can be used to show the assumptions (a,b,c) remain strictly satisfied at time $T$, which implies $T = \infty$.
    % This can then be used to show the three conclusions of the theorem. 

    % So assume (a,b,c) are satisfied for all $t \in [0,T)$, and let $T$ be the first time one of these conditions fails. 
    % We will use exponential convergence of the error 
    %Since all quantities are continuous, this implies the assumptions are still satisfied for some time $T' > T$, which gives the required contradiction. 

    % So assume (1)-(3) hold for all $t \in [0,T]$.
    For $\delta \in [0,1)$ chosen later, consider potential function 
    \[ h(t) := \max \{ (1+\delta) \|E_{t}\|_{\op}, \|F_{t}\|_{\op} \} ,    \]
    where we use shorthand $E_{t} := E(V_{t}), F_{t} := F(V_{t})$.
    For a given time $t \in [0,T]$, 
    if the first term is larger, then we have $(1+\delta) \|E_{t}\|_{\op} \geq \|F_{t}\|_{\op}$, so we apply the first line of \cref{p:EFInftyBothDecreaseR}: 
    \[ - \partial_{t} \log h(t) = - \frac{\partial_{t} \|E(V_{t})\|_{\op}}{\|E(V_{t})\|_{\op}} \geq 2 s(V_{t}) (\frac{\lambda}{2} - \delta - \frac{5}{3} \eps)   ,   \]
    where in the last step we used assumption (b) and (c) that $V_{t}$ is $\frac{5}{3} \eps$-doubly balanced and satisfies $(1-\lambda/2)$-$\infty$-expansion conditions with $1 \geq \frac{\lambda}{2} \geq \frac{5}{3} \eps$. 
    
    Otherwise, we have $(1+\delta) \|E\|_{\op} \leq \|F\|_{\op} \implies \frac{\|E\|_{\op}}{\|F\|_{\op}} \leq (1+\delta)^{-1} = 1 - \frac{\delta}{1+\delta}$. Therefore we can apply the second line of \cref{p:EFInftyBothDecreaseR}: 
    \[ - \partial_{t} \log h(t) = - \frac{\partial_{t} \|F(V_{t})\|_{\op}}{\|F(V_{t})\|_{\op}} \geq 2 s(V_{t}) ( \frac{\delta}{1+\delta} - 2 \frac{5}{3} \eps)  ,    \]
    where again we used assumption (c) that $V_{t}$ is $\frac{5}{3} \eps$-doubly balanced. 

    Choosing $\delta = \lambda/4 \leq 1/4$ to balance the two cases, we can bound the potential function 
    \[ - \partial_{t} \log h(t) \geq 2 s(V_{t}) \min\{ \frac{\lambda}{2} - \frac{\lambda}{4} - 2 \eps, \frac{\lambda}{4(1 + \delta)} - \frac{10}{3} \eps \} \geq \frac{\lambda}{3}   ,    \]
    where we used assumption (a) $s(V_{t}) \geq 1 - \frac{\eps}{4}$, as well as $\delta \leq 1/4$, and $\lambda \gtrsim \eps$ large enough.

    Therefore, we have exponential convergence for the error:
    \[ \log \max\{ (1+\delta) \|E_{t}\|_{\op}, \|F_{t}\|_{\op} \} = \log h(t) = \log h(0) + \int_{\tilde{t}=0}^{t} \partial_{\tilde{t}} \log h(\tilde{t}) 
    \leq \log (1+\delta) \eps  -\lambda t /3  , \]
    where the first step was by definition of $h(t)$, the second was by the fundamental theorem of calculus, and in the final step we bounded the first term $h(0) \leq \eps (1+\delta)$ as $V$ is $\eps$-doubly balanced with $s(V) = 1$, and for the integral we used that $\partial_{t} \log h(t) \leq - \lambda/3$ as shown above. Rearranging gives 
    \begin{align} \label{eq:EFConvergence}
        \|E_{t}\|_{\op} \leq \eps e^{-\lambda t / 3}, \quad \|F_{t}\|_{\op} \leq (1+\delta) \eps e^{-\lambda t/3} \leq \frac{5}{4} \eps e^{-\lambda t/3} . 
    \end{align}

    We can now use this exponential convergence to prove the conclusions of the theorem. We begin by bounding the scaling up to time $T$:
    \begin{align} \label{eq:frameInftyScalingBound}
        \|L_{T} - I_{d} \|_{\op} \leq \exp( \int_{0}^{T} \|E_{t}\|_{\op} ) - 1 \leq \exp( \eps \int_{0}^{T} e^{-\lambda t/3} ) - 1 \leq \exp( \frac{3 \eps}{\lambda} ) - 1 \leq \frac{4 \eps}{\lambda} ,      \end{align}
    where the first step was by \cref{l:scalingProdIntBound}, in the second step we plugged in the convergence bound of \cref{eq:EFConvergence}, and the final steps were by Taylor approximation $e^{x} = 1 + O(x)$ along with the assumption $\lambda^{2} \gtrsim \eps$ large enough. The same argument gives the bound on right scaling
    \[ \|R_{T} - I_{n} \|_{\op} \leq \exp( \int_{0}^{T} \|F_{t}\|_{\op} ) - 1 \leq \frac{5 \eps}{\lambda} .  \]

    By \cref{l:frameInftyRobustness}, this implies $V_{T}$ satisfies $(1-\lambda')$-$\infty$-expansion with
    \begin{align} \label{eq:frameInftyBound} \lambda' \geq 1 - \frac{s(V) (1 - \lambda) + O(\eps/\lambda)}{s(V_{T})} \geq \lambda - O(\lambda + \eps) > \lambda/2 ,  
    \end{align}
    where we used assumptions (a) $s(V_{T}) \geq 1 - \eps$ in the second step, and $\lambda^{2} \gtrsim \eps$ in the final step.  

    We can also bound the change in size up to time $T$:
    \begin{align} \label{eq:frameInftySizeBound} 
    s(V) - s(V_{T}) 
    & = - \int_{0}^{T} \partial_{t} s(V_{t}) 
    = 2 \int_{0}^{T} \Delta(V_{t}) 
    \leq 2 \int_{0}^{T} (\|E_{t}\|_{\op}^{2} + \|F_{t}\|_{\op}^{2}) 
    \\ & \leq 6 \eps^{2} \int_{0}^{T} e^{-2 \lambda t / 3} 
    < \frac{6 \eps^{2}}{2 \lambda/3} < \eps ,   
    \end{align}
    where the first step was by fundamental theorem of calculus, in the second step we used \cref{l:sizeChange} for $\partial_{t} s$, in the third we used \cref{f:matrixdbSmallGrad} to bound $\Delta$, in the fourth we applied convergence from \cref{eq:EFConvergence}, and in the final inequality we used our assumption that $\lambda \gtrsim \eps$ large enough. 

    Now Assume for contradiction that $T < \infty$, so one of the assumptions (a,b,c) fails at time $T$. Assumption (a) $1 \geq s(V_{T}) > 1 - \eps$ is strictly satisfied by \cref{eq:frameInftySizeBound}. We can also bound
    \[  \frac{\|(E_{T}, F_{T})\|_{\op}}{s(V_{T})} < \frac{ 5 \eps/4}{1 - 9 \eps^{2}/\lambda} < \frac{5}{3} \eps , \]
    where we used \cref{eq:EFConvergence} to upper bound the error and the calculation above to lower bound the size. By \cref{d:DeltaOpError}, this implies $V_{T}$ is $\eps'$-doubly balanced for $\eps' < \frac{5}{3} \eps$, so assumption (b) is strictly satisfied. Finally, \cref{eq:frameInftyScalingBound} shows that $V_{T}$ satisfies $(1-\lambda')$-$\infty$-expansion for $\lambda' > \lambda/2$, so assumption (c) is also strictly satisfied. 
    By continuity, (a,b,c) must still be satisfied for some $T' > T$. But this is a contradiction, so $T=0\infty$ and the above analysis holds for all time. 
    
    We can now show the conclusions of the theorem. For item (1), note
    \[ \lim_{t \to \infty} \|(E_{t}, F_{t})\|_{\op} \lesssim \lim_{t \to \infty} \eps e^{-\lambda t/3} = 0 , \]
    i.e. $V_{\infty}$ is doubly balanced. 
    By \cref{l:scalingProdIntBound}, this implies the scaling limits $(L_{\infty}, R_{\infty})$ exist and are the frame scaling solution for $V$ according to \cref{d:frameScalingProblem}. 
    Item (2) follows from the calculation in \cref{eq:frameInftyScalingBound} applied at $T=\infty$, and similarly item (3) follows from \cref{eq:frameInftyBound}, (\ref{eq:frameInftySizeBound}). 
\end{proof}
% We first show that all the properties hold under the assumption that we maintain $\infty$-expansion for all time, along with some technical assumptions. Then we show that we can remove these assumptions via the condition $\lambda^{2} \gtrsim \eps$. 

\section{Proof of $\infty$-Expansion for Random Frames} \label{app:RandomInfty}

In this section, we prove the technical details required for our $\infty$-expansion result in \cref{t:RandomFrameInfty}. 

We first restate and prove \cref{l:pseudoInftyRelation}, showing $\beta=1/2$ pseudorandomness and $\infty$-expansion are essentially equivalent. 
We will consider pseudorandomness for other $\beta$ later in this section. 

\begin{lemma} \label{l:pseudoInftyRelationR}
Let $V \in \R^{d \times n}$ be an $\eps$-doubly balanced frame.
If $V$ $(\alpha_{\min}, \alpha_{\max}, \frac{1}{2})$-pseudorandom, then $V$ satisfies $(1-\lambda)$-$\infty$-expansion with
\[ s(V) (1-\lambda) \leq \min \{ s(V) (1+\eps) - \alpha_{\min} \quad , \quad \alpha_{\max} - s(V) (1-\eps) \}  .  \]
Conversely, if $V$ satisfies $(1-\lambda)$-$\infty$-expansion, then it is $(\alpha_{\min}, \alpha_{\max}, \frac{1}{2})$-pseudorandom with
\[ s(V) (\lambda - \eps) \leq \alpha_{\min}  \leq  \alpha_{\max} \leq s(V) (2 - (\lambda - \eps)) .    \]
\end{lemma}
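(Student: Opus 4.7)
The key observation is that the affine constraint $\sum_{j} y_{j} = 0$ combined with $\|y\|_{\infty} \leq 1$ makes $y$ equivalent to an affine shift of a fractional subset of size $n/2$. Concretely, setting $u := (y + 1_{n})/2 \in [0,1]^{n}$ gives $\sum_{j} u_{j} = n/2$, so that $\sum_{j} y_{j} v_{j} v_{j}^{T} = 2 \sum_{j} u_{j} v_{j} v_{j}^{T} - V V^{T}$. Both directions then reduce to relating $\sum_{j} u_{j} v_{j} v_{j}^{T}$ to $\frac{1}{2} V V^{T}$ via the $\beta = 1/2$ pseudorandom bounds, with the slack coming from the $\eps$-doubly balanced control on $V V^{T}$.

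For the forward direction (pseudorandom $\Rightarrow$ $\infty$-expansion), the extreme points of the polytope $\{u \in [0,1]^{n} : \sum_{j} u_{j} = n/2\}$ are exactly the indicators of size-$n/2$ subsets. By Carath\'eodory, $u = \sum_{k} \mu_{k} \cdot 1_{B_{k}}$ with $\mu_{k} \geq 0$, $\sum_{k} \mu_{k} = 1$, and $|B_{k}| = n/2$. Convex-combining the $(\alpha_{\min}, \alpha_{\max}, 1/2)$-pseudorandom bounds on each $V_{B_{k}} V_{B_{k}}^{T}$ yields $\frac{\alpha_{\min}}{2d} I_{d} \preceq \sum_{j} u_{j} v_{j} v_{j}^{T} \preceq \frac{\alpha_{\max}}{2d} I_{d}$. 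Combining with the $\eps$-doubly balanced bounds $\frac{s(V)(1-\eps)}{d} I_{d} \preceq V V^{T} \preceq \frac{s(V)(1+\eps)}{d} I_{d}$, the upper and lower eigenvalue bounds on $2 \sum_{j} u_{j} v_{j} v_{j}^{T} - V V^{T}$ give exactly the two terms $\alpha_{\max} - s(V)(1-\eps)$ and $s(V)(1+\eps) - \alpha_{\min}$ appearing in the lemma.

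For the converse ($\infty$-expansion $\Rightarrow$ pseudorandom), given any $B$ with $|B| = n/2$, the signed indicator $y := 2 \cdot 1_{B} - 1_{n} \in \{-1, +1\}^{n}$ satisfies $y \perp 1_{n}$ and $\|y\|_{\infty} = 1$, so it is a legitimate test vector for the $\infty$-expansion definition. Applying $(1-\lambda)$-$\infty$-expansion to this particular $y$ gives $\|2 V_{B} V_{B}^{T} - V V^{T}\|_{\op} \leq s(V)(1-\lambda)/d$, which sandwiches $2 V_{B} V_{B}^{T}$ between $V V^{T} \pm \frac{s(V)(1-\lambda)}{d} I_{d}$. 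Using the doubly balanced bounds on $V V^{T}$ and dividing by $2$ yields $\alpha_{\min} \geq s(V)(\lambda - \eps)$ and $\alpha_{\max} \leq s(V)(2 - (\lambda - \eps))$, as claimed.

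The argument is essentially mechanical once the correspondence $y \leftrightarrow u = (y + 1_{n})/2$ is in place. The only substantive step is the polyhedral fact that the extreme points of $\{u \in [0,1]^{n} : \sum_{j} u_{j} = n/2\}$ are precisely $\{0,1\}$-indicators of size-$n/2$ subsets, which follows by standard tight-constraint counting (any vertex must be tight on at least $n-1$ box constraints, forcing all coordinates into $\{0, 1\}$ given the single affine constraint). The one mildly subtle point will be carefully tracking both sides of the $V V^{T}$ sandwich so that each eigenvalue-direction bound ends up matched to the corresponding term in the asymmetric lemma statement.
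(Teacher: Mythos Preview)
Your approach is essentially the same as the paper's---both hinge on the fact that the extreme points of $\{y \perp 1_n, \|y\|_\infty \le 1\}$ are the signed indicators $\pm(2\cdot 1_B - 1_n)$ with $|B|=n/2$---and your converse direction is correct and matches the paper verbatim. However, there is a genuine slip in the forward direction.

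From your Loewner sandwich $\frac{\alpha_{\min}}{2d} I_d \preceq \sum_j u_j v_j v_j^T \preceq \frac{\alpha_{\max}}{2d} I_d$ together with the $\eps$-doubly balanced bounds on $VV^T$, you obtain
\[
-\frac{s(V)(1+\eps)-\alpha_{\min}}{d}\, I_d \;\preceq\; 2\sum_j u_j v_j v_j^T - VV^T \;\preceq\; \frac{\alpha_{\max}-s(V)(1-\eps)}{d}\, I_d,
\]
which bounds the top eigenvalue by one of the two terms and the bottom eigenvalue by the other. That yields only $\bigl\|\sum_j y_j v_j v_j^T\bigr\|_{\op} \le \max\{\,\cdot\,,\,\cdot\,\}/d$, whereas the lemma asserts the stronger $\min$. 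Your ``careful tracking'' remark suggests you expect the matching to just work out, but as written it does not.

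The fix is one line: also apply your sandwich to the complementary weights $u' := 1_n - u$, which again lie in the same polytope, and write $\sum_j y_j v_j v_j^T = VV^T - 2\sum_j u'_j v_j v_j^T$. This swaps the roles of $\alpha_{\min}$ and $\alpha_{\max}$ in the upper/lower eigenvalue bounds, and intersecting the two sandwiches gives the $\min$. The paper avoids this extra step by working at the scalar level: for fixed $x\in S^{d-1}$ it uses the identity $\sup_{y\in P}\langle a,y\rangle = 2\max_{A}\langle a,1_A\rangle - \langle a,1_n\rangle = \langle a,1_n\rangle - 2\min_{B}\langle a,1_B\rangle$ with $a_j=\langle x,v_j\rangle^2$, so the single quantity $\sup_y \langle a,y\rangle$ is bounded two different ways and the $\min$ falls out automatically.
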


    \begin{remark}
    We always assume $\beta n$ is an integer for simplicity. Technically, the condition for non-integer $\beta n$ should involve a fractional coordinate. For example if $\beta n = k + z$ where $z \in (0,1)$ then $\beta$-pseudorandomness should be the condition
    \[ \beta \frac{\alpha_{\min}}{d} I_{d} \preceq V_{B} V_{B}^{T} + z v_{j} v_{j}^{T} \preceq \beta \frac{\alpha_{\max}}{d} I_{d}    \]
    for all $|B| = k, j \not\in B$. For simplicity we will focus on the integer case as the fractional definition only differs by lower order terms.  
\end{remark}

\begin{proof} [Proof of \cref{l:pseudoInftyRelationR}]
    It can be shown by majorization that the vertices of the polytope
    \[ P := 1_{n}^{\perp} \cap B_{\infty} = \{y \in \R^{n} \mid \langle y, 1_{n} \rangle = 0, \|y\|_{\infty} \leq 1 \}   \]
  are of the form $1_{A} - 1_{B}$ for disjoint sets $|A| = |B| = \lfloor n/2 \rfloor$. For simplicity, we assume $n$ is even so these vertices can be rewritten as $1_{n} - 2 1_{B} = 2 1_{A} - 1_{n}$. This implies that for any $a \in \R^{n}$, 
    % Now fix $x \in S^{d-1}$ and let $a_{j} := \langle x, v_{j} \rangle^{2}$. Note 
    \begin{align} \label{eq:PVertices}
        \sup_{y \in P} \langle a, y \rangle = 2 \max_{A} \langle a, 1_{A} \rangle - \langle a, 1_{n} \rangle = \langle a, 1_{n} \rangle - 2 \min_{B} \langle a, 1_{B} \rangle
    \end{align}  
    where $|A| = |B| = n/2$. We relate $\infty$-expansion and pseudorandomness by applying the above for $a_{j} := \langle x, v_{j} \rangle^{2}$ with $x \in S^{d-1}$. 
    %Further note $\langle a, 1_{n} \rangle = \langle x x^{T}, V V^{T} \rangle \in (1 \pm \eps) \frac{s(V)}{d}$
    % as $V$ is $\eps$-doubly balanced according to \cref{d:DeltaOpError}. The above holds for every $x \in S^{d-1}$ so we can show the lemma. 
    
    In particular, let $x \in S^{d-1}, y \in P$ satisfy $\frac{s(V) (1-\lambda)}{d} = | \langle x x^{T}, V Y V^{T} \rangle|$. If $V$ is $(\alpha_{\min}, \alpha_{\max}, \frac{1}{2})$-pseudorandom according to \cref{d:framePseudo} and $\eps$-doubly balanced according to \cref{d:DeltaOpError}, then 
    \[ \frac{s(V) (1-\lambda)}{d} \leq 2 \max_{A} \langle x x^{T}, V_{A} V_{A}^{T} \rangle - \langle x x^{T}, V V^{T} \rangle \leq \frac{\alpha_{\max}}{d} - \frac{s(V)(1-\eps)}{d} ,  \]
    where we used \cref{eq:PVertices} in the first step, and in the last step we upper bounded $V_{A} V_{A}^{T}$ using $\alpha_{\max}$ and the other term using the $\eps$-doubly balanced condition. By the same argument we get
    \[ \frac{s(V) (1-\lambda)}{d} \leq \langle x x^{T}, V V^{T} \rangle  - 2 \min_{B} \langle x x^{T}, V_{B} V_{B}^{T} \rangle \leq \frac{s(V)(1+\eps)}{d} - \frac{\alpha_{\min}}{d}  ,  \]
    where we instead lower bounded $V_{B} V_{B}^{T}$ using $\alpha_{\min}$. 

    Conversely, let $x \in S^{d-1}, |B| = n/2$ satisfy $2 \langle x x^{T}, V_{B} V_{B}^{T} \rangle = \frac{\alpha_{\max}}{d}$. If $V$ satisfies $(1-\lambda)$-$\infty$-expansion, then for $y := 2 1_{B} - 1_{n}$
    \[ \frac{\alpha_{\max}}{d} = \langle x x^{T}, (2 V_{B} V_{B}^{T} - V V^{T}) + V V^{T} \rangle \leq \| V Y V^{T} \|_{\op} + \|V V^{T}\|_{\op} \leq \frac{s(V) (1 - \lambda)}{d} + \frac{s(V) (1 + \eps)}{d} ,   \]
    where in the second step we substituted $Y := \diag(y) = \diag(2 1_{B} - 1_{n})$, and in the final step we used $\infty$-expansion \cref{d:frameInftyExpansion} to bound $\|V Y V^{T}\|_{\op}$ and the $\eps$-doubly balanced condition \cref{d:DeltaOpError} to bound $\|V V^{T}\|_{\op}$. By the same argument, we can lower bound
    \[ \frac{\alpha_{\min}}{d} = \min_{|B| = n/2} \langle x x^{T}, V V^{T} - (V V^{T} - 2 V_{B} V_{B}^{T}) \rangle \geq \frac{s(V) (1 - \eps)}{d} - \frac{s(V) (1 - \lambda)}{d} .  \]
\end{proof}

Our plan is to show pseudorandomness for Gaussian frames, and then show that this implies pseudorandomness for random unit vectors. 
For this, we require that the normalization step does not affect the pseudorandom property too much. 

\begin{lemma} \label{l:normalizationPseudoR}
    For frame $G$ that is $(\alpha_{\min}, \alpha_{\max}, \beta)$-pseudorandom, normalized frame $v_{j} = \frac{g_{j}}{\|g_{j}\|_{2}}$ has size $s(V) = n$ and is $(\alpha_{\min}(V), 2 \beta)$-pseudorandom for 
    \[ \alpha_{\min}(V) \geq s(V) \frac{\alpha_{\min}}{2 \alpha_{\max}} .    \]
\end{lemma}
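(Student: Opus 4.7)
The plan is straightforward. The size equality $s(V) = \sum_{j} \|v_{j}\|_{2}^{2} = n$ is immediate since every $v_{j}$ is a unit vector, so the main work is establishing the lower bound in the pseudorandom condition for subsets of size $2\beta n$.

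Fix an arbitrary subset $B \subseteq [n]$ with $|B| = 2\beta n$. The goal is to show
\[ V_{B} V_{B}^{T} \succeq 2\beta \cdot \frac{\alpha_{\min}(V)}{d} I_{d}. \]
The main idea is to restrict attention to a well-chosen sub-subset $B' \subseteq B$ of size $\beta n$ where every column of $G$ has small norm, so that the normalization step cannot shrink $g_{j} g_{j}^{T}$ by too much.

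Concretely, let $B'$ consist of the $\beta n$ indices in $B$ with smallest $\|g_{j}\|_{2}^{2}$. Then every index in $B \setminus B'$ has norm at least $\max_{j \in B'} \|g_{j}\|_{2}^{2}$, so
\[ \beta n \cdot \max_{j \in B'} \|g_{j}\|_{2}^{2} \leq \sum_{j \in B \setminus B'} \|g_{j}\|_{2}^{2} = \tr(G_{B \setminus B'} G_{B \setminus B'}^{T}) \leq \beta \alpha_{\max}, \]
where the last inequality uses the pseudorandom upper bound applied to the $\beta n$-sized set $B \setminus B'$. This yields $\max_{j \in B'} \|g_{j}\|_{2}^{2} \leq \alpha_{\max}/n$. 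I would then discard columns outside $B'$ and normalize term-by-term:
\[ V_{B} V_{B}^{T} \succeq \sum_{j \in B'} \frac{g_{j} g_{j}^{T}}{\|g_{j}\|_{2}^{2}} \succeq \frac{1}{\max_{j \in B'} \|g_{j}\|_{2}^{2}} \cdot G_{B'} G_{B'}^{T} \succeq \frac{n}{\alpha_{\max}} \cdot \frac{\beta \alpha_{\min}}{d} I_{d} = 2\beta \cdot \frac{n \alpha_{\min}/(2\alpha_{\max})}{d} I_{d}, \]
using the pseudorandom lower bound on $G_{B'} G_{B'}^{T}$. Since $s(V) = n$, this gives exactly $\alpha_{\min}(V) \geq s(V) \alpha_{\min}/(2\alpha_{\max})$ as claimed.

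There is no real obstacle here; the only mildly subtle point is choosing $B'$ to be the low-norm half of $B$ (rather than trying to work with $B$ directly), and recognizing that the pseudorandom upper bound on $B \setminus B'$ forces every column in $B'$ to have norm at most $\alpha_{\max}/n$. The upper bound in the original $(\alpha_{\min}, \alpha_{\max}, \beta)$-pseudorandomness is thus essential even though the conclusion only retains a lower bound, which is why the stated ratio $\alpha_{\min}/\alpha_{\max}$ appears in $\alpha_{\min}(V)$. The factor of $2$ in $2\beta$ is precisely what allows the halving trick to go through.
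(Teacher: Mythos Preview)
Your proposal is correct and follows essentially the same argument as the paper: select the $\beta n$ columns of $B$ with smallest norm, use the pseudorandom upper bound on the complementary half to cap their maximum norm by $\alpha_{\max}/n$, then apply the pseudorandom lower bound to this half. The paper's proof is identical up to notation (it calls your $B'$ by $S$).
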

\begin{proof}
    Consider $B \subseteq [n]$ with $|B| = 2 \beta n$. 
    By relabeling, we assume $B = \{1, ..., 2 \beta n\}$ and the norms are in increasing order $\|g_{1}\|_{2} \leq ... \leq \|g_{|B|}\|_{2}$. Letting $S := \{1, ..., \beta n\}$ be the half containing $\|g_{j}\|_{2}$ of smaller norm, we can lower bound 
    \begin{align} \label{eq:normPseudo1} V_{B} V_{B}^{T} = \sum_{j \in B} \frac{g_{j} g_{j}^{T}}{\|g_{j}\|_{2}^{2}} \succeq \sum_{j \in S} \frac{g_{j} g_{j}^{T}}{\|g_{j}\|_{2}^{2}} \succeq \frac{1}{\|g_{|S|}\|_{2}^{2}} G_{S} G_{S}^{T}  ,   \end{align}
    where the first step is by definition $v_{j} := \frac{g_{j}}{\|g_{j}\|_{2}}$, in the second step we consider the smaller subset $S \subseteq B$, and in the final step we use that the norms are in increasing order so $\|g_{j \in S}\|_{2} \geq \|g_{|S|}\|_{2}^{2}$. We can lower bound $G_{S} G_{S}^{T}$ using pseudorandomness, so we require an upper bound for $\|g_{|S|}\|_{2}^{2}$:
    \[ \|g_{|S|}\|_{2}^{2} \leq \E_{j \in B-S} \|g_{j}\|_{2}^{2} = \frac{1}{\beta n} \langle I_{d}, G_{B-S} G_{B-S}^{T} \rangle \leq \frac{1}{\beta n} \alpha_{\max} \beta = \frac{\alpha_{\max}}{n} ,    \]
    where in the first step we used that the norms are in increasing order so $\|g_{|S|}\|_{2} \leq \|g_{j \in B-S}\|_{2}$, and in the third step we used the pseudorandomness upper bound according to \cref{d:framePseudo} for $G$ with subset $|B-S| = \beta n$. Plugging this into the previous calculation gives
    \begin{align} \label{eq:normPseudo2} 
    V_{B} V_{B}^{T} 
    \succeq \frac{1}{\|g_{|S|}\|_{2}^{2}} G_{S} G_{S}^{T} 
    \succeq \frac{n}{\alpha_{\max}} \frac{\alpha_{\min} \beta}{d} I_{d} = n \frac{\alpha_{\min}}{2 \alpha_{\max}} \frac{|B|}{n} \frac{1}{d} I_{d}, 
    \end{align}
    where the first step was from \cref{eq:normPseudo1}, in the second step we used the upper bound $\|g_{|S|}\|_{2}^{2} \leq \frac{\alpha_{\max}}{n}$ and the lower bound $G_{S} G_{S}^{T} \succeq \frac{\alpha_{\min} \beta}{d} I_{d}$ by pseudorandomness of $G$ for $|S| = \beta n$, and in the final step we used that $|B| = 2 \beta n$. 
    Since $B \subseteq [n]$ was arbitrary, and noting $s(V) = n$ by normalization, this verifies $\alpha_{\min}$-pseudorandomness according to \cref{d:framePseudo}. 
\end{proof}

In the remainder, we prove \cref{p:GaussianPseudo} showing Gaussian frames satisfy the pseudorandom property with high probability. 

    \subsection{Preliminaries: Concentration and Approximation}

    We will use Gaussian concentration and some standard approximation arguments. 
  
    \begin{theorem} [Corollary 5.35 of \cite{vershynin2010introduction}] \label{t:GaussianConcentration}
For $d \leq m$, let $G \in \R^{d \times m}$ be a random matrix whose entries are independent standard normal random variables $G_{ij} \sim N(0,1)$ for $i \in [d], j \in [m]$. 
%Equivalently or equivalently standard Gaussian columns $g_{1}, ..., g_{n} \sim N(0,I_{d})$. 
Then for any $\theta > 0$,
\[ \sqrt{m} - \sqrt{d} - \theta \leq \sigma_{\min}(G) \leq 
\sigma_{\max}(G) \leq \sqrt{m} + \sqrt{d} + \theta     \]
with probability at least $1 - 2 e^{-\theta^{2}/2}$.
\end{theorem}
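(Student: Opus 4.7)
The plan is to combine two classical ingredients from high-dimensional probability: a Gaussian comparison inequality that handles the expectation of the extreme singular values, followed by Lipschitz concentration that handles the deviation from expectation.

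For the expectations, I would appeal to Gordon's Gaussian min-max comparison inequality. Writing $\sigma_{\max}(G) = \sup_{x \in S^{d-1}, y \in S^{m-1}} \langle x, G y \rangle$ and $\sigma_{\min}(G) = \inf_{x \in S^{d-1}} \sup_{y \in S^{m-1}} \langle x, G y \rangle$, I would compare these Gaussian min-max functionals to the decoupled Gaussian process $(x,y) \mapsto \langle g, x \rangle + \langle h, y \rangle$ with independent $g \sim N(0,I_d)$ and $h \sim N(0,I_m)$. Slepian's lemma handles the sup case and Gordon's refinement covers the min-max case; together they yield $\E \, \sigma_{\max}(G) \le \sqrt{d} + \sqrt{m}$ and $\E \, \sigma_{\min}(G) \ge \sqrt{m} - \sqrt{d}$, using $\E \|g\|_2 \le \sqrt{d}$ and $\E \|h\|_2 \le \sqrt{m}$.

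For the high-probability upgrade, I would use that $G \mapsto \sigma_{\max}(G)$ and $G \mapsto \sigma_{\min}(G)$ are both $1$-Lipschitz with respect to the Frobenius norm on $\R^{d \times m}$, a consequence of Weyl's perturbation theorem for singular values. Viewing the entries of $G$ as a standard Gaussian vector in $\R^{dm}$ and applying the Gaussian concentration inequality for $1$-Lipschitz functions then gives a tail of the form $2 e^{-\theta^2/2}$ around each expected value. Combining this deviation bound with the Gordon expectation bounds above yields the claimed two-sided statement.

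The main obstacle is Gordon's min-max comparison, which is the delicate direction. If one wished to avoid Gordon's theorem, a more elementary fallback is an $\varepsilon$-net argument: take $(1/4)$-nets $\mathcal{N}_d, \mathcal{N}_m$ of $S^{d-1}, S^{m-1}$ of cardinalities at most $9^d, 9^m$ respectively, use that $\langle x, G y \rangle \sim N(0,1)$ for each fixed pair $(x,y) \in \mathcal{N}_d \times \mathcal{N}_m$, and combine a Gaussian tail bound with a union bound together with the standard net-to-sphere estimate $\sigma_{\max}(G) \le 2 \max_{(x,y) \in \mathcal{N}_d \times \mathcal{N}_m} \langle x, G y \rangle$. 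For the lower bound, I would write $\sigma_{\min}(G) = \inf_{x \in S^{d-1}} \|G^T x\|_2$, use that $G^T x \sim N(0, I_m)$ for each fixed $x$, and combine chi-square concentration of $\|G^T x\|_2^2$ with a net argument whose discretization error is absorbed by the already-proved $\sigma_{\max}$ bound. This elementary route recovers the same qualitative conclusion with slightly worse absolute constants.
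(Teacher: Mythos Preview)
The paper does not prove this statement at all: it is quoted as Corollary~5.35 of Vershynin~\cite{vershynin2010introduction} and used as a black box, so there is no ``paper's own proof'' to compare against. Your proposed argument (Gordon's min--max comparison for the expectations combined with Gaussian Lipschitz concentration for the tails) is correct and is in fact exactly the proof given in the cited reference, so nothing is missing.
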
  

      $\sigma_{\min}(G) \geq 0$ always, so the lower tail bound becomes trivial for $\theta > \sqrt{m}$. 
      This implies the smallest failure probability we can achieve with a non-trivial lower bound is $\leq \exp( - m/2 )$.
      In order to get higher probability guarantees, we use the following stronger result for the lower tail:

     % Note that the above has maximum failure probability  For our results in \cref{s:FrameInfty}, we will require higher probability results for the lower bound, for which we have the following result

  \begin{lemma} [Fact 4.5.7(3) in \cite{KLLR}] \label{l:chiMultLowerTail}
  For $g \sim N(0,I_{m})$ and any $c \geq 2$,  
  \[ Pr[ \|g\|_{2}^{2} \leq e^{-c} m ] \leq \exp( -cm/4 )  .  \]
  \end{lemma}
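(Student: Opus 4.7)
The plan is to apply the standard Chernoff/Laplace-transform method to the chi-squared distribution. Since $\|g\|_2^2 = \sum_{i=1}^m g_i^2$ with $g_i \sim N(0,1)$ independent, the moment generating function is known:
\[ \E\bigl[e^{-t \|g\|_2^2}\bigr] = (1+2t)^{-m/2} \qquad \text{for } t > -\tfrac{1}{2}. \]
I would begin from Markov's inequality on the exponential: for any $t > 0$,
\[ \Pr\bigl[\|g\|_2^2 \leq e^{-c} m\bigr] = \Pr\bigl[e^{-t\|g\|_2^2} \geq e^{-t e^{-c} m}\bigr] \leq (1+2t)^{-m/2} \cdot e^{t e^{-c} m}. \]

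The next step is to optimize the bound over $t > 0$. Differentiating the log of the right-hand side with respect to $t$ yields the optimizer $1 + 2t = e^c$, i.e.\ $t = (e^c - 1)/2$, which is valid since $c \geq 2 > 0$. Substituting back simplifies the exponent to $\tfrac{m}{2}\bigl(1 - e^{-c} - c\bigr)$, giving
\[ \Pr\bigl[\|g\|_2^2 \leq e^{-c} m\bigr] \leq \exp\!\Bigl(\tfrac{m}{2}\bigl(1 - e^{-c} - c\bigr)\Bigr). \]

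The final step is an elementary estimate: we want to check that $\tfrac{m}{2}(1 - e^{-c} - c) \leq -cm/4$, equivalently $1 - e^{-c} \leq c/2$. Since $1 - e^{-c} \leq 1$ and $c/2 \geq 1$ whenever $c \geq 2$, this holds trivially for the stated range. There is no real obstacle here; the only thing to be careful about is verifying that the Chernoff optimizer lies in the valid range $t > 0$ (which requires $c > 0$) and that the slack inequality $1 - e^{-c} \leq c/2$ uses the hypothesis $c \geq 2$ in an essentially lossless way. The bound is therefore the best one obtainable from the MGF method up to constants in the exponent.
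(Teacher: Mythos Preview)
Your proof is correct. The paper does not actually prove this lemma; it simply cites it as Fact~4.5.7(3) from \cite{KLLR} and moves on, so there is nothing to compare against beyond noting that your Chernoff/MGF argument is the standard and expected route for chi-squared lower-tail bounds.
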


  We also require the following standard approximation arguments. 
  
      % We will also want to bound the smallest singular values of random matrices with a similar strategy. Therefore, we will discretize $S^{d-1}$ and perform a standard approximation argument for $\inf_{\xi \in S^{d-1}} X_{\xi}$. The following lemma is helpful to bound well-conditioned matrices. 

    \begin{fact} [see e.g. \cite{TaoBook}] \label{f:setBound}
        For $0 \leq \beta \leq 1/2$, ${n \choose \beta n} \leq 2^{\beta n(1 + \log_{2}(1/\beta))}$.         
    \end{fact}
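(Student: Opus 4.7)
The plan is a direct application of a standard combinatorial estimate. I would begin with the elementary bound $\binom{n}{k} = \tfrac{n(n-1)\cdots(n-k+1)}{k!} \leq \tfrac{n^k}{k!}$, obtained by overcounting each factor in the numerator by $n$. Combining this with $k! \geq (k/e)^k$, which follows from $e^k = \sum_{j \geq 0} k^j/j! \geq k^k/k!$, yields the classical inequality $\binom{n}{k} \leq (en/k)^k$. Substituting $k = \beta n$ gives $\binom{n}{\beta n} \leq (e/\beta)^{\beta n}$, so
\[
\log_2 \binom{n}{\beta n} \leq \beta n \bigl( \log_2 e + \log_2(1/\beta) \bigr),
\]
which is of the stated form up to the leading constant.

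To obtain the exact constant $1$ in place of $\log_2 e$, I would instead invoke the sharper entropy bound $\binom{n}{\beta n} \leq 2^{n H(\beta)}$, where $H(\beta) = \beta \log_2(1/\beta) + (1-\beta)\log_2(1/(1-\beta))$ is the binary entropy. This estimate is immediate from the probabilistic identity
\[
1 = (\beta + (1-\beta))^n \geq \binom{n}{\beta n}\, \beta^{\beta n}(1-\beta)^{(1-\beta)n},
\]
by rearranging and taking logarithms (since the left-hand side is a single non-negative term in the binomial expansion). The remaining step is to control $H(\beta)$ by $\beta(1+\log_2(1/\beta))$ for $\beta \in [0, 1/2]$, which reduces to a one-variable inequality in $\beta$ obtained by cancelling the common $\beta\log_2(1/\beta)$ term on both sides.

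No substantive obstacle is anticipated: this is a routine textbook binomial estimate. The only mild technicality is that $\beta n$ should be treated as an integer, which the remark preceding the fact already addresses via a rounding convention. Given the simplicity of the argument, I would present the entropy-based proof as the cleanest route to the stated form.
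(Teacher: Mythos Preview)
The paper does not prove this fact; it is simply cited as standard. Your first route via $\binom{n}{k}\le (en/k)^k$ is correct and yields $\binom{n}{\beta n}\le 2^{\beta n(\log_2 e + \log_2(1/\beta))}$, which is exactly what the paper needs in its union-bound application (only the order of magnitude of the exponent matters there).

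Your second route, however, has a genuine gap. After invoking $\binom{n}{\beta n}\le 2^{nH(\beta)}$ and cancelling the common $\beta\log_2(1/\beta)$ term, you are left with the claim
\[
(1-\beta)\log_2\!\frac{1}{1-\beta}\ \le\ \beta \qquad (0\le\beta\le 1/2),
\]
and this inequality is \emph{false}. Setting $x=1-\beta$ and $f(x)=(1-x)\ln 2 + x\ln x$, one checks that $f$ is convex on $[1/2,1]$ with $f(1/2)=f(1)=0$ and interior minimum at $x=2/e$, where $f(2/e)=\ln 2 - 2/e<0$. Concretely, at $\beta=1/4$ one has $H(1/4)\approx 0.811$ while $\beta(1+\log_2(1/\beta))=3/4=0.75$, so $H(\beta)>\beta(1+\log_2(1/\beta))$. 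In fact the stated bound with constant $1$ is not literally true: for $n=100$, $\beta=1/4$ one has $\binom{100}{25}\approx 2.4\times 10^{23}>2^{75}$. The correct constant is $\log_2 e\approx 1.44$, as your first argument gives; this discrepancy is harmless for the paper since the application in \cref{p:GaussianPseudoR} absorbs any fixed constant into the choice of $c$.
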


    \begin{fact} [Lemma 4.10 in \cite{PisierBook}] \label{f:netBound} 
    $\mathcal{N}$ is called an $\eta$-net of $S^{d-1}$ if, for any $x \in S^{d-1}$ there is $y \in \mathcal{N}$ such that $\|y - x\|_{2} \leq \eta$. 
    For any $\eta > 0$, there is an $\eta$-net $\mathcal{N} \subseteq S^{d-1}$ with cardinality
    \[ |\mathcal{N}| \leq (1 + 2/\eta)^{d} .    \]
    \end{fact}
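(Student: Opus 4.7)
The plan is to use the standard volumetric packing argument. I would start by letting $\mathcal{N} \subseteq S^{d-1}$ be a \emph{maximal} $\eta$-separated subset, meaning any two distinct points of $\mathcal{N}$ are at distance greater than $\eta$ from each other, and no further point of $S^{d-1}$ can be added without violating this. Such a set exists by Zorn's lemma (or, since $S^{d-1}$ is compact and the separation condition is a finite check, by a simple greedy construction). Maximality immediately gives the $\eta$-net property: for any $x \in S^{d-1}$, there must be some $y \in \mathcal{N}$ with $\|x - y\|_{2} \leq \eta$, since otherwise $\mathcal{N} \cup \{x\}$ would still be $\eta$-separated, contradicting maximality.

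Next I would bound $|\mathcal{N}|$ by a volume comparison in $\R^{d}$. The open Euclidean balls $B(y, \eta/2) \subseteq \R^{d}$ for $y \in \mathcal{N}$ are pairwise disjoint by the $\eta$-separation. On the other hand, since every $y \in \mathcal{N}$ has $\|y\|_{2} = 1$, each such ball is contained in the larger ball $B(0, 1 + \eta/2)$. Writing $\vol_{d}$ for Lebesgue measure in $\R^{d}$, this gives
\[
|\mathcal{N}| \cdot \vol_{d}\bigl( B(0, \eta/2) \bigr) \;\leq\; \vol_{d}\bigl( B(0, 1 + \eta/2) \bigr).
\]
Since the volume of a Euclidean ball scales as the $d$-th power of its radius, the common volume constant cancels and one gets
\[
|\mathcal{N}| \;\leq\; \left( \frac{1 + \eta/2}{\eta/2} \right)^{d} \;=\; \left( 1 + \frac{2}{\eta} \right)^{d},
\]
which is the claimed bound.

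There is essentially no obstacle here: the argument is purely geometric, the two ingredients (maximal separated sets are nets; disjoint balls admit volume bounds) are elementary, and the dimension dependence comes for free from the scaling of Lebesgue measure. The only mild subtlety is making sure the maximal set exists and that the inclusion $B(y, \eta/2) \subseteq B(0, 1 + \eta/2)$ is used with the correct radius so that the ratio inside the parentheses is exactly $1 + 2/\eta$ rather than something slightly worse like $1 + 2/\eta + \text{const}$. Since the statement is quoted from Pisier's book and used only as a black box in the pseudorandomness argument, I would not chase sharper constants.
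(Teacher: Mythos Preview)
Your argument is correct and is exactly the standard volumetric packing argument. Note, however, that the paper does not actually prove this fact: it is stated as a black-box citation to Pisier's book (Lemma~4.10 there) and used without proof, so there is no ``paper's own proof'' to compare against --- your write-up simply fills in the omitted standard details.
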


  \begin{lemma} \label{l:netMultLowerBound}
  For $A \in \R^{d \times m}$, if $\mathcal{N}$ is an $\eta$-net of $S^{d-1}$, then 
  \[ \sigma_{\min}(A) \geq \inf_{x \in \mathcal{N}} \|x^{T} A\|_{2} - \eta \|A\|_{\op} .   \]
  % \[ \sigma_{\min}(X) \geq \inf_{\xi \in \mathcal{N}} \|X \xi\|_{2} - \eta \|X\|_{\op} .   \]
  \end{lemma}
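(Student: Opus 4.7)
The plan is a standard net argument relating the singular value to the approximate supremum over a net. Since $d \leq m$ in context, the smallest singular value is characterized as $\sigma_{\min}(A) = \inf_{x \in S^{d-1}} \|x^T A\|_2$ (the "wide matrix" formulation), so it suffices to control this infimum via the net $\mathcal{N}$.

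First I would fix $x^* \in S^{d-1}$ achieving (or approximately achieving, by taking a limit) the infimum $\|x^{*T} A\|_2 = \sigma_{\min}(A)$. Then by the net property there is some $y \in \mathcal{N}$ with $\|y - x^*\|_2 \leq \eta$. Writing $y = x^* + (y - x^*)$ and applying the triangle inequality to $\|y^T A\|_2$ yields
\[ \|y^T A\|_2 \leq \|x^{*T} A\|_2 + \|(y - x^*)^T A\|_2 \leq \sigma_{\min}(A) + \eta \|A\|_{\op}, \]
where in the second term I use the definition of operator norm $\|(y-x^*)^T A\|_2 \leq \|y - x^*\|_2 \cdot \|A\|_{\op} \leq \eta \|A\|_{\op}$. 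Rearranging gives $\sigma_{\min}(A) \geq \|y^T A\|_2 - \eta \|A\|_{\op}$, and since $y \in \mathcal{N}$, this is at least $\inf_{z \in \mathcal{N}} \|z^T A\|_2 - \eta \|A\|_{\op}$, which is the stated bound.

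There is no real obstacle here; the only point worth noting is that the infimum defining $\sigma_{\min}(A)$ is attained by compactness of $S^{d-1}$ and continuity of $x \mapsto \|x^T A\|_2$, so a minimizer $x^*$ exists and no limiting argument is needed. The proof is essentially a one-line application of the triangle inequality together with the definition of the operator norm.
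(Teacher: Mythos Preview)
Your proof is correct and is exactly the standard net argument the paper has in mind; indeed, the paper explicitly omits the proof as standard, and your one-line triangle-inequality plus operator-norm bound is that standard argument.
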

  % \begin{proof}
  % Let $\xi_{*} := \arg\inf_{\xi \in S^{d-1}} \|X \xi\|_{2}$ be the optimizer. 
  % $N$ being an $\eta$-net, so by \cref{d:netPacking} we can decompose $\xi_{*} = \xi + \xi'$ for $\xi \in N, \xi' \in \eta B_{2}^{n}$. Letting $\sigma := \inf_{\xi \in N} \|X \xi\|_{2}$ for shorthand, we can bound the above as
  % \[ \inf_{\xi \in S^{d-1}} \|X \xi\|_{2} = \|X (\xi + \xi')\|_{2}
  % \geq \|X \xi\|_{2} - \|X \xi'\|_{2} \geq \sigma - \eta \|X\|_{\op} , \] 
  % where the first step was by definition of $\xi_{*} = \xi + \xi'$, the second step was by the triangle inequality, and in the final step we bounded the first term by definition of $\sigma$ as $\xi \in N$ and the second term by definition of the operator norm.
  % \end{proof}

  We omit the proof of the above as it is standard. 
  Note that $\sigma_{\min} \geq 0$ always, so the above bound is only non-trivial when $\eta < \inf_{x \in \mathcal{N}} \|x^{T} A\|_{2}/\|A\|_{\op}$. 
  % This will be useful for random matrices that have bounded condition number. 

  \subsection{Proof of Gaussian Pseudorandomness} \label{ss:GaussianPseudo}

  In this subsection, we use the Gaussian concentration to prove pseudorandomness. 

\begin{theorem} \label{p:GaussianPseudoR}
    Let $G \in \R^{d \times n}$ be a Gaussian frame with entries $G_{ij} \sim N(0,1)$, and consider $0 < \beta \leq 1/2$. If $n \gtrsim d/\beta$ large enough, then $G$ is $(\alpha_{\min}, \alpha_{\max}, \beta)$-pseudorandom according to \cref{d:framePseudo} with
    \[ \alpha_{\min} \gtrsim nd \cdot \beta^{O(1)}, \quad \alpha_{\max} \lesssim nd ( 1+\log(1/\beta)) ,  \]
    with probability $\geq 1 - \exp( - \beta n )$. 
\end{theorem}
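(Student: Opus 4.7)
The plan is to fix a subset $B \subseteq [n]$ with $|B| = \beta n$, bound $\sigma_{\min}(G_B)$ and $\sigma_{\max}(G_B)$ with sufficiently small failure probability, and then union bound over the $\binom{n}{\beta n}$ choices of $B$. The number of subsets is at most $2^{\beta n (1+\log_2(1/\beta))}$ by \cref{f:setBound}, so any per-subset failure probability needs roughly an additional $\exp(-\beta n(1+\log(1/\beta)))$ of slack to reach the target $\exp(-\beta n)$.

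For the upper bound I would apply \cref{t:GaussianConcentration} to $G_B \in \R^{d \times \beta n}$ with deviation parameter $\theta \asymp \sqrt{\beta n(1+\log(1/\beta))}$. Since $n \gtrsim d/\beta$ gives $\sqrt{d} \lesssim \sqrt{\beta n}$, this yields $\sigma_{\max}(G_B) \lesssim \sqrt{\beta n(1+\log(1/\beta))}$ with failure probability at most $2 \exp(-\theta^2/2)$, which beats the union-bound cost. Squaring and converting back to the pseudorandomness normalization gives $G_B G_B^T \preceq \beta \alpha_{\max}/d \cdot I_d$ with $\alpha_{\max} \lesssim nd(1+\log(1/\beta))$.

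For the lower bound the standard Gaussian concentration is insufficient because the trivial bound $\sigma_{\min} \geq 0$ caps the achievable failure probability at roughly $\exp(-\beta n/2)$, which cannot absorb the $(1/\beta)^{\beta n}$ factor from the subset union bound. Here I would use the sharper chi-squared lower tail \cref{l:chiMultLowerTail} along with a net argument. Fix an $\eta$-net $\mathcal{N}$ of $S^{d-1}$ with $|\mathcal{N}| \leq (3/\eta)^d$ via \cref{f:netBound}. For any unit vector $x \in \mathcal{N}$, rotational invariance makes $x^T G_B$ distributed as $N(0, I_{\beta n})$, so \cref{l:chiMultLowerTail} with $c = C(1+\log(1/\beta))$ for a large constant $C$ yields
\[ \Pr\bigl[\|x^T G_B\|_2^2 \leq e^{-c} \beta n \bigr] \leq \exp(-c\beta n/4). \]
Union bounding over $\mathcal{N}$ and over subsets $B$ gives total failure probability at most $\exp\bigl(d\log(3/\eta) + \beta n(1+\log(1/\beta))\cdot \log 2 - c\beta n/4\bigr)$, which is $\leq \exp(-\beta n)$ once $n \gtrsim d/\beta$ and $C$ is large enough. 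Then \cref{l:netMultLowerBound} together with the upper bound from the previous paragraph produces
\[ \sigma_{\min}(G_B) \geq \sqrt{e^{-c}\beta n} - \eta \cdot O\bigl(\sqrt{\beta n(1+\log(1/\beta))}\bigr), \]
and choosing $\eta$ to be a small enough polynomial in $\beta$ makes the second term a small fraction of the first, giving $\sigma_{\min}(G_B)^2 \gtrsim \beta^{O(1)} \cdot \beta n$, i.e. $\alpha_{\min} \gtrsim nd \cdot \beta^{O(1)}$.

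The main obstacle is precisely the lower tail: one must engineer a tail bound sharp enough to survive the $\exp(\beta n \log(1/\beta))$ subset-union cost when $\beta$ is small, which is why the elementary Vershynin bound cannot be used directly and the sharper chi-squared estimate combined with the net argument is essential. Everything else, including the choice of net resolution and the conversion between $\sigma$-bounds and the pseudorandomness normalization, is routine bookkeeping once this core step is in place.
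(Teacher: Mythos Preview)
Your proposal is correct and follows essentially the same approach as the paper: Gaussian concentration with $\theta \asymp \sqrt{\beta n(1+\log(1/\beta))}$ for the upper tail, the chi-squared lower tail \cref{l:chiMultLowerTail} with $c \asymp 1+\log(1/\beta)$ combined with an $\eta$-net (with $\eta$ a small polynomial in $\beta$) and \cref{l:netMultLowerBound} for the lower tail, followed by a union bound over the $\binom{n}{\beta n}$ subsets. The only cosmetic difference is that the paper packages the per-subset argument into a separate lemma (\cref{l:GaussiansubsetSpectralBounds}) with a free parameter $c$, and then specializes to $c = 8(2-\log\beta)$ in the union bound, whereas you carry the choice of $c$ through directly.
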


    We first prove spectral upper and lower bounds for a fixed $B \subseteq [n]$, and then show pseudorandomness by a union bound over all subsets. 

    \begin{lemma} \label{l:GaussiansubsetSpectralBounds}
        Let $G \in \R^{d \times m}$ be a random matrix standard Gaussian entries $G_{ij} \sim N(0,1)$. If $m \gtrsim d$ is large enough, for any $c \geq 4$, 
        \[  \frac{e^{-c}}{9} m \cdot I_{d} \preceq G G^{T} \preceq (2 + c) m \cdot I_{d}   \]
        with probability $\geq 1 - 2 \exp( - c m / 8)$. 
    \end{lemma}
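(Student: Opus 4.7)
The plan is to prove the upper bound via the standard Gaussian singular value concentration \cref{t:GaussianConcentration} and the lower bound via an $\eta$-net argument combined with the chi-squared lower tail of \cref{l:chiMultLowerTail}. For the upper bound I would apply \cref{t:GaussianConcentration} with $\theta \asymp \sqrt{cm}$: using $d \leq m$, this yields $\sigma_{\max}(G) \leq \sqrt{m} + \sqrt{d} + \theta \lesssim \sqrt{(c+2)m}$, hence $GG^T \preceq (c+2) m \cdot I_d$, with failure probability at most $2\exp(-\Omega(cm)) \leq \exp(-cm/8)$ for $c \geq 4$ (with any slack absorbed into the constants of the statement).

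For the lower bound, the direct Gaussian concentration is insufficient, because its lower tail becomes trivial once $\theta \geq \sqrt{m}$; the best failure probability it can yield is $\exp(-\Theta(m))$, independent of $c$. Instead I would build an $\eta$-net $\mathcal{N} \subseteq S^{d-1}$ with $\eta := c_0 e^{-c/2}/\sqrt{c+2}$ for a small absolute constant $c_0$; by \cref{f:netBound} this has cardinality $|\mathcal{N}| \leq \exp\bigl(O(d(c + \log(c+2)))\bigr)$. For each fixed $x \in \mathcal{N}$, the row vector $x^T G \in \R^m$ is a standard Gaussian in $\R^m$ (since $\|x\|_2 = 1$ and the rows of $G$ are independent standard Gaussians), so $\|x^T G\|_2^2$ is chi-squared with $m$ degrees of freedom and \cref{l:chiMultLowerTail} (which applies since $c \geq 4 \geq 2$) gives $\Pr[\|x^T G\|_2^2 \leq e^{-c} m] \leq \exp(-cm/4)$ per point. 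A union bound over $\mathcal{N}$ then gives $\inf_{x \in \mathcal{N}} \|x^T G\|_2 \geq e^{-c/2}\sqrt{m}$ with failure probability $\exp(O(dc) - cm/4)$, after which \cref{l:netMultLowerBound}, combined with the upper bound on $\sigma_{\max}(G)$ from the previous step, yields
\[ \sigma_{\min}(G) \geq e^{-c/2}\sqrt{m} - \eta\, \sigma_{\max}(G) \geq \tfrac{1}{3} e^{-c/2}\sqrt{m}, \]
which upon squaring gives $GG^T \succeq \tfrac{e^{-c}}{9} m \cdot I_d$. A final union bound over the upper and lower tail events closes the argument.

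The main obstacle is balancing the net complexity against the per-point failure probability: the target singular value $e^{-c/2}\sqrt{m}$ shrinks exponentially in $c$, which forces $\eta \asymp e^{-c/2}$ and hence $\log |\mathcal{N}| = \Theta(dc)$. This would be hopeless with any polynomial concentration bound, but the chi-squared lower tail of \cref{l:chiMultLowerTail} is itself exponential in $c$, contributing a factor $\exp(-cm/4)$. The $c$-dependence therefore cancels and the union bound constraint $\log |\mathcal{N}| \lesssim cm/8$ reduces to the $c$-free condition $m \gtrsim d$, matching the hypothesis of the lemma. Handling this cancellation carefully---in particular, using the sharp chi-squared bound rather than a generic sub-Gaussian tail---is what allows the statement to hold uniformly over all $c \geq 4$.
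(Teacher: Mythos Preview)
Your proposal is correct and matches the paper's proof essentially step for step: the upper bound via \cref{t:GaussianConcentration} with $\theta = \sqrt{cm}$, the lower bound via an $\eta$-net with $\eta \asymp e^{-c/2}/\sqrt{c+2}$ combined with the chi-squared tail \cref{l:chiMultLowerTail}, and the same cancellation of the $c$-dependence in the union bound reducing to $m \gtrsim d$. Even your choice of $c_0$ (the paper takes $c_0 = 2/3$) and the resulting constant $1/3$ in the $\sigma_{\min}$ bound coincide.
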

    \begin{proof}
    Applying \cref{t:GaussianConcentration} with $\theta = \sqrt{c m}$ for some $c \geq 4$ chosen later, we have
    \begin{align} \label{eq:GSubsetUB} 
    \sigma_{\max}(G) \leq \sqrt{m} + \sqrt{d} + \theta 
    = \sqrt{m} (1 + \sqrt{d/m} + \sqrt{c} ) 
    \leq \sqrt{m ( 2 + c) } ,  
    \end{align}
    % \[ \sigma_{\max}(G_{B}) \leq \sqrt{\beta n} + \sqrt{d} + \theta \leq \sqrt{\beta n} (1 + \sqrt{d/\beta n} + \sqrt{c} ) \leq \sqrt{\beta n ( 1 + 2 c) } ,  \]
    with failure probability $\leq \exp( - c m )$, where in the last step we used $m \gtrsim d$ and $c \geq 4$. 

    Next, we want to apply a net argument to lower bound $\sigma_{\min}$. For a fixed $x \in S^{d-1}$, we can apply \cref{l:chiMultLowerTail} to show 
    \begin{align} \label{eq:GSubsetLB}
    Pr[ \|x^{T} G\|_{2}^{2} \leq e^{-c} m ] \leq \exp( - c m / 4) ,
    \end{align}
    where we used that $x^{T} G$ is distributed according to $N(0,I_{m})$ by orthogonal invariance of $G$ and the fact that $x \in S^{d-1}$. 
    Now let $\mathcal{N} \subseteq S^{d-1}$ be an $\eta$-net with $\eta := \frac{2}{3} \sqrt{e^{-c}/(2+c)}$ for the same $c$ as above. 
    By \cref{f:netBound} we can bound the cardinality as
    \[ |\mathcal{N}| \leq (1 + 2/\eta)^{d} = (1 + 3 \sqrt{e^{c}(2+c)})^{d} \leq e^{c d} ,     \]
    where in the last step we use the assumption $c \geq 4$. 
    Applying the union bound, we have that the lower bound in \cref{eq:GSubsetLB} holds simultaneously for all elements of $\mathcal{N}$ with failure probability at most
    \[ |\mathcal{N}| \exp( - c m / 4) \leq \exp( cd - c m / 4) \leq \exp(- c m / 8 ) ,    \]
    where we used the cardinality bound for $|\mathcal{N}|$, and in the last step we used the assumption $m \gtrsim d$.
    
    Finally, we can lower bound $\sigma_{\min}$ by approximation:
    \[ \sigma_{\min}(G) \geq \min_{x \in \mathcal{N}} \|x^{T} G\|_{2} - \eta \|G\|_{\op} \geq \sqrt{ e^{-c} m} - \frac{2}{3} \sqrt{e^{-c}/(2+c)} \sqrt{ m (2 + c)} = \frac{1}{3} \sqrt{e^{-c} m} ,    \]
    where the first step was by \cref{l:netMultLowerBound}, and in the second step we used the lower bound over the net from \cref{eq:GSubsetLB} and the upper bound for $\|G\|_{\op}$ from \cref{eq:GSubsetUB}, as well as our choice of $\eta = \frac{2}{3} \sqrt{e^{-c} (2+c)}$. 
    The spectral bounds for $G G^{T}$ follow from these singular value bounds, and the failure probability can be bounded by a union bound for the upper and lower bounds.  
        \end{proof}

        We can now prove the pseudorandom property for Gaussian frames. 

    \begin{proof} [Proof of \cref{p:GaussianPseudoR}]
    We want to apply the above to every subset $B \subseteq [n]$ with $|B| = \beta n$. Therefore we need a union bound over ${n \choose \beta n} \leq 2^{\beta n (1 - \log_{2} \beta)}$ sets (\cref{f:setBound}).
    Fixing $B \subseteq [n]$ with $|B| = \beta n$ and applying \cref{l:GaussiansubsetSpectralBounds} with $c = 8(2 - \log \beta)$ gives
    \[ \frac{(\beta/e^{2})^{8}}{9} \frac{\beta n d}{d} \cdot I_{d} = \frac{e^{-c}}{9} |B| \cdot I_{d} \preceq G_{B} G_{B}^{T} \preceq (2+c) |B| \cdot I_{d} = (18 + 8 \log(1/\beta)) \frac{\beta n d}{d} \cdot I_{d}  .   \]
    By union bound, we get spectral bounds for all subsets $|B| = \beta n$ with total failure probability
    \[ \leq 2^{\beta n (1 - \log_{2} \beta)} \cdot 2 \exp( - c \beta n / 8 ) \leq \exp( - \beta n ) ,  \] 
    as $c = 8(2 - \log \beta)$. 
    By \cref{d:framePseudo}, this implies $(\alpha_{\min}, \alpha_{\max}, \beta)$-pseudorandomness with
    \[ \alpha_{\min} \gtrsim nd \cdot \beta^{8}, \qquad \alpha_{\max} \lesssim nd (1 + \log_{2} (1/\beta)) .   \]
        \end{proof}

\section{Proof of $\infty$-expansion implies Quantum Expansion} \label{app:InftyImpliesQuantumExpansion}

    In this section, we prove \cref{t:InftyImpliesQ} showing $\infty$-expansion implies quantum expansion.
    We will use the following simple consequence of the pseudorandom property. 

    \begin{fact} \label{f:pseudoBiggerPseudo}
    If $V \in \R^{d \times n}$ is $(\alpha_{\min}, \alpha_{\max}, \beta)$-pseudorandom according to \cref{d:framePseudo}, then for all subspaces $A \subseteq \R^{d}$ and subsets $B \subseteq [n]$ with $|B| \geq \beta n$, 
    \[ \alpha_{\min} \frac{\dim(A)}{d} \frac{|B|}{n} \leq \|P_{A} V_{B}\|_{F}^{2} = \langle P_{A}, V_{B} V_{B}^{T} \rangle \leq \alpha_{\max} \frac{\dim(A)}{d} \frac{|B|}{n}   \]
    % $\beta$-pseudorandomness implies $\beta'$-pseudorandomness for $\beta' \geq \beta$. 
    \end{fact}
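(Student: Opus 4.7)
The statement asks for spectral-type bounds on $V_B V_B^T$ tested against a projection $P_A$, for subsets $B$ of size at least $\beta n$ (not exactly $\beta n$). My plan is to prove the following stronger spectral inequality first:
\[ \alpha_{\min} \frac{|B|}{nd} I_d \;\preceq\; V_B V_B^T \;\preceq\; \alpha_{\max} \frac{|B|}{nd} I_d, \]
from which the stated bound follows immediately by taking the trace against $P_A$ and using $\langle P_A, I_d \rangle = \dim(A)$, together with the identity $\|P_A V_B\|_F^2 = \langle P_A, V_B V_B^T \rangle$.

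The key step is a straightforward averaging argument. Let $B' \subseteq B$ be a uniformly random subset of size $\beta n$. Each $j \in B$ lies in $B'$ with probability $\beta n / |B|$, so
\[ \E_{B'}[V_{B'} V_{B'}^T] \;=\; \frac{\beta n}{|B|} \sum_{j \in B} v_j v_j^T \;=\; \frac{\beta n}{|B|} V_B V_B^T. \]
By the pseudorandom hypothesis applied to \emph{every} realization of $B'$, the matrix $V_{B'} V_{B'}^T$ is sandwiched between $\beta \alpha_{\min}/d \cdot I_d$ and $\beta \alpha_{\max}/d \cdot I_d$ in the PSD order. Since PSD sandwich inequalities are preserved under expectation, rearranging the factor $\beta n/|B|$ gives the desired spectral bounds on $V_B V_B^T$.

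The main obstacle is essentially nonexistent, beyond the minor point that one should assume $\beta n$ is an integer dividing $|B|$ appropriately; as already noted in the remark accompanying the definition of pseudorandomness in the appendix, the fractional-size case differs only in lower-order terms and can be handled by a convex combination over two integer sizes. Thus the only real content is the observation that averaging random subsets of the fixed size $\beta n$ reconstructs the full sum $V_B V_B^T$, upgrading the bound from "size exactly $\beta n$" to "size at least $\beta n$" with the correct scaling factor $|B|/n$.
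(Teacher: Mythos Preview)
Your proposal is correct and is essentially identical to the paper's own proof: both establish the spectral sandwich $\alpha_{\min}\frac{|B|}{nd} I_d \preceq V_B V_B^T \preceq \alpha_{\max}\frac{|B|}{nd} I_d$ by writing $V_B V_B^T$ as an average over uniformly random subsets $T \subseteq B$ of size $\beta n$, applying the pseudorandom hypothesis to each $T$, and then tracing against $P_A$.
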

    \begin{proof}
        This follows by a simple averaging: for $|B| \geq \beta n$,  
        \[ \frac{1}{|B|} V_{B} V_{B}^{T} = \E_{T} \frac{1}{|T|} V_{T} V_{T}^{T}   \]
        where $T \subseteq B$ is a uniformly random subset of size $|T| = \beta n$. Using \cref{d:framePseudo} of pseudorandomness for these smaller set and rearranging gives:
        \[ \alpha_{\min} \frac{|B|}{nd} I_{d} = \frac{|B|}{\beta n} \frac{\alpha_{\min} \beta}{d} I_{d} \preceq V_{B} V_{B}^{T} \preceq \frac{|B|}{\beta n} \frac{\alpha_{\max} \beta}{d} I_{d} = \alpha_{\max} \frac{|B|}{n d} I_{d} .  \]
        The bounds for inner product $\langle P_{A}, V_{B} V_{B}^{T} \rangle$ follow directly from these spectral bounds. 
    \end{proof}

    We can now show that $\infty$-expansion implies quantum expansion. Our strategy will be to follow the Cheeger's style argument of \cite{FM20}. 

    \begin{theorem} \label{t:InftyImpliesQR}
        For doubly balanced frame $V \in \R^{d \times n}$, if $V$ satisfies $(1-\lambda)$-$\infty$-expansion according to \cref{d:frameInftyExpansion}, then it satisfies $(1-\Omega(\lambda^{2}))$-quantum expansion according to \cref{d:frameQuantumExpansion}. 
    \end{theorem}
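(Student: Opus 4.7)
The plan is to mimic the reverse direction of Cheeger's inequality: identify quantum expansion as the spectral gap of a symmetric doubly stochastic matrix, and $\infty$-expansion (via pseudorandomness) as providing a conductance lower bound for that same matrix. Without loss of generality, normalize so that $s(V) = n$, which makes $\|v_{j}\|_{2}^{2} = 1$ and $V V^{T} = (n/d) I_{d}$. I would then introduce the Gram-squared matrix $K \in \R^{n \times n}$ with $K_{jk} := (v_{j}^{T} v_{k})^{2}$, so that $\|T y\|_{F}^{2} = y^{T} K y$ where $T(y) := \sum_{j} y_{j} v_{j} v_{j}^{T}$. A direct calculation using $VV^{T} = (n/d) I_{d}$ gives $K 1_{n} = (n/d) 1_{n}$, so the rescaled matrix $\tilde K := (d/n) K$ is symmetric and doubly stochastic with top eigenvector $1_{n}$. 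Consequently
\[ \sup_{y \perp 1_{n},\, \|y\|_{2} \leq 1} \|T y\|_{F}^{2} \;=\; \frac{n}{d} \cdot \lambda_{2}(\tilde K), \]
so proving $(1 - \Omega(\lambda^{2}))$-quantum expansion reduces to establishing a spectral gap $1 - \lambda_{2}(\tilde K) \gtrsim \lambda^{2}$.

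For the spectral gap I would apply the standard Cheeger inequality for symmetric doubly stochastic matrices, $1 - \lambda_{2}(\tilde K) \geq \phi^{2}/2$, and aim to prove the conductance bound $\phi \gtrsim \lambda$. Since $V$ is doubly balanced (i.e.\ $\eps = 0$) and satisfies $(1 - \lambda)$-$\infty$-expansion, \cref{l:pseudoInftyRelation} yields $(\alpha_{\min}, \cdot, 1/2)$-pseudorandomness with $\alpha_{\min} \geq n \lambda$, and \cref{f:pseudoBiggerPseudo} then gives the spectral lower bound $V_{B} V_{B}^{T} \succeq (\alpha_{\min} |B|/(nd)) I_{d}$ for every $|B| \geq n/2$. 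For any $|S| \leq n/2$ (so $|S^{c}| \geq n/2$), the cut in $\tilde K$ is
\[ \sum_{j \in S,\, k \in S^{c}} \tilde K_{jk} \;=\; \frac{d}{n} \langle V_{S} V_{S}^{T}, V_{S^{c}} V_{S^{c}}^{T} \rangle \;\geq\; \frac{d}{n} \cdot \frac{\alpha_{\min} |S^{c}|}{nd} \cdot \tr(V_{S} V_{S}^{T}) \;=\; \frac{\alpha_{\min} |S|\, |S^{c}|}{n^{2}}, \]
using the spectral lower bound on $V_{S^{c}} V_{S^{c}}^{T}$ together with $\tr(V_{S} V_{S}^{T}) = |S|$. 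Dividing by $|S|$ and using $|S^{c}| \geq n/2$ and $\alpha_{\min} \geq n \lambda$ gives $\phi(S) \geq \lambda/2$ for every such $S$, hence $\phi \geq \lambda/2$. Combining everything produces $\lambda_{2}(\tilde K) \leq 1 - \lambda^{2}/8$, so $\|T y\|_{F} \leq \sqrt{n/d}\,\sqrt{1 - \lambda^{2}/8} \leq \sqrt{n/d}\,(1 - \lambda^{2}/16)$, which is $(1 - \Omega(\lambda^{2}))$-quantum expansion.

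I do not anticipate a serious technical obstacle in executing this plan. The only nontrivial conceptual step is recognizing that $K = T^{*} T$, after rescaling by $d/n$, becomes a symmetric doubly stochastic matrix whose second eigenvalue \emph{is} (up to the $n/d$ factor) the squared quantum-expansion constant; once this identification is made, the classical reverse-Cheeger argument transfers cleanly, with the $\infty$-expansion condition playing the role of an $\ell_{\infty}$/cut expansion bound via the pseudorandom characterization of \cref{l:pseudoInftyRelation}.
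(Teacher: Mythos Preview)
Your proof is correct and takes a genuinely different route from the paper. The paper invokes the two-sided Cheeger inequality of \cite{FM20} (their Corollary~A.4), which bounds quantum expansion by a conductance $\mathrm{ch}(V)$ minimized over pairs of a subspace $A \subseteq \R^{d}$ and a subset $B \subseteq [n]$; it then lower bounds $\mathrm{ch}(V)$ via a three-case analysis on the relative sizes of $\dim(A)/d$ and $|B|/n$, each time using either pseudorandomness or the doubly balanced condition. Your argument instead observes that the quantum-expansion constant is \emph{exactly} the second eigenvalue of the $n \times n$ symmetric doubly stochastic matrix $\tilde K = (d/n)\bigl((v_{j}^{T}v_{k})^{2}\bigr)_{j,k}$, so the problem reduces to the \emph{classical} scalar Cheeger inequality on $[n]$ alone, with no subspace variable. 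The conductance of $\tilde K$ is then bounded in a single line using pseudorandomness on $S^{c}$. This is more elementary and self-contained (no need for the operator Cheeger machinery of \cite{FM20}), and it yields explicit constants. The paper's route, by contrast, stays closer to the operator-scaling framework and reuses the existing two-sided Cheeger result, which may generalize more directly to the operator scaling setting mentioned in \cref{s:FrameInfty}.
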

    \begin{proof}
        % We follow the Cheeger's style argument used in \cite{FM20} to show quantum expansion. Specifically, 
        Corollary A.4 in \cite{FM20} shows that doubly balanced $V$ satisfies $(1-\text{ch}(V)^{2})$-quantum expansion for 
        \[ \text{ch}(V) := \min_{A \subseteq \R^{d}, B \subseteq [n]} \frac{\|(I_{d} - P_{A}) V_{B}\|_{F}^{2} + \|P_{A} V_{\overline{B}}\|_{F}^{2} }{\|P_{A} V\|_{F}^{2} + \|V_{B}\|_{F}^{2}}  ,   \]
        where the minimum is over all subspaces $A \subseteq \R^{d}$ and subsets $B \subseteq [n]$ satisfying $\frac{\dim(A)}{d} + \frac{|B|}{n} \leq 1$.
        % $P_{A}$ denotes the orthogonal projection onto subspace $A$, $A^{\perp} \subseteq \R^{d}$ the orthogonal subspace, and $\overline{B} := [n] - B$. 

        Therefore the result follows if we can show $(1-\lambda)$-$\infty$-expansion implies $\text{ch}(V) \gtrsim \lambda$. 
        Fixing subspace $A \subseteq \R^{d}$ and subset $B \subseteq [n]$, let $a := \frac{\dim(A)}{d}, b := \frac{|B|}{n}$ and assume $a+b \leq 1$. 
        The goal is to show 
        \begin{align} \label{eq:cheegerCondition} 
        \langle I_{d} - P_{A}, V_{B} V_{B}^{T} \rangle + \langle P_{A}, V_{\overline{B}} V_{\overline{B}}^{T} \rangle \gtrsim s(V) \lambda (a+b)  .  
        \end{align}

        Note that $\infty$-expansion and quantum expansion are both homogeneous conditions, so we assume $s(V) = 1$ for simplicity. 
        % For $V$ of size $s(V) = 1$ which satisfies $(1-\lambda)$-$\infty$-expansion, \alpha_{\max} \leq 2 - \lambda, 
        By \cref{l:pseudoInftyRelation}, $(1-\lambda)$-$\infty$-expansion implies that $V$ is $(\alpha_{\min} \geq \lambda, \frac{1}{2})$-pseudorandom. By \cref{f:pseudoBiggerPseudo}, this implies, for all subspaces $A \subseteq \R^{d}$ and subsets $|B| \geq n/2$, 
        \begin{align} \label{eq:InftyImpliesPseudo} 
         \langle P_{A}, V_{B} V_{B}^{T} \rangle \geq \alpha_{\min} \frac{\dim(A)}{d} \frac{|B|}{n}
         \geq \lambda \frac{\dim(A)}{d} \frac{|B|}{n} .
        % \preceq \alpha_{\max} \frac{\dim(A)}{d} \frac{|B|}{n} \preceq ( 2 - \lambda ) \frac{\dim(A)}{d} \frac{|B|}{n} .  
        \end{align}
        We can use this to lower bound \cref{eq:cheegerCondition} by some case analysis:
        
        \begin{itemize}
            \item If $b \geq 1/2$: we use the pseudorandom condition for subset $B$ to lower bound
            \[ \| (I_{d} - P_{A}) V_{B}\|_{F}^{2} \geq \alpha_{\min} (1-a) b \geq \lambda \frac{(1-a) b}{a+b} (a+b) \geq \frac{\lambda}{4} (a+b)  ,    \]
            where the first step is by pseudorandomness \cref{eq:InftyImpliesPseudo} for $b \geq 1/2$, and in the final step we use the assumptions $a+b \leq 1, b \geq 1/2$ so $1-a \geq 1/2$ and $b \geq a$. 
            
            \item If $b \leq 1/2$ and $b \leq 2a$: we use the pseudorandom property for complement subset $\overline{B}$
            \[ \|P_{A} V_{\overline{B}} \|_{F}^{2} \geq \alpha_{\min} a (1-b) \geq \lambda \frac{a (1-b)}{a+b} (a+b) \geq \frac{\lambda}{6} (a+b) ,      \]
            where in the last step we used our case assumptions to bound $1-b \geq 1/2$ and $6 a \geq a+b$. 

            \item Finally, if $b \leq 1/2$ and $b \geq 2a$: we use the doubly balanced property to lower bound
            \[ \|(I_{d} - P_{A}) V_{B} \|_{F}^{2} \geq \|(I_{d} - P_{A}) V\|_{F}^{2} - \|V_{\overline{B}}\|_{F}^{2} \geq (1-a) - (1-b) = \frac{b-a}{a+b} (a+b) \geq \frac{a+b}{3}  ,     \]
            where in the first step we used $I_{d} - P_{A}$ is a projection so $\|(I_{d} - P_{A}) V_{\overline{B}}\|_{F} \leq \|V_{\overline{B}}\|_{F}$, the second step was by the doubly balanced condition, and in the final step we used $b \geq 2a$ so $\frac{b-a}{a+b} \geq \frac{1}{3}$.
        \end{itemize}
        Combining the above three cases gives \cref{eq:cheegerCondition} for arbitrary $A,B$ with $a+b \leq 1$. This implies $\ch(V) \geq \frac{\lambda}{6}$ according to the definition, which proves quantum expansion via Corollary A.4 of \cite{FM20} as stated above. 
    \end{proof}
\end{document}